\documentclass[12pt,twoside,american,english,british]{article}
\usepackage[T1]{fontenc}
\usepackage[utf8]{inputenc}
\usepackage[a4paper]{geometry}
\usepackage{color}
\usepackage{enumitem}
\usepackage{dsfont}
\usepackage{amsmath}
\usepackage{amsthm}
\usepackage{amssymb}
\usepackage{setspace}
\usepackage{esint}

\makeatletter
\numberwithin{equation}{section}
\numberwithin{figure}{section}
\theoremstyle{plain}
\newtheorem{thm}{\protect\theoremname}[section]
\theoremstyle{definition}
\newtheorem{defn}[thm]{\protect\definitionname}
\theoremstyle{plain}
\newtheorem{lem}[thm]{\protect\lemmaname}
\theoremstyle{plain}
\newtheorem{prop}[thm]{\protect\propositionname}
\theoremstyle{plain}
\newtheorem{cor}[thm]{\protect\corollaryname}
\newcommand{\lyxaddress}[1]{
	\par {\raggedright #1
	\vspace{1.4em}
	\noindent\par}
}

\usepackage[T1]{fontenc}
\usepackage[utf8]{inputenc}
\usepackage[a4paper]{geometry}
\usepackage{color}
\usepackage{dsfont}
\usepackage{amsmath}
\usepackage{amsthm}
\usepackage{amssymb}
\usepackage{setspace}
\usepackage{esint}

\makeatletter
\numberwithin{equation}{section}
\numberwithin{figure}{section}
\theoremstyle{plain}

\@ifundefined{date}{}{\date{}}
\usepackage{babel}
\usepackage{latexsym}
\usepackage{amsthm}
\usepackage{esint}
\usepackage{eucal}
\usepackage{epstopdf}
\usepackage{graphicx}
\usepackage{bigints}
\theoremstyle{plain}

\newtheoremstyle{boldremark}
    {\dimexpr\topsep/2\relax} % space above
    {\dimexpr\topsep/2\relax} % space below
    {}          % body font
    {}          % indent amount
    {\bfseries} % theorem head font
    {.}         % punctuation after theorem head
    {.5em}      % space after theorem head
    {}          % theorem hed spec. (empty = "normal")

\theoremstyle{boldremark}
\newtheorem{brem} [thm] {Remark} % remarks are numbered within sections

\usepackage{fancyhdr}
\usepackage{blindtext}
\usepackage{titlesec}

\titleformat{name=\chapter}[display]
{\normalfont\Huge\itshape}
{\titlerule[1pt]\vspace{-33pt}\filleft%
  \parbox[t]{6em}{%
    \raggedleft%
    \rule{\linewidth}{0.5ex}\newline%
    \chaptertitlename\ \thechapter%
  }%
}
{4pc}
{\normalfont\upshape\bfseries\Huge}
\allowdisplaybreaks

\makeatother

\usepackage{babel}
\addto\captionsbritish{\renewcommand{\definitionname}{Definition}}
\addto\captionsbritish{\renewcommand{\lemmaname}{Lemma}}
\addto\captionsbritish{\renewcommand{\theoremname}{Theorem}}
\addto\captionsenglish{\renewcommand{\definitionname}{Definition}}
\addto\captionsenglish{\renewcommand{\lemmaname}{Lemma}}
\addto\captionsenglish{\renewcommand{\theoremname}{Theorem}}
\providecommand{\definitionname}{Definition}
\providecommand{\lemmaname}{Lemma}
\providecommand{\theoremname}{Theorem}

\usepackage[colorlinks,pdfpagelabels,pdfstartview = FitH,bookmarksopen
= true,bookmarksnumbered = true,linkcolor = blue,plainpages =
false,hypertexnames = false,citecolor = blue,pagebackref=false,urlcolor=blue]{hyperref}

\usepackage{amsmath}  % for \iint macro
\usepackage{graphicx} % for \rotatebox macro
\def\Yint#1{\mathchoice
    {\YYint\displaystyle\textstyle{#1}}%
    {\YYint\textstyle\scriptstyle{#1}}%
    {\YYint\scriptstyle\scriptscriptstyle{#1}}%
    {\YYint\scriptscriptstyle\scriptscriptstyle{#1}}%
      \!\iint}
\def\YYint#1#2#3{{\setbox0=\hbox{$#1{#2#3}{\iint}$}
    \vcenter{\hbox{$#2#3$}}\kern-.51\wd0}}
\def\longdash{{-}\mkern-3.5mu{-}} 
\def\tiltlongdash{\rotatebox[origin=c]{15}{$\longdash$}}

\def\tiltfiint{\Yint\tiltlongdash}

\usepackage{etoolbox} % va messo nel preambolo
\makeatletter
\pretocmd{\@makefntext}{\setlength\parindent{0pt}\noindent}{}{%
  \PackageWarning{MyDoc}{Patch of \string\@makefntext\ failed}%
}
\makeatother

\makeatother

\usepackage{babel}
\addto\captionsamerican{\renewcommand{\corollaryname}{Corollary}}
\addto\captionsamerican{\renewcommand{\definitionname}{Definition}}
\addto\captionsamerican{\renewcommand{\lemmaname}{Lemma}}
\addto\captionsamerican{\renewcommand{\propositionname}{Proposition}}
\addto\captionsamerican{\renewcommand{\theoremname}{Theorem}}
\addto\captionsbritish{\renewcommand{\corollaryname}{Corollary}}
\addto\captionsbritish{\renewcommand{\definitionname}{Definition}}
\addto\captionsbritish{\renewcommand{\lemmaname}{Lemma}}
\addto\captionsbritish{\renewcommand{\propositionname}{Proposition}}
\addto\captionsbritish{\renewcommand{\theoremname}{Theorem}}
\addto\captionsenglish{\renewcommand{\corollaryname}{Corollary}}
\addto\captionsenglish{\renewcommand{\definitionname}{Definition}}
\addto\captionsenglish{\renewcommand{\lemmaname}{Lemma}}
\addto\captionsenglish{\renewcommand{\propositionname}{Proposition}}
\addto\captionsenglish{\renewcommand{\theoremname}{Theorem}}
\providecommand{\corollaryname}{Corollary}
\providecommand{\definitionname}{Definition}
\providecommand{\lemmaname}{Lemma}
\providecommand{\propositionname}{Proposition}
\providecommand{\theoremname}{Theorem}

\begin{document}
\title{\textbf{Widely degenerate anisotropic diffusion:\\local boundedness
and semicontinuity}}
\author{Pasquale Ambrosio, Simone Ciani, Giovanni Cupini}
\date{}
\maketitle
\begin{abstract}
\begin{singlespace}
\noindent We investigate the regularity of local weak solutions to
evolution equations of the form 
\[
\partial_{t}u\,=\,\sum_{i=1}^{n}\,\partial_{x_{i}}\left[a_{i}(x,t)\,(\vert\partial_{x_{i}}u\vert-\delta_{i})_{+}^{p_{i}-1}\,\frac{\partial_{x_{i}}u}{\vert\partial_{x_{i}}u\vert}\right]\,\,\,\,\,\,\,\,\,\,\mathrm{in}\,\,\,\Omega_{T}\,=\,\Omega\times(0,T)\,,
\]
where $\Omega$ is a bounded domain in $\mathbb{R}^{n}$ with $n\geq2$,
the coefficients $a_{i}$ are measurable and bounded, $p_{i}>1$ and
$\delta_{i}\geq0$ are fixed parameters. Under suitable assumptions
on the exponents $p_{i}$, we first show that the local boundedness
of weak solutions follows from their membership in an appropriate
non-homogeneous parabolic De Giorgi class. We then establish the existence
of semicontinuous representatives for local weak sub(super)-solutions.
Our analysis extends analogous results available for less degenerate
operators and generalizes the local boundedness results obtained in
\cite{AmbCiani} to fully anisotropic, widely degenerate parabolic
PDEs with non-smooth coefficients depending additionally on the space-time
variables $(x,t)$, whose growth is governed by a family of exponents
$p_{i}$ rather than by a single exponent.\vspace{0.2cm}
\end{singlespace}
\end{abstract}
\noindent \textbf{Mathematics Subject Classification:} 35B45, 35B65,
35K10, 35K65, 35K92.

\noindent \textbf{Keywords:} Degenerate parabolic equations; anisotropic
equations; local boundedness; semicontinuity.
\selectlanguage{english}%
\begin{singlespace}

\section{Introduction}
\end{singlespace}

\selectlanguage{british}%
\begin{singlespace}
\noindent $\hspace*{1em}$Let $\Omega$ be a bounded domain in $\mathbb{R}^{n}$,
$n\geq2$, and let $T\in(0,\infty)$. We are concerned with local
regularity properties of weak sub(super)-solutions to the following
parabolic equation:
\begin{equation}
\partial_{t}u\,=\,\sum_{i=1}^{n}\,\partial_{x_{i}}\left[a_{i}(x,t)\,(\vert\partial_{x_{i}}u\vert-\delta_{i})_{+}^{p_{i}-1}\,\frac{\partial_{x_{i}}u}{\vert\partial_{x_{i}}u\vert}\right]\,\,\,\,\,\,\,\,\,\,\mathrm{in}\,\,\,\Omega_{T}=\Omega\times(0,T)\,,\label{eq:equation}
\end{equation}
where $p_{1},\ldots,p_{n}>1$, $\delta_{1},\ldots,\delta_{n}$ are
non-negative real numbers, and $\left(\,\cdot\,\right)_{+}$ stands
for the positive part. Here $a_{i}$,\textit{ $i\in\{1,\ldots,n\}$},
are measurable coefficients satisfying
\begin{equation}
\Lambda^{-1}\,\leq\,a_{i}(x,t)\,\leq\,\Lambda\,\,\,\,\,\,\,\,\,\,\mathrm{a.e.}\,\,\,\mathrm{in}\,\,\,\Omega_{T}\,,\label{eq:coeff}
\end{equation}
for some constant $\Lambda\geq1$.\\
$\hspace*{1em}$Evolution equations of a form similar to (\ref{eq:equation})
have been studied since the 1960s, especially by the Soviet school;
see, for instance, the paper \cite{Vishik} by Vishik. The above equation,
in the special case $p_{i}=p$, $\delta_{i}=0$ and $a_{i}=1$ for
all $i\in\{1,\ldots,n\}$, also appears explicitly in the monographs
\cite{Lions}, \cite[Example 4.A, Chap. III]{Show} and \cite[Example 30.8]{Zeid},
among others. More recently, equations of the type (\ref{eq:equation}),
with $p_{i}=p\geq2$ and $a_{i}=1$ for all $i\in\{1,\ldots,n\}$,
have been considered in the works \cite{AmbCiani} and \cite{AmbAnis}:
in the former, we have established the local boundedness of local
weak solutions, while in the latter we have proved that such solutions
are locally Lipschitz continuous in the spatial variable (see also
\cite{BBLVpar} for the case in which all $\delta_{i}$ are equal
to zero).\\
The equation considered in this paper combines, in a unified way,
two of the most studied classes of nonlinear equations in recent years:
the \textit{widely degenerate equations} and the \textit{anisotropic}
ones.\\
$\hspace*{1em}$Widely degenerate equations have attracted growing
attention in the literature; see, for instance, \cite{Amb00,Amb2,AmGrPa,BDGPell,Brasco,BraCarSan,CGHP,CoFi,CGGP,Grim,GriRus,Mons,Picc,Russo,Strunk-ell}
in the elliptic setting and \cite{Amb1,Amb3,AmbBau,AmbCuDe,AmbPass,BoDuGiPa,GenPas,Strunk}
in the parabolic context. In these latter works, the model equation
under consideration is the following:
\begin{equation}
\partial_{t}u-\mathrm{div}\left((\vert Du\vert-\lambda)_{+}^{p-1}\frac{Du}{\vert Du\vert}\right)=f\,,\,\,\,\,\,\,\,\,\mathrm{with}\,\,\lambda>0\,.\label{eq:AmbPass}
\end{equation}
According to the now standard terminology, this equation is widely
degenerate, in the sense that the diffusion part is uniformly elliptic
only outside a ball with radius $\lambda$, while it behaves asymptotically,
that is, for large values of $\vert Du\vert$, like the parabolic
$p$-Laplace operator. Therefore, equations of the form (\ref{eq:AmbPass})
fall within the class of \textit{asymptotically regular parabolic
problems} (for a comprehensive overview of this topic, see \cite{Amb1,AmbPass,BoDuGiPa}
and the references therein). As already pointed out in \cite{Amb3,BoDuGiPa},
no more than Lipschitz regularity can be expected for solutions to
equation (\ref{eq:AmbPass}). In fact, when $f=0$, any time-independent
$\lambda$-Lipschitz function solves (\ref{eq:AmbPass}), and even
more, it is a solution of the associated stationary equation.\\
Returning to our setting, equation (\ref{eq:equation}) exhibits a
similar type of degeneracy as (\ref{eq:AmbPass}), since the ellipticity
of the corresponding operator in divergence form breaks down on the
set 
\[
\bigcup_{i=1}^{n}\,\{\vert\partial_{x_{i}}u\vert\leq\delta_{i}\}\,.
\]
In particular, equation (\ref{eq:equation}) displays a threshold-type
behavior (possibly with different thresholds $\delta_{i}$), with
regions where the diffusion coefficients may vanish identically, thereby
resulting in a loss of the parabolic structure on sets of possibly
positive measure.\\
$\hspace*{1em}$Anisotropic equations and functionals were introduced
in the 1980s by Giaquinta \cite{Giaq} and Marcellini \cite{Marc}
and have since been extensively studied under many different aspects;
see, e.g., \cite{AmbCupMas,BiaCupMas2,BB1,BouBra,BouBraLeo,Cianchi,CiHeSk,CSV,CuMaMa1,CuMaMa2,CuMaMa3,CuMaMa4,FeoPasPos,FusSbo1,FusSbo2,GriRuss2,Russo2,Strof}
in the elliptic setting and \cite{BDM,BDMS,BoDuMin,CiaGuaVes,CHSS,CiaHenSkr,CiaMosVes,DeFil,TerTer}
in the parabolic framework. The term \textit{anisotropic} refers to
the fact that the diffusion is of power type with exponents that may
vary with the spatial directions. While a fairly well-developed regularity
theory is available for equations with differentiable coefficients
(see, for instance, \cite{BouBra} and \cite{EleMarMas}), the situation
is much less understood in the case of rough coefficients. In this
latter setting, although the $L^{\infty}$-estimates and the Critical
Mass Lemma, two key tools necessary to prove regularity (see (\ref{eq:sup2}),
(\ref{eq:estcor}) and Lemma \ref{lem:critmass} later on), can still
be adapted, the same does not apply for the so-called shrinking lemma
(see \cite[Lemma 7.2 Chap. III and Lemma 5.1 Chap. IV]{DiBe}). We
recall that the shrinking lemma, originally introduced by De Giorgi
\cite{DeGio}, is another fundamental ingredient needed to demonstrate
regularity. Roughly speaking, it ensures that, upon restriction to
smaller domains, if we denote by $\mu$ the infimum of the solution,
the measure of the set where the solution takes values between $\mu$
and $\mu+\varepsilon$ tends to zero as $\varepsilon\to0$ (see \cite{DBUV}
for further details). A partial breakthrough in this direction was
obtained by Liskevich and Skrypnik \cite{LisSkr}, who succeeded in
proving regularity in a very special case by replacing the shrinking
lemma with the positivity expansion method introduced in \cite{DBeGiVe}
in the parabolic framework. However, despite these recent advances,
the regularity theory remains highly fragmented. For example, Harnack's
estimates for the elliptic operators are currently available, to our
knowledge, only for operators with constant coefficients and with
exponents $p_{i}>2$ satisfying a suitable parabolic condition (see
\cite{CiaMosVes} for more details).\\
$\hspace*{1em}$As for widely degenerate anisotropic problems, while
in the elliptic setting they have already been studied from several
viewpoints (see, for instance, \cite{BouBraJul,BBLVsup,BraCar,BrCa2,BLPV}
and the references therein), in the parabolic context only a few regularity
results are currently available (we refer again to \cite{AmbAnis,AmbCiani}),
and many aspects still remain to be investigated.\\
$\hspace*{1em}$Before describing the aims and structure of this paper,
we also mention the work \cite{Ciani}, where the authors establish
several qualitative properties of weak solutions to doubly nonlinear
anisotropic evolution equations, whose prototype is 
\begin{equation}
\partial_{t}(\vert u\vert^{\alpha-1}\,u)-\sum_{i=1}^{n}\,\partial_{x_{i}}(\vert\partial_{x_{i}}u\vert^{p_{i}-2}\,\partial_{x_{i}}u)\,=\,0\,,\,\,\,\,\,\,\,\,\mathrm{with}\,\,\alpha>0\,.\label{eq:doublyNL}
\end{equation}
Among other things, they show that certain regularity properties,
such as local boundedness and the existence of semicontinuous representatives
for weak solutions, are actually embodied in suitable energy estimates
rather than in the mere class of solutions to the equation. However,
their framework does not cover our widely degenerate operator. This
is because, from the energetic viewpoint, equation (\ref{eq:equation})
exhibits features that are not captured by (\ref{eq:doublyNL}). Roughly
speaking, equation (\ref{eq:equation}) behaves as if a lower-order
term on the right-hand side of (\ref{eq:doublyNL}) were capable of
suppressing the diffusion when the relevant energy is below a certain
threshold; see, for instance, the bound for the energies $\mathcal{E}_{j}$
in the proof of Lemma \ref{lem:critmass} below.\\
$\hspace*{1em}$In this paper, we not only extend the aforementioned
regularity properties to local weak sub(super)-solutions of (\ref{eq:equation}),
but we also generalize the local boundedness results obtained in \cite{AmbCiani}
to fully anisotropic and widely degenerate operators, with non-smooth
coefficients depending also on the space-time variables $(x,t)$,
and whose growth is prescribed by a whole family of exponents $p_{i}>1$,
$i\in\{1,\ldots,n\}$, rather than by a single exponent $p\geq2$.\\
\\
\smallskip{}

\end{singlespace}
\begin{singlespace}

\subsection{Plan of the paper}
\end{singlespace}

\begin{singlespace}
\noindent $\hspace*{1em}$Concerning local weak sub(super)-solutions
to (\ref{eq:equation}), we carry out a detailed analysis of
\end{singlespace}
\begin{enumerate}
\begin{singlespace}
\item the weak formulation of the notion of sub(super)-solution via Steklov
averages;\vspace{-2mm}
\item energy estimates for sub(super)-solutions;\vspace{-2mm}
\item local boundedness and the existence of semicontinuous representatives
(for the whole energy classes defined at the end of Section \ref{sec:energy_estimate}).
\end{singlespace}
\end{enumerate}
\begin{singlespace}
\noindent In what follows, we introduce each one of these aspects.
After presenting the full variational definition of weak sub(super)-solution,
in Subsection \ref{subsec:SteklovAverages} we give a weak formulation
of sub(super)-solution based on the usual Steklov averaging technique.
This is then employed to show that local weak sub(super)-solutions
to (\ref{eq:equation}) are elements of special energy classes.
\end{singlespace}
\begin{singlespace}

\subsection*{Energy classes}
\end{singlespace}

\begin{singlespace}
\noindent $\hspace*{1em}$In this paper, we follow an approach dating
back to De Giorgi (see \cite{DeGio}) and subsequently widely developed
in the literature: we show that certain regularity properties are,
in general, encoded in suitable energy estimates rather than in the
mere class of local weak sub(super)-solutions to (\ref{eq:equation}).
More precisely, here we define two classes of functions $\mathcal{DG}^{+}(\mathbf{p},\{\delta_{i}\},\Omega_{T},\mathcal{C})$
and $\mathcal{DG}^{-}(\mathbf{p},\{\delta_{i}\},\Omega_{T},\mathcal{C})$
(see the end of Section \ref{sec:energy_estimate}), which we believe
to be of fundamental importance in the study of the local behavior
of local weak sub(super)-solutions. The inclusion of the latter in
the two aforementioned classes, explicitly observed in Remark \ref{def:ossDG}
below, allows us to state results of broader scope and application
within the regularity theory for widely degenerate and anisotropic
operators. Indeed, the results obtained here can be formulated directly
for elements of the (larger) classes $\mathcal{DG}^{\pm}(\mathbf{p},\{\delta_{i}\},\Omega_{T},\mathcal{C})$,
rather than merely in terms of local weak sub(super)-solutions to
(\ref{eq:equation}).\\
\smallskip{}

\end{singlespace}
\begin{singlespace}

\subsection*{Local boundedness and semicontinuity}
\end{singlespace}

\begin{singlespace}
\noindent $\hspace*{1em}$Regarding the boundedness of local weak
sub(super)-solutions to (\ref{eq:equation}), we are interested in
the parameter range 
\begin{equation}
1\,<\,p_{i}\,<\,\overline{p}\left(1+\frac{2}{n}\right),\,\,\,\,\,\,\,\,\,\,\,\,\,\overline{p}\,<\,n\,,\label{eq:range}
\end{equation}
where\foreignlanguage{english}{
\[
\overline{p}\,:=\left(\frac{1}{n}\,\sum_{i=1}^{n}\frac{1}{p_{i}}\right)^{-1}.
\]
}

\noindent More generally, we prove that, under certain conditions
involving the exponents $p_{i}$, the functions in the energy class
$\mathcal{DG}^{+}(\mathbf{p},\{\delta_{i}\},\Omega_{T},\mathcal{C})$
(resp. $\mathcal{DG}^{-}(\mathbf{p},\{\delta_{i}\},\Omega_{T},\mathcal{C})$)
are locally essentially bounded from above (resp. below) in $\Omega_{T}$.
Hence, by the inclusion of local weak solutions of (\ref{eq:equation})
in the classes $\mathcal{DG}^{\pm}(\mathbf{p},\{\delta_{i}\},\Omega_{T},\mathcal{C})$,
we obtain the local boundedness for weak solutions.\\
$\hspace*{1em}$In our proofs, we need to distinguish between the
cases $\overline{p}>\frac{2n}{n+2}$ and $\overline{p}\leq\frac{2n}{n+2}$.
In the latter case, we require the extra integrability condition 
\begin{equation}
u\,\in\,L_{loc}^{m}(\Omega_{T})\,,\,\,\,\,\mathrm{for\,\,some\,\,}m\,>\,\frac{n}{\overline{p}}\,(2-\overline{p})\,,\label{eq:extra_integr}
\end{equation}
similarly to subcritical $p$-Laplacian equations (see, for instance,
\cite{DiBe}). We note that the two ranges for $\overline{p}$ are
identical to those appearing in \cite{Yu-Lian}, and also the extra
integrability condition (\ref{eq:extra_integr}) coincides with the
integrability assumption used in \cite{Yu-Lian}.\\
$\hspace*{1em}$Furthermore, we show that local weak sub(super)-solutions
to (\ref{eq:equation}) admit an upper (lower) semicontinuous representative
in $\Omega_{T}$. In fact, the same result holds more generally for
elements of the classes $\mathcal{DG}^{+}(\mathbf{p},\{\delta_{i}\},\Omega_{T},\mathcal{C})$
and $\mathcal{DG}^{-}(\mathbf{p},\{\delta_{i}\},\Omega_{T},\mathcal{C})$,
respectively, under appropriate assumptions on the exponents $p_{i}$,
and additionally assuming that condition (\ref{eq:extra_integr})
holds in the case $\overline{p}\leq2n/(n+2)$. To prove this, we adapt
to our anisotropic setting the very general method developed in \cite{Liao}.
There, it is shown that the existence of lower or upper semicontinuous
representatives is a consequence of some kind of measure-theoretical
maximum principle, referred to in the literature as a Critical Mass
Lemma, or a De Giorgi-type Lemma. In establishing this result, we
follow the approach of \cite{CiaGuaVes}; however, the novelty here
lies in the consideration of an alternative formulation embodying
the non-homogeneity of the energy classes $\mathcal{DG}^{\pm}$. Therefore,
differently from \cite{CiaGuaVes}, the Critical Mass Lemma proved
here holds under an additional condition involving the exponents $p_{i}$,
the degeneracy thresholds $\delta_{i}$, and the parameters $M,\rho$
defining the intrinsic cylinders used in the proof (see Lemma \ref{lem:critmass}
below).\\
\smallskip{}

\end{singlespace}

\subsection{Structure of the paper}

$\hspace*{1em}$The paper is organized as follows. Section \ref{sec:prelim}
introduces the notion of local weak sub(super)-solution and the associated
function spaces, and collects the necessary preliminary material,
including standard notation, the main embedding results and Steklov
averages. In Section \ref{sec:energy_estimate} we derive the main
energy estimates and, accordingly, we define the functional classes
$\mathcal{DG}^{+}(\mathbf{p},\{\delta_{i}\},\Omega_{T},\mathcal{C})$
and $\mathcal{DG}^{-}(\mathbf{p},\{\delta_{i}\},\Omega_{T},\mathcal{C})$.
Next, in Section \ref{sec:boundedness}, we study the local boundedness
from above or below of functions belonging to the classes $\mathcal{DG}^{\pm}(\mathbf{p},\{\delta_{i}\},\Omega_{T},\mathcal{C})$.
Lastly, in Section \ref{sec:DeGiorgiLemma} we establish a Critical
Mass Lemma, which is then used to prove the existence of semicontinuous
representatives for local weak sub(super)-solutions to (\ref{eq:equation}).
The final \hyperref[sec:appendice]{Appendix} provides details on
the construction of suitable cut-off functions employed in our proofs.
\selectlanguage{english}%
\begin{singlespace}

\section{Notation and preliminaries\label{sec:prelim}}
\end{singlespace}

\selectlanguage{british}%
\noindent $\hspace*{1em}$In this paper we shall denote by $C$ or
$c$ a general positive constant that may vary on different occasions,
even within the same line of estimates. Relevant dependencies on parameters
and special constants will be suitably emphasized using parentheses
or subscripts. The norm we use on $\mathbb{R}^{k}$, $k\in\mathbb{N}$,
will be the standard Euclidean one and it will be denoted by $\left|\,\cdot\,\right|$.
In particular, for the vectors $\xi,\eta\in\mathbb{R}^{k}$, we write
$\langle\xi,\eta\rangle$ for the usual inner product and $\left|\xi\right|:=\langle\xi,\xi\rangle^{\frac{1}{2}}$
for the corresponding Euclidean norm.\\
$\hspace*{1em}$For $\rho>0$ and $x_{0}\in\mathbb{R}^{n}$, we define
the following $n$-dimensional hyper-rectangles:
\[
K_{\rho}:=\,\prod_{i=1}^{n}\,(-\rho^{\frac{1}{p_{i}}},\rho^{\frac{1}{p_{i}}})\,,
\]
\[
K_{\rho}(x_{0}):=\,\prod_{i=1}^{n}\,(x_{0,i}-\rho^{\frac{1}{p_{i}}},x_{0,i}+\rho^{\frac{1}{p_{i}}})\,.
\]
Moreover, for $\rho>0$ we define the space-time cylinder 
\[
Q_{\rho}:=\,K_{\rho}\times(-\rho,0)\,,
\]
and if $(x_{0},t_{0})\in\mathbb{R}^{n+1}$, we let $Q_{\rho}(x_{0},t_{0})$
denote the cylinder with vertex at $(x_{0},t_{0})$ congruent to $Q_{\rho}$,
i.e.,
\[
Q_{\rho}(x_{0},t_{0}):=\,K_{\rho}(x_{0})\times(t_{0}-\rho,t_{0})\,.
\]
For a general cylinder $Q=B\times(t_{0},t_{1})$, where $B\subset\mathbb{R}^{n}$
and $t_{0}<t_{1}$, we denote by 
\[
\partial_{\mathrm{par}}Q:=\,(\overline{B}\times\{t_{0}\})\cup(\partial B\times(t_{0},t_{1}))
\]
the usual \textit{parabolic boundary} of $Q$, which is nothing but
its standard topological boundary without the upper cap $\overline{B}\times\{t_{1}\}$.

\selectlanguage{english}%
\noindent $\hspace*{1em}$If $E\subseteq\mathbb{R}^{k}$ is a Lebesgue-measurable
set, then we will denote by $\vert E\vert$ its $k$-dimensional Lebesgue
measure. \foreignlanguage{american}{When $0<\vert E\vert<\infty$,
the mean value of a function $v\in L^{1}(E)$ is defined by 
\[
\fint_{E}v(y)\,dy\,:=\,\frac{1}{\vert E\vert}\int_{E}v(y)\,dy\,.
\]
}\foreignlanguage{british}{$\hspace*{1em}$Given a vector of real
numbers $\mathbf{p}=(p_{1},\ldots,p_{n})$ with $p_{i}>1$ for every
$i\in\{1,\ldots,n\}$, we define the following spaces}\\
\foreignlanguage{british}{
\[
W_{0}^{1,\mathbf{p}}(\Omega):=\left\{ v\in W_{0}^{1,1}(\Omega):\partial_{x_{i}}v\in L^{p_{i}}(\Omega),\,\,\mathrm{for\,\,all}\,\,i=1,\ldots,n\right\} ,
\]
\vspace{0.5mm}
\[
W_{loc}^{1,\mathbf{p}}(\Omega):=\left\{ v\in W_{loc}^{1,1}(\Omega):\partial_{x_{i}}v\in L_{loc}^{p_{i}}(\Omega),\,\,\mathrm{for\,\,all}\,\,i=1,\ldots,n\right\} ,
\]
\vspace{0.5mm}
\[
L^{\mathbf{p}}(0,T;W_{0}^{1,\mathbf{p}}(\Omega)):=\left\{ v\in L^{1}(0,T;W_{0}^{1,1}(\Omega)):\partial_{x_{i}}v\in L^{p_{i}}(\Omega_{T}),\,\,\mathrm{for\,\,all}\,\,i=1,\ldots,n\right\} ,
\]
\vspace{0.5mm}
\[
L_{loc}^{\mathbf{p}}(0,T;W_{0}^{1,\mathbf{p}}(\Omega)):=\left\{ v\in L_{loc}^{1}(0,T;W_{0}^{1,1}(\Omega)):\partial_{x_{i}}v\in L_{loc}^{p_{i}}(0,T;L_{loc}^{p_{i}}(\Omega)),\,\,\mathrm{for\,\,all}\,\,i=1,\ldots,n\right\} ,
\]
\vspace{0.5mm}
\[
L_{loc}^{\mathbf{p}}(0,T;W_{loc}^{1,\mathbf{p}}(\Omega)):=\left\{ v\in L_{loc}^{1}(0,T;W_{loc}^{1,1}(\Omega)):\partial_{x_{i}}v\in L_{loc}^{p_{i}}(\Omega_{T}),\,\,\mathrm{for\,\,all}\,\,i=1,\ldots,n\right\} .
\]
\vspace{0.5mm}
}

\noindent Throughout this paper, we let
\[
\mathcal{P}:=\,\max\,\{p_{1},\ldots,p_{n}\}\,\,\,\,\,\,\,\,\,\,\,\,\,\,\,\mathrm{and}\,\,\,\,\,\,\,\,\,\,\,\,\,\,\,P:=\,\max\,\{2,\mathcal{P}\}\,.
\]
$\hspace*{1em}$Now \foreignlanguage{british}{let $F:\Omega_{T}\times\mathbb{R}^{n}\to\mathbb{R}$
be the function defined by
\begin{equation}
F(x,t,\xi):=\sum_{i=1}^{n}\,\frac{a_{i}(x,t)}{p_{i}}\,(\vert\xi_{i}\vert-\delta_{i})_{+}^{p_{i}}\,.\label{eq:F}
\end{equation}
}In this work, we define a local weak sub(super)-solution of (\ref{eq:equation})
as follows.\medskip{}

\selectlanguage{british}%
\begin{defn}
\noindent \label{def:weaksoldef}A function $u\in L_{loc}^{\mathbf{p}}(0,T;W_{loc}^{1,\mathbf{p}}(\Omega))\cap L_{loc}^{P}(\Omega_{T})$
is a \textit{local weak sub(super)-solution} of (\ref{eq:equation})
in $\Omega_{T}$ if
\begin{equation}
\iint_{\Omega_{T}}\left(u\,\partial_{t}\varphi\,-\langle D_{\xi}F(x,t,Du),D\varphi\rangle\right)dx\,dt\,\ge\,(\leq)\,0\label{eq:locweaksol}
\end{equation}
for all test functions $\varphi\in C_{0}^{\infty}(\Omega_{T})$, $\varphi\geq0$. 
\end{defn}

\begin{defn}
\noindent \label{def:locweaksol2}A function $u\in L_{loc}^{\mathbf{p}}(0,T;W_{loc}^{1,\mathbf{p}}(\Omega))\cap L_{loc}^{P}(\Omega_{T})$
is a \textit{local weak solution} of (\ref{eq:equation}) in $\Omega_{T}$
if
\[
\iint_{\Omega_{T}}\left(u\,\partial_{t}\varphi\,-\langle D_{\xi}F(x,t,Du),D\varphi\rangle\right)dx\,dt\,=\,0
\]
for all test functions $\varphi\in C_{0}^{\infty}(\Omega_{T})$. 
\end{defn}

\noindent $\hspace*{1em}$Given a real number $\ell\geq1$, $\ell^{*}$
is its Sobolev exponent, i.e. \foreignlanguage{english}{
\begin{equation}
\ell^{\ast}:=\begin{cases}
\begin{array}{cc}
\frac{n\ell}{n-\ell} & \mathrm{if}\,\,\,\ell<n,\\
\mathrm{any\,\,value\,\,in}\,\,(\ell,\infty) & \mathrm{if}\,\,\,\ell\geq n,
\end{array}\end{cases}\label{eq:SobConj}
\end{equation}
and $\ell'$ is the conjugate exponent of $\ell$, i.e. $\ell'=\frac{\ell}{\ell-1}$
if $\ell>1$, while $\ell'=\infty$ if $\ell=1$. Moreover, we denote
by $\overline{p}$ the harmonic average of $\mathbf{p}=(p_{1},\ldots,p_{n})$,
i.e.
\[
\overline{p}\,:=\left(\frac{1}{n}\,\sum_{i=1}^{n}\frac{1}{p_{i}}\right)^{-1}.
\]
}

\noindent $\hspace*{1em}$Next, we recall some useful anisotropic
Sobolev embeddings.
\begin{lem}[Sobolev-Troisi embedding, \cite{DMV}]
\noindent Let $\mathcal{K}\subseteq\mathbb{R}^{n}$ be a rectangular
domain, $\overline{p}<n$ and $\alpha_{i}>0$, $i\in\{1,\ldots,n\}.$
If we define 
\[
\boldsymbol{\alpha}\,=\,(\alpha_{1},\ldots,\alpha_{n})\,,\,\,\,\,\,\,\,\,\,\,\tilde{\alpha}\,=\sum_{i=1}^{n}\alpha_{i}\,,\,\,\,\,\,\,\,\,\,\,p_{\alpha}^{*}\,=\,\overline{p}^{*}\,\frac{\tilde{\alpha}}{n}\,,
\]
then there exists a constant $C=C(n,\mathbf{p},\boldsymbol{\alpha})>0$
such that 
\[
\Vert v\Vert_{L^{p_{\alpha}^{*}}(\mathcal{K})}\,\leq\,C\,\prod_{i=1}^{n}\Vert\partial_{x_{i}}\vert v\vert^{\alpha_{i}}\Vert_{L^{p_{i}}(\mathcal{K})}^{\frac{1}{\tilde{\alpha}}}
\]
for all $v\in W_{0}^{1,\mathbf{p}}(\mathcal{K})$.
\end{lem}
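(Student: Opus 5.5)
The embedding reduces to Troisi's anisotropic Sobolev inequality, applied to the function $w:=\vert v\vert^{\gamma}$ with $\gamma:=\tilde{\alpha}/n$. The inequality to aim for is
\[
\Vert w\Vert_{L^{\overline{p}^{*}}(\mathcal{K})}\,\leq\,C(n,\mathbf{p})\,\prod_{i=1}^{n}\Vert\partial_{x_{i}}w\Vert_{L^{p_{i}}(\mathcal{K})}^{\frac{1}{n}}
\]
for $w\in W_{0}^{1,\mathbf{p}}(\mathcal{K})$ and $\overline{p}<n$. The left-hand side matches the claim, since $\Vert w\Vert_{L^{\overline{p}^{*}}}=\Vert v\Vert_{L^{p_{\alpha}^{*}}}^{\gamma}$ by the definition $p_{\alpha}^{*}=\overline{p}^{*}\gamma$. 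So the whole task is to bound each $\Vert\partial_{x_{i}}w\Vert_{L^{p_{i}}}$ by a product of powers of $\Vert\partial_{x_{i}}\vert v\vert^{\alpha_{i}}\Vert_{L^{p_{i}}}$ and of $\Vert v\Vert_{L^{p_{\alpha}^{*}}}$, and then absorb.

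\textbf{Step 1: Troisi's inequality.} I would prove it first for $w\in C_{c}^{\infty}(\mathcal{K})$ and then pass to the closure.
\begin{itemize}
\item Start from the one-dimensional estimate $\vert w(x)\vert\leq\int_{\mathbb{R}}\vert\partial_{x_{i}}w\vert\,dx_{i}$.
\item Apply the Gagliardo--Nirenberg product argument (the Loomis--Whitney type inequality) to $\vert w\vert^{s}$ with a suitable power $s$.
\item Use H\"older in each direction with exponent $p_{i}$.
\item Choose $s$ so that $\vert w\vert^{(s-1)p_{i}'}$ is controlled through $\overline{p}^{*}$. Because the exponent $(s-1)p_{i}'$ depends on $i$, I would handle the mismatch either by the classical weighted choice in Troisi's paper or by citing \cite{DMV} directly, since the lemma is attributed there.
\end{itemize}

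\textbf{Step 2: the chain rule.} Formally, $\partial_{x_{i}}w=\frac{\gamma}{\alpha_{i}}\vert v\vert^{\gamma-\alpha_{i}}\partial_{x_{i}}\vert v\vert^{\alpha_{i}}$. This step needs truncations and regularisation so that $w$ really belongs to $W_{0}^{1,\mathbf{p}}$; when $\gamma<\alpha_{i}$ one must justify the negative power $\vert v\vert^{\gamma-\alpha_{i}}$, which I would do via $(\vert v\vert+\varepsilon)$ and monotone convergence. The estimate on $\partial_{x_{i}}w$ then comes from H\"older, and it is cleaner to perform Steps 1 and 2 together: run the Gagliardo product argument directly on $\vert v\vert^{\sigma}$ for a single well-chosen $\sigma$, writing $\partial_{x_{i}}\vert v\vert^{\sigma}=\frac{\sigma}{\alpha_{i}}\vert v\vert^{\sigma-\alpha_{i}}\partial_{x_{i}}\vert v\vert^{\alpha_{i}}$ in each direction. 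H\"older in direction $i$ gives
\[
\int\vert v\vert^{\sigma-\alpha_{i}}\,\vert\partial_{x_{i}}\vert v\vert^{\alpha_{i}}\vert\,dx\,\leq\,\Vert\partial_{x_{i}}\vert v\vert^{\alpha_{i}}\Vert_{p_{i}}\,\Bigl(\int\vert v\vert^{(\sigma-\alpha_{i})p_{i}'}\,dx\Bigr)^{\frac{1}{p_{i}'}},
\]
and the resulting factors are combined through the geometric mean over $i$.

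\textbf{Step 3: choosing $\sigma$ and absorbing.} Take $\sigma$ so that $\sigma\,n/(n-1)=p_{\alpha}^{*}$ on the left. The remaining factors involve $\vert v\vert^{(\sigma-\alpha_{i})p_{i}'}$, which generally differ from $\vert v\vert^{p_{\alpha}^{*}}$. Since $\mathcal{K}$ need not have small measure, each of these should be reduced to the $L^{p_{\alpha}^{*}}$ norm by an interpolation in the exponents; if that is not available, I would fall back to a weighted choice of powers $\sigma_{i}$ in each direction, as in Troisi's original proof. The exponent identity that must close is the scaling relation
\[
\frac{1}{p_{\alpha}^{*}}\,=\,\frac{n}{\tilde{\alpha}}\Bigl(\frac{1}{\overline{p}}-\frac{1}{n}\Bigr),
\]
and dimensional analysis under the anisotropic dilations $x_{i}\mapsto\lambda^{1/p_{i}}x_{i}$ confirms the final powers $1/\tilde{\alpha}$. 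After that, divide by the appropriate power of $\Vert v\Vert_{L^{p_{\alpha}^{*}}}$, which is finite first for bounded, truncated $v$, and conclude by monotone convergence.

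The main obstacle is this exponent bookkeeping in Step 3: making the direction-dependent powers $\vert v\vert^{(\sigma-\alpha_{i})p_{i}'}$ close on the single norm $\Vert v\Vert_{L^{p_{\alpha}^{*}}}$ simultaneously for all $i$.
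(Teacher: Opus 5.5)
The paper gives no proof of this lemma; it is quoted from \cite{DMV}, so your sketch has to stand on its own. It has a genuine gap exactly at the step you flag: the exponent bookkeeping is the whole content of the lemma, and neither concrete route you describe closes.

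Your first route applies Troisi to $w=\vert v\vert^{\tilde\alpha/n}$ and then bounds each $\Vert\partial_{x_i}w\Vert_{L^{p_i}}$ by powers of $\Vert\partial_{x_i}\vert v\vert^{\alpha_i}\Vert_{L^{p_i}}$ and $\Vert v\Vert_{L^{p_\alpha^*}}$, with a constant depending only on $n,\mathbf p,\boldsymbol\alpha$. This is impossible unless $\alpha_i=\tilde\alpha/n$. Test such a bound with $v\mapsto tv$ and with the independent dilations $x_j\mapsto\lambda_j x_j$: this forces the power of $\Vert\partial_{x_i}\vert v\vert^{\alpha_i}\Vert_{L^{p_i}}$ to be $1$, the power of $\Vert v\Vert_{L^{p_\alpha^*}}$ to be $0$, and then $\alpha_i=\tilde\alpha/n$. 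So the factor $\vert v\vert^{\tilde\alpha/n-\alpha_i}$ is not a justification issue that the $\varepsilon$-regularisation could fix; the estimate itself is false.

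Your second route uses a single power $\sigma=(n-1)p_\alpha^*/n$ in the Gagliardo product. It produces factors $\Vert v\Vert_{L^{r_i}}^{\sigma-\alpha_i}$ with $r_i=(\sigma-\alpha_i)p_i'$. These satisfy $r_i=p_\alpha^*$ for all $i$ only under the special relation $\alpha_i=\sigma\bigl(1-\frac{n}{(n-1)p_i'}\bigr)$. In general they differ, and $\sigma-\alpha_i$ can even be negative (e.g.\ $\overline p$ close to $1$ and $\alpha_i$ above the mean of the $\alpha_j$). Interpolation cannot rescue this: the right-hand side contains no second norm of $v$ to interpolate against, and by the invariance under $v\mapsto tv$ and $x_j\mapsto\lambda_jx_j$, neither $\Vert v\Vert_{L^\infty}$ nor $\vert\mathcal K\vert$ may enter.

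The missing idea is the one you list only as a fallback: use direction-dependent powers $s_i$ in the Gagliardo product. Their choice is forced by requiring every H\"older factor to be a power of the same norm as the left-hand side. For bounded $v$ extended by zero, the bound $\vert v\vert^{s_i}\le\int_{\mathbb R}\vert\partial_{x_i}\vert v\vert^{s_i}\vert\,dx_i$ and the Loomis--Whitney lemma give
\[
\int_{\mathbb{R}^{n}}\vert v\vert^{\frac{\tilde{s}}{n-1}}\,dx\,\le\,\prod_{i=1}^{n}\Vert\partial_{x_{i}}\vert v\vert^{s_{i}}\Vert_{L^{1}}^{\frac{1}{n-1}},\qquad\tilde{s}:=\sum_{i=1}^{n}s_{i}\,.
\]
Set $q:=p_\alpha^*$ and $s_i:=\alpha_i+q\,(1-\frac{1}{p_i})\ge\alpha_i$. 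Then $(s_i-\alpha_i)p_i'=q$ for every $i$. The definition of $p_\alpha^*$, namely $q\,(\frac{n}{\overline p}-1)=\tilde\alpha$, is exactly the identity $\tilde s=(n-1)q$. The chain rule and H\"older in each direction give $\Vert\partial_{x_i}\vert v\vert^{s_i}\Vert_{L^1}\le\frac{s_i}{\alpha_i}\Vert v\Vert_{L^q}^{q/p_i'}\Vert\partial_{x_i}\vert v\vert^{\alpha_i}\Vert_{L^{p_i}}$. Since $\sum_i q/p_i'=(n-1)q-\tilde\alpha$, this yields
\[
\Vert v\Vert_{L^{q}}^{q}\,\le\,C\,\Vert v\Vert_{L^{q}}^{\,q-\frac{\tilde{\alpha}}{n-1}}\,\prod_{i=1}^{n}\Vert\partial_{x_{i}}\vert v\vert^{\alpha_{i}}\Vert_{L^{p_{i}}}^{\frac{1}{n-1}},\qquad C=\prod_{i=1}^{n}\Bigl(\frac{s_{i}}{\alpha_{i}}\Bigr)^{\frac{1}{n-1}}.
\]
After division this is the claim with the power $1/\tilde\alpha$. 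Apply it to $\min\{\vert v\vert,k\}$ on the bounded box: its $L^q$ norm is finite, and $\partial_{x_i}\min\{\vert v\vert,k\}^{\alpha_i}=\mathds{1}_{\{\vert v\vert<k\}}\partial_{x_i}\vert v\vert^{\alpha_i}$. Then let $k\to\infty$. Only non-negative powers $\vert v\vert^{s_i-\alpha_i}$ occur, so no $\varepsilon$-regularisation is needed.
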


\noindent As a corollary, one gets the usual Troisi inequality (see
\cite{Troisi}).

\noindent \begin{brem}\label{SobEmb}Let $\mathcal{K}\subseteq\mathbb{R}^{n}$
be a rectangular domain. Then there exists a constant $C=C(n,\mathbf{p})>0$
such that
\begin{equation}
\Vert v\Vert_{L^{\overline{p}^{*}}(\mathcal{K})}\,\leq\,C\,\prod_{i=1}^{n}\Vert\partial_{x_{i}}v\Vert_{L^{p_{i}}(\mathcal{K})}^{\frac{1}{n}}\label{eq:anisEmb}
\end{equation}
for all $v\in W_{0}^{1,\mathbf{p}}(\mathcal{K})$. Moreover, taking
both sides of (\ref{eq:anisEmb}) to the exponent $\overline{p}$
and using Young's inequality on the right-hand side with exponents
$\frac{n\,p_{i}}{\overline{p}}$, whose inverses add up to $1$ due
to the definition of $\overline{p}$, we obtain the following useful
inequality:
\begin{equation}
\left(\int_{\mathcal{K}}\vert v\vert^{\overline{p}^{*}}\,dx\right)^{\frac{\overline{p}}{\overline{p}^{*}}}\leq\,C\,\sum_{i=1}^{n}\int_{\mathcal{K}}\vert\partial_{x_{i}}v\vert^{p_{i}}\,dx\,,\,\,\,\,\,\,\,\,v\in W_{0}^{1,\mathbf{p}}(\mathcal{K})\,.\label{eq:Troisi}
\end{equation}
\end{brem}

\noindent $\hspace*{1em}$The next lemma plays a crucial role in the
De Giorgi-type iterations; we refer to \cite[Lemma 7.1]{Giu} for
a proof.
\begin{lem}
\label{lem:Giusti}Let $\mu>0$ and let $\{Y_{j}\}_{j\,\in\,\mathbb{N}_{0}}$
be a sequence of non-negative real numbers, satisfying the recursive
inequalities 
\[
Y_{j+1}\,\leq\,C\,b^{j}\,Y_{j}^{1+\mu}
\]
where $C>0$ and $b>1$. If $Y_{0}\leq C^{-\,\frac{1}{\mu}}\,b^{-\,\frac{1}{\mu^{2}}}$,
then 
\[
\lim_{j\to\infty}Y_{j}=0\,.
\]
\end{lem}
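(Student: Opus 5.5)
We prove Lemma \ref{lem:Giusti} by induction, showing that $Y_j$ decays geometrically with an explicit rate. Set
\[
\theta:=b^{-\frac{1}{\mu}}\in(0,1).
\]
The claim is that
\begin{equation*}
Y_{j}\,\leq\,Y_{0}\,\theta^{j}\qquad\mbox{for every } j\in\mathbb{N}_{0}.
\end{equation*}
Since $\theta<1$ and $Y_j\geq0$, this gives $Y_j\to0$ at once.

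The base case $j=0$ is trivial. For the inductive step, assume $Y_j\leq Y_0\theta^j$. The recursive inequality gives
\[
Y_{j+1}\,\leq\,C\,b^{j}\,Y_{j}^{1+\mu}\,\leq\,C\,b^{j}\,Y_{0}^{1+\mu}\,\theta^{j(1+\mu)}\,=\,Y_{0}\,\theta^{j}\cdot\bigl(C\,Y_{0}^{\mu}\bigr)\cdot\bigl(b\,\theta^{\mu}\bigr)^{j}.
\]
By the choice of $\theta$ we have $b\,\theta^{\mu}=1$, so the last factor equals $1$. The smallness hypothesis $Y_0\leq C^{-1/\mu}b^{-1/\mu^2}$ is equivalent to
\[
C\,Y_{0}^{\mu}\,\leq\,b^{-\frac{1}{\mu}}\,=\,\theta.
\]
Hence $Y_{j+1}\leq Y_0\theta^{j+1}$, which closes the induction.

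The only real point is choosing the decay rate $\theta$ so that it does two jobs at once. It must cancel the growth factor $b^j$ exactly, which forces $\theta=b^{-1/\mu}$. It must also absorb the constant $C$ through the assumed size of $Y_0$. The threshold $C^{-1/\mu}b^{-1/\mu^{2}}$ in the statement is precisely what makes both hold with this $\theta$, so once $\theta$ is fixed the remaining algebra is routine.
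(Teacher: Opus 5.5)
Your proof is correct and is the standard argument: the paper gives no proof of its own and simply cites \cite[Lemma 7.1]{Giu}, whose proof is exactly this induction $Y_j\le b^{-j/\mu}Y_0$. All your steps check out, in particular $b\,\theta^{\mu}=1$ for $\theta=b^{-1/\mu}$ and the equivalent form $C\,Y_0^{\mu}\le\theta$ of the smallness hypothesis.
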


\selectlanguage{american}%

\subsection{Steklov averages\label{subsec:SteklovAverages}}

\selectlanguage{british}%
$\hspace*{1em}$\foreignlanguage{american}{In this section, we recall
the definition and some elementary properties of Steklov averages.
Let us denote a domain in space-time by $Q':=\Omega'\times I$, where
$\Omega'\subseteq\Omega$ is a bounded domain and $I:=(t_{0},t_{1})\subseteq(0,T)$.
For every $h\in(0,t_{1}-t_{0})$ and $v\in L^{1}(Q',\mathbb{R}^{k})$,
where $k\in\mathbb{N}$, the \textit{Steklov average} $[v]_{h}(\cdot,t)$
is defined by 
\[
[v]_{h}(x,t):=\begin{cases}
\begin{array}{cc}
{\displaystyle \frac{1}{h}\int_{t}^{t+h}v(x,s)\,ds} & \,\,\,\text{if }\,t\in(t_{0},t_{1}-h],\\
0 & \,\,\,\text{if }\,t\in(t_{1}-h,t_{1}),
\end{array}\end{cases}
\]
for $x\in\Omega'$. This definition implies, for almost every $(x,t)\in\Omega'\times(t_{0},t_{1}-h)$,
\[
\frac{\partial[v]_{h}}{\partial t}(x,t)\,=\,\frac{v(x,t+h)-v(x,t)}{h}\,.
\]
}

\noindent $\hspace*{1em}$\foreignlanguage{american}{The proof of
the following result is straightforward from the theory of Lebesgue
spaces (see \cite[Lemma 3.2, Chapter I]{DiBe}).\medskip{}
}
\selectlanguage{american}%
\begin{lem}
\label{lem:Stek}Let $q,r\geq1$ and $v\in L^{r}\left(t_{0},t_{1};L^{q}(\Omega')\right)$.
Then, as $h\rightarrow0$, $[v]_{h}$ converges to $v$ in $L^{r}\left(t_{0},t_{1}-\varepsilon;L^{q}(\Omega')\right)$
for every $\varepsilon\in(0,t_{1}-t_{0})$. If $v\in C^{0}\left(t_{0},t_{1};L^{q}(\Omega')\right)$,
then as $h\rightarrow0$, $[v]_{h}(\cdot,t)$ converges to $v(\cdot,t)$
in $L^{q}(\Omega')$ for every $t\in(t_{0},t_{1}-\varepsilon)$, $\forall\,\,\varepsilon\in(0,t_{1}-t_{0})$.
\end{lem}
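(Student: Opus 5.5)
The claim to prove is Lemma \ref{lem:Stek}: Steklov averages converge in $L^{r}(t_{0},t_{1}-\varepsilon;L^{q}(\Omega'))$, and pointwise in time in $L^{q}(\Omega')$ when $v$ is continuous in time. The plan is to read $[v]_{h}$ as a convolution in time with a one-sided kernel, and then use the standard approximation-by-continuous-functions argument in the Bochner space $X:=L^{r}(t_{0},t_{1};L^{q}(\Omega'))$.

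Fix $\varepsilon$ and take $h<\varepsilon$. Then for every $t\in(t_{0},t_{1}-\varepsilon)$ we have $t+h<t_{1}$, so the averaging formula applies and
\[
[v]_{h}(\cdot,t)=\int_{0}^{1}v(\cdot,t+h\sigma)\,d\sigma .
\]
\textbf{Step 1 (uniform bound).} By Minkowski's integral inequality in $L^{q}(\Omega')$ and then in $L^{r}$ in time,
\[
\Vert[v]_{h}\Vert_{L^{r}(t_{0},t_{1}-\varepsilon;L^{q})}\le\int_{0}^{1}\Vert v(\cdot,\cdot+h\sigma)\Vert_{L^{r}(t_{0},t_{1}-\varepsilon;L^{q})}\,d\sigma\le\Vert v\Vert_{X}.
\]
So the map $v\mapsto[v]_{h}$ is a contraction from $X$ into $L^{r}(t_{0},t_{1}-\varepsilon;L^{q}(\Omega'))$, uniformly in $h$.

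\textbf{Step 2 (convergence).} The same Minkowski argument gives
\[
\Vert[v]_{h}-v\Vert_{L^{r}(t_{0},t_{1}-\varepsilon;L^{q})}\le\sup_{0\le s\le h}\Vert v(\cdot,\cdot+s)-v\Vert_{L^{r}(t_{0},t_{1}-\varepsilon;L^{q})}.
\]
It therefore suffices to show continuity of time translations in $X$. This is the only real step. Since $q,r<\infty$, functions $w\in C([t_{0},t_{1}];L^{q}(\Omega'))$ are dense in $X$ (simple functions, then mollification in time). For such $w$, uniform continuity in time gives $\Vert w(\cdot+s)-w\Vert\to0$ as $s\to0$. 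The general case follows by an $\eta/3$ argument, using that translations are contractions as well.

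\textbf{Step 3 (continuous case).} If $v\in C^{0}(t_{0},t_{1};L^{q}(\Omega'))$ and $t<t_{1}-\varepsilon$, then
\[
\Vert[v]_{h}(\cdot,t)-v(\cdot,t)\Vert_{L^{q}}\le\sup_{0\le s\le h}\Vert v(\cdot,t+s)-v(\cdot,t)\Vert_{L^{q}}\to0,
\]
by continuity of $v$ at $t$.

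The only mild obstacle is measurability and the validity of Minkowski's inequality for Bochner-valued integrals. I would handle this by working with representatives that are jointly measurable on $\Omega'\times I$ and applying Fubini–Tonelli directly.
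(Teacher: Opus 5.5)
Your argument is correct, and it is the standard Lebesgue-space proof that the paper omits, citing instead DiBenedetto's monograph (Lemma 3.2, Chapter I). You write $[v]_h(\cdot,t)=\int_0^1 v(\cdot,t+h\sigma)\,d\sigma$ for $h<\varepsilon$, apply Minkowski's inequality, and reduce to continuity of time translations via density of $C([t_0,t_1];L^q(\Omega'))$, which is exactly the route the citation alludes to. Reading ``$q,r\geq1$'' as finite exponents is also right, since the first assertion fails for $r=\infty$ (take $v=\mathds{1}_{\{t<a\}}$).
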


\selectlanguage{british}%
\noindent $\hspace*{1em}$\foreignlanguage{american}{A very useful
formulation, equivalent to (\ref{eq:locweaksol}), is the one involving
Steklov averages. Assume that $u\in L_{loc}^{\mathbf{p}}(0,T;W_{loc}^{1,\mathbf{p}}(\Omega))\cap L_{loc}^{P}(\Omega_{T})$
is a local weak sub(super)-solution of (\ref{eq:equation}) in $\Omega_{T}$
and let $h\in(0,T)$. Then, the Steklov average $[u]_{h}$ satisfies
\begin{equation}
\int_{\mathcal{K}\times\{\tau\}}\left(\frac{\partial[u]_{h}}{\partial t}\cdot\varphi\,+\langle[D_{\xi}F(x,t,Du)]_{h},D\varphi\rangle\right)dx\,\leq\,(\geq)\,0\label{eq:Steklov}
\end{equation}
for every compact subset $\mathcal{K}$ of $\Omega$, for all $\tau\in(0,T-h]$
and all test functions 
\[
\varphi\,\in\,C_{loc}^{0}\left(0,T;L^{2}(\mathcal{K})\right)\cap L_{loc}^{\mathbf{p}}\left(0,T;W_{0}^{1,\mathbf{p}}(\mathcal{K})\right),\,\,\,\,\,\,\varphi\geq0\,.
\]
}

\section{Energy classes\label{sec:energy_estimate}}

\noindent $\hspace*{1em}$In this section, we establish some energy
estimates that will be useful for proving our results. Throughout
the sequel, $(x_{0},t_{0})\in\Omega_{T}$ and $\rho>0$ are such that
$Q_{\rho}(x_{0},t_{0})\Subset\Omega_{T}$, while $\zeta\in C^{\infty}(\mathbb{R}^{n+1})$
is a smooth cut-off function of the form 
\begin{equation}
\zeta(x,t):=\,\psi(t)\,\prod_{i=1}^{n}\zeta_{i}^{p_{i}}(x_{i})\,,\label{eq:cut-off}
\end{equation}
where $\psi\in C^{\infty}(\mathbb{R};[0,1])$ is a non-decreasing
map such that
\begin{equation}
\psi\equiv0\,\,\,\,\,\mathrm{on}\,\,\left(-\infty,t_{0}-\rho+\frac{\epsilon}{2}\right]\,,\,\,\,\,\,\,\,\,\psi\equiv1\,\,\,\,\,\mathrm{on}\,\,\,[t_{0}-\rho+\epsilon,\infty)\,,\,\,\,\,\,\mathrm{with\,\,\,}\epsilon\in(0,\rho)\,,\label{eq:psi}
\end{equation}
while the functions $\zeta_{i}$ satisfy 
\begin{equation}
\zeta_{i}\,\in\,C_{0}^{\infty}(\mathbb{R};[0,1]),\,\,\,\,\,\,\,\,\mathrm{supp}\,\zeta_{i}\subset(x_{0,i}-\rho^{\frac{1}{p_{i}}},x_{0,i}+\rho^{\frac{1}{p_{i}}})\,\,\,\,\,\,\,\,\mathrm{for\,\,every}\,\,i\in\{1,\ldots,n\}.\label{eq:zeta_i}
\end{equation}
As an immediate consequence of these conditions, we have that
\[
\zeta\equiv0\,\,\,\,\,\,\mathrm{on\,\,the\,\,parabolic\,\,boundary\,\,of\,\,}Q_{\rho}(x_{0},t_{0})\,.
\]

\begin{prop}
\noindent \label{prop:PropEnergy}Let $n\geq2$ and $p_{1},\ldots,p_{n}>1$.
Assume that $(\ref{eq:coeff})$ holds and that $u$ is a local weak
subsolution of $(\ref{eq:equation})$ in the sense of Definition \ref{def:weaksoldef}.
Then there exists a positive constant $\mathcal{C}$, depending only
on $\Lambda$ and $\max\,\{p_{1},\ldots,p_{n}\}$, such that for every
$k\in\mathbb{R}$ we have\begin{align}\label{eq:energy_est}
&\underset{t_{0}-\rho\,<\,\tau\,<\,t_{0}}{\sup}\,\int_{K_{\rho}(x_{0})}(u-k)_{+}^{2}\,\zeta(x,\tau)\,dx\,+\,\sum_{i=1}^{n}\iint_{Q_{\rho}(x_{0},t_{0})}(\vert \partial_{x_{i}}u\vert-\delta_{i})_{+}^{p_{i}}\,\zeta\,\mathds{1}_{\{u\,>\,k\}}\,dx\,dt\nonumber\\
&\,\,\,\,\,\,\,\leq\,\mathcal{C}\iint_{Q_{\rho}(x_{0},t_{0})}(u-k)_{+}^{2}\,\partial_t\zeta\,dx\,dt\,+\,\mathcal{C}\,\sum_{i=1}^{n}\iint_{Q_{\rho}(x_{0},t_{0})}(u-k)_{+}^{p_{i}}\,\vert\partial_{x_{i}}\zeta^{\frac{1}{p_{i}}}\vert^{p_{i}}\,dx\,dt\,.
\end{align}Similarly, if $u$ is a local weak supersolution of $(\ref{eq:equation})$,
then for every $k\in\mathbb{R}$ we have\begin{align}\label{eq:energy_est02}
&\underset{t_{0}-\rho\,<\,\tau\,<\,t_{0}}{\sup}\,\int_{K_{\rho}(x_{0})}(u-k)_{-}^{2}\,\zeta(x,\tau)\,dx\,+\,\sum_{i=1}^{n}\iint_{Q_{\rho}(x_{0},t_{0})}(\vert \partial_{x_{i}}u\vert-\delta_{i})_{+}^{p_{i}}\,\zeta\,\mathds{1}_{\{u\,<\,k\}}\,dx\,dt\nonumber\\
&\,\,\,\,\,\,\,\leq\,\mathcal{C}\iint_{Q_{\rho}(x_{0},t_{0})}(u-k)_{-}^{2}\,\partial_t\zeta\,dx\,dt\,+\,\mathcal{C}\,\sum_{i=1}^{n}\iint_{Q_{\rho}(x_{0},t_{0})}(u-k)_{-}^{p_{i}}\,\vert\partial_{x_{i}}\zeta^{\frac{1}{p_{i}}}\vert^{p_{i}}\,dx\,dt\,.
\end{align}
\end{prop}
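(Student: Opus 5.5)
We test the Steklov formulation (\ref{eq:Steklov}) with $\varphi=([u]_{h}-k)_{+}\,\zeta$. Here $\zeta$ is the cut-off (\ref{eq:cut-off}), and we additionally multiply by a time cut-off $\chi_{\tau}$ that equals $1$ on $(t_{0}-\rho,\tau)$ and vanishes after $\tau+\varepsilon'$; this localizes up to an arbitrary $\tau\in(t_{0}-\rho,t_{0})$. Since $u\in L^{P}_{loc}$ with $P\ge\max\{2,p_i\}$ and $\partial_{x_i}u\in L^{p_i}_{loc}$, this $\varphi$ is admissible: it is nonnegative, lies in $C^0_{loc}(L^2)$, and lies in $L^{\mathbf p}(W_0^{1,\mathbf p}(K_\rho(x_0)))$ on each time slice.

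\textbf{Time term.} We integrate in time over $(t_{0}-\rho,\tau)$ and use
\[
\partial_t[u]_h\,([u]_h-k)_+\zeta=\tfrac12\partial_t([u]_h-k)_+^2\,\zeta .
\]
Integrating by parts in $t$, and recalling that $\zeta$ vanishes near $t=t_0-\rho$, we get
\[
\tfrac12\int_{K_\rho}([u]_h-k)_+^2\zeta(x,\tau)\,dx-\tfrac12\iint([u]_h-k)_+^2\partial_t\zeta .
\]
Letting $h\to0$ with Lemma \ref{lem:Stek}, and noting that the integrability $u\in L^2_{loc}$ suffices, replaces $[u]_h$ by $u$ and $[D_\xi F]_h$ by $D_\xi F(x,t,Du)$. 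The latter converges in $L^{p_i'}$ because $|\partial_{\xi_i}F|\le\Lambda|\partial_{x_i}u|^{p_i-1}$. The time-slice term holds for a.e. $\tau$, and the supremum over $\tau$ follows from it.

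\textbf{Diffusion term.} Since $\partial_{x_i}\varphi=\partial_{x_i}u\,\mathds 1_{\{u>k\}}\zeta+(u-k)_+\partial_{x_i}\zeta$, the diffusion term equals
\[
\sum_i\iint a_i(|\partial_{x_i}u|-\delta_i)_+^{p_i-1}|\partial_{x_i}u|\,\mathds 1_{\{u>k\}}\zeta
+\sum_i\iint a_i(|\partial_{x_i}u|-\delta_i)_+^{p_i-1}\tfrac{\partial_{x_i}u}{|\partial_{x_i}u|}(u-k)_+\partial_{x_i}\zeta .
\]
For the first sum we use $|\partial_{x_i}u|\ge(|\partial_{x_i}u|-\delta_i)_+$ together with $a_i\ge\Lambda^{-1}$, which bounds it below by $\Lambda^{-1}\sum_i\iint(|\partial_{x_i}u|-\delta_i)_+^{p_i}\zeta\mathds 1_{\{u>k\}}$. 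This lower bound is exactly where the degeneracy thresholds disappear harmlessly.

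\textbf{Cross term.} This is the main point: we must write $\partial_{x_i}\zeta=p_i\,\zeta^{1-1/p_i}\,\partial_{x_i}\zeta^{1/p_i}$, which is legitimate because $\zeta^{1/p_i}=\psi^{1/p_i}\zeta_i\prod_{j\neq i}\zeta_j^{p_j/p_i}$ is smooth in $x_i$. This is the reason for the product structure in (\ref{eq:cut-off}). Note also that the integrand vanishes off $\{u>k\}$. Then Young's inequality with exponents $p_i'$ and $p_i$ gives
\[
\Lambda p_i(|\partial_{x_i}u|-\delta_i)_+^{p_i-1}\zeta^{\frac{p_i-1}{p_i}}\mathds 1_{\{u>k\}}(u-k)_+|\partial_{x_i}\zeta^{1/p_i}|
\le\tfrac{1}{2\Lambda}(|\partial_{x_i}u|-\delta_i)_+^{p_i}\zeta\mathds 1_{\{u>k\}}+C(\Lambda,p_i)(u-k)_+^{p_i}|\partial_{x_i}\zeta^{1/p_i}|^{p_i}.
\]
We absorb the first piece into the left-hand side. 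Combining the time term and the diffusion term, taking the supremum over $\tau$, and using that $\partial_t\zeta\ge0$ yields (\ref{eq:energy_est}). The constant $\mathcal C$ depends only on $\Lambda$ and $\max p_i$, since $p_i'$-Young constants are uniformly bounded for $p_i$ bounded above and $p_i>1$.

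\textbf{Supersolution case.} Estimate (\ref{eq:energy_est02}) follows identically with the test function $\varphi=([u]_h-k)_-\zeta$ and the reversed inequality in (\ref{eq:Steklov}); equivalently, $-u$ is a subsolution of an equation with the same structure.

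The only delicate points are the admissibility of the test function and passing $h\to0$ in the time derivative. These are handled by the standard argument of \cite{DiBe}, so we expect the cut-off factorization in the cross term to be the key step.
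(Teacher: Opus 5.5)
Your proposal is correct and essentially matches the paper's proof. The paper also tests the Steklov formulation with $([u]_h-k)_+\zeta$ over $(t_0-\rho,\tau)$ and bounds the principal part from below by $\Lambda^{-1}\sum_i(|\partial_{x_i}u|-\delta_i)_+^{p_i}\zeta\,\mathds 1_{\{u>k\}}$. It handles the cross term through the identities $\psi\hat\zeta_i\zeta_i^{p_i}=\zeta$ and $\psi\hat\zeta_i|\zeta_i'|^{p_i}=|\partial_{x_i}\zeta^{1/p_i}|^{p_i}$ (equivalent to your factorization $\partial_{x_i}\zeta=p_i\zeta^{1-1/p_i}\partial_{x_i}\zeta^{1/p_i}$) plus Young's inequality, and it obtains the supersolution case from $-u$. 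Your extra time cut-off $\chi_\tau$ is unnecessary but harmless, and your observation that the slice identity holds only for a.e. $\tau$ is slightly more careful than the paper's treatment.
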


\noindent \begin{proof}[\bfseries{Proof}]Let $u$ be a local weak
subsolution of $(\ref{eq:equation})$. After a translation, we may
assume that $(x_{0},t_{0})=(0,0)$. Hence, it suffices to prove \eqref{eq:energy_est}
for the cylinder $Q_{\rho}$. In (\ref{eq:Steklov}) we take the test
functions 
\[
\varphi=([u]_{h}-k)_{+}\,\zeta
\]
and integrate with respect to time over $(-\rho,\tau)$, with $\tau\in(-\rho,0)$.
We thus obtain 
\begin{equation}
\int_{-\rho}^{\tau}\int_{K_{\rho}}\frac{\partial[u]_{h}}{\partial t}\,([u]_{h}-k)_{+}\,\zeta\,dx\,dt\,+\iint_{Q^{\tau}}\langle[A(x,t,Du)]_{h},D[([u]_{h}-k)_{+}\,\zeta]\rangle\,dx\,dt\,\leq\,0\,,\label{eq:weak1}
\end{equation}
where, for convenience of notation, we set 
\[
Q^{\tau}:=\,K_{\rho}\times(-\rho,\tau)\,\,\,\,\,\,\,\,\,\,\,\,\,\,\,\,\mathrm{and}\,\,\,\,\,\,\,\,\,\,\,\,\,\,\,A(x,t,\eta):=\,D_{\xi}F(x,t,\eta)\,,\,\,\,\,\,\eta\in\mathbb{R}^{n},
\]
with $F:\Omega_{T}\times\mathbb{R}^{n}\to\mathbb{R}$ denoting the
function defined in (\ref{eq:F}). The first term in (\ref{eq:weak1})
can be rewritten as
\[
\int_{-\rho}^{\tau}\int_{K_{\rho}}\frac{\partial[u]_{h}}{\partial t}\,([u]_{h}-k)_{+}\,\zeta\,dx\,dt\,=\,\frac{1}{2}\int_{-\rho}^{\tau}\int_{K_{\rho}}\frac{\partial([u]_{h}-k)_{+}^{2}}{\partial t}\,\zeta\,dx\,dt\,.
\]
Therefore, integrating by parts, using that $\zeta\equiv0$ on $\partial_{\mathrm{par}}Q_{\rho}$
and letting $h\to0$, by Lemma \ref{lem:Stek} we have\begin{align}\label{eq:limite1}
&\int_{-\rho}^{\tau}\int_{K_{\rho}}\frac{\partial[u]_{h}}{\partial t}\,([u]_{h}-k)_{+}\,\zeta\,dx\,dt\,\longrightarrow\nonumber\\
&\,\,\,\,\,\,\,\frac{1}{2}\int_{K_{\rho}}(u-k)_{+}^{2}\,\zeta(x,\tau)\,dx\,-\,\frac{1}{2}\iint_{Q^{\tau}}(u-k)_{+}^{2}\,\partial_t\zeta\,dx\,dt\,.
\end{align} Now observe that $\vert A_{i}(x,t,Du)\vert\leq\Lambda\,\vert\partial_{x_{i}}u\vert^{p_{i}-1}$
for any $i\in\{1,\ldots,n\}$. Then, taking the limit as $h\to0$
in the second term of (\ref{eq:weak1}), we can apply Lemma \ref{lem:Stek}
again. Thus we get\begin{align}\label{eq:limite2}
&\iint_{Q^{\tau}}\langle[A(x,t,Du)]_{h},D[([u]_{h}-k)_{+}\,\zeta]\rangle\,dx\,dt\,\longrightarrow\nonumber\\
&\,\,\,\,\,\,\,\iint_{Q^{\tau}}\langle A(x,t,Du),D(u-k)_{+}\rangle\,\zeta\,dx\,dt\,+\iint_{Q^{\tau}}\langle A(x,t,Du),D\zeta\rangle\,(u-k)_{+}\,dx\,dt\,.
\end{align}From (\ref{eq:weak1})$-$\eqref{eq:limite2}, we then obtain\begin{align}\label{eq:uguaglianza}
&\frac{1}{2}\int_{K_{\rho}}(u-k)_{+}^{2}\,\zeta(x,\tau)\,dx\,+\iint_{Q^{\tau}}\langle A(x,t,Du),D(u-k)_{+}\rangle\,\zeta\,dx\,dt\nonumber\\
&\,\,\,\,\,\,\,\leq\,\frac{1}{2}\iint_{Q^{\tau}}(u-k)_{+}^{2}\,\partial_t\zeta\,dx\,dt\,-\iint_{Q^{\tau}}\langle A(x,t,Du),D\zeta\rangle\,(u-k)_{+}\,dx\,dt\,.
\end{align}We now estimate\begin{align*}
\iint_{Q^{\tau}}\langle A(x,t,Du),D(u-k)_{+}\rangle\,\zeta\,dx\,dt\,&=\,\sum_{i=1}^{n}\iint_{Q^{\tau}\,\cap\,\{u\,>\,k\}}a_{i}(x,t)\,(\vert \partial_{x_{i}}u\vert-\delta_{i})_{+}^{p_{i}-1}\,\vert \partial_{x_{i}}u\vert\,\zeta\,dx\,dt\\
&\geq\,\Lambda^{-1}\,\sum_{i=1}^{n}\iint_{Q^{\tau}\,\cap\,\{u\,>\,k\}}(\vert \partial_{x_{i}}u\vert-\delta_{i})_{+}^{p_{i}}\,\zeta\,dx\,dt\,.
\end{align*}To deal with the last term in \eqref{eq:uguaglianza}, we set, for
each $i\in\{1,\ldots,n\}$, 
\[
\hat{\zeta}_{i}:=\prod_{j\neq i}\zeta_{j}^{p_{j}}(x_{j})\,.
\]
Using the definition of $\hat{\zeta}_{i}$, we get 
\[
\psi\,\hat{\zeta}_{i}\,\zeta_{i}^{p_{i}}\,=\,\zeta\,\,\,\,\,\,\,\,\,\,\,\,\,\,\,\mathrm{and}\,\,\,\,\,\,\,\,\,\,\,\,\,\,\,\psi\,\hat{\zeta}_{i}\,\vert\partial_{x_{i}}\zeta_{i}\vert^{p_{i}}\,=\,\vert\partial_{x_{i}}\zeta^{\frac{1}{p_{i}}}\vert^{p_{i}}\,.
\]
Therefore, applying Young's inequality with $\varepsilon_{i}>0$,
we have\begin{align*}
&-\iint_{Q^{\tau}}\langle A(x,t,Du),D\zeta\rangle\,(u-k)_{+}\,dx\,dt\\
&\,\,\,\leq\,\sum_{i=1}^{n}\,p_{i}\iint_{Q^{\tau}}a_{i}(x,t)\,(\vert \partial_{x_{i}}u\vert-\delta_{i})_{+}^{p_{i}-1}\,\psi\,\hat{\zeta}_{i}\,\zeta_{i}^{p_{i}-1}\,\vert\partial_{x_{i}}\zeta_{i}\vert\,(u-k)_{+}\,dx\,dt\\
&\,\,\,\leq\,\Lambda\,\sum_{i=1}^{n}\varepsilon_{i}(p_{i}-1)\iint_{Q^{\tau}\,\cap\,\{u\,>\,k\}}(\vert \partial_{x_{i}}u\vert-\delta_{i})_{+}^{p_{i}}\,\zeta\,dx\,dt\,+\,\Lambda\,\sum_{i=1}^{n}\frac{1}{\varepsilon_{i}^{p_{i}-1}}\iint_{Q^{\tau}}(u-k)_{+}^{p_{i}}\,\vert\partial_{x_{i}}\zeta^{\frac{1}{p_{i}}}\vert^{p_{i}}\,dx\,dt\,.
\end{align*}Choosing $\varepsilon_{i}=[2\,(p_{i}-1)\,\Lambda^{2}]^{-1}$ for every
$i\in\{1,...,n\}$, and collecting the three previous estimates, we
obtain \begin{align*}
&\int_{K_{\rho}}(u-k)_{+}^{2}\,\zeta(x,\tau)\,dx\,+\,\sum_{i=1}^{n}\iint_{Q^{\tau}\,\cap\,\{u\,>\,k\}}(\vert \partial_{x_{i}}u\vert-\delta_{i})_{+}^{p_{i}}\,\zeta\,dx\,dt\\
&\,\,\,\,\,\,\,\leq\,\mathcal{C}\iint_{Q^{\tau}}(u-k)_{+}^{2}\,\partial_t\zeta\,dx\,dt\,+\,\mathcal{C}\,\sum_{i=1}^{n}\iint_{Q^{\tau}}(u-k)_{+}^{p_{i}}\,\vert\partial_{x_{i}}\zeta^{\frac{1}{p_{i}}}\vert^{p_{i}}\,dx\,dt\,,
\end{align*}where $\mathcal{C}$ is a positive constant depending only on $\Lambda$
and $\max\,\{p_{1},\ldots,p_{n}\}$. Recalling that $\tau\in(-\rho,0)$
is arbitrary, from the above inequality we get\begin{align*}
&\underset{-\rho\,<\,\tau\,<\,0}{\sup}\,\int_{K_{\rho}}(u-k)_{+}^{2}\,\zeta(x,\tau)\,dx\,+\,\sum_{i=1}^{n}\iint_{Q_{\rho}}(\vert \partial_{x_{i}}u\vert-\delta_{i})_{+}^{p_{i}}\,\zeta\,\mathds{1}_{\{u\,>\,k\}}\,dx\,dt\\
&\,\,\,\,\,\,\,\leq\,\mathcal{C}\iint_{Q_{\rho}}(u-k)_{+}^{2}\,\partial_t\zeta\,dx\,dt\,+\,\mathcal{C}\,\sum_{i=1}^{n}\iint_{Q_{\rho}}(u-k)_{+}^{p_{i}}\,\vert\partial_{x_{i}}\zeta^{\frac{1}{p_{i}}}\vert^{p_{i}}\,dx\,dt\,.
\end{align*}We have thus proved estimate \eqref{eq:energy_est}.\\
Now observe that if $u$ is a local weak supersolution of (\ref{eq:equation}),
then $-u$ is a local weak subsolution. Hence, applying \eqref{eq:energy_est}
with $-u$ and $-k$ in place of $u$ and $k$, respectively, and
using the identity $(-v)_{+}=v_{-}$, we immediately obtain inequality
\eqref{eq:energy_est02}.\end{proof}
\begin{defn}[\textbf{De Giorgi classes $\mathcal{DG}^{+}(\mathbf{p},\{\delta_{i}\},\Omega_{T},\mathcal{C})$}]
\noindent  A measurable function $u:\Omega_{T}\to\mathbb{R}$ belongs
to the class $\mathcal{DG}^{+}(\mathbf{p},\{\delta_{i}\},\Omega_{T},\mathcal{C})$
if 
\[
u\,\in\,L_{loc}^{\mathbf{p}}(0,T;W_{loc}^{1,\mathbf{p}}(\Omega))\cap L_{loc}^{P}(\Omega_{T})
\]
and for all $k\in\mathbb{R}$ and all $\zeta$ as above, the function
$u$ satisfies inequality \eqref{eq:energy_est}. 
\end{defn}

\begin{defn}[\textbf{De Giorgi classes $\mathcal{DG}^{-}(\mathbf{p},\{\delta_{i}\},\Omega_{T},\mathcal{C})$}]
\noindent  A measurable function $u:\Omega_{T}\to\mathbb{R}$ belongs
to the class $\mathcal{DG}^{-}(\mathbf{p},\{\delta_{i}\},\Omega_{T},\mathcal{C})$
if 
\[
u\,\in\,L_{loc}^{\mathbf{p}}(0,T;W_{loc}^{1,\mathbf{p}}(\Omega))\cap L_{loc}^{P}(\Omega_{T})
\]
and for all $k\in\mathbb{R}$ and all $\zeta$ as above, the function
$u$ satisfies inequality \eqref{eq:energy_est02}. 
\end{defn}

\begin{defn}[\textbf{De Giorgi classes $\mathcal{DG}(\mathbf{p},\{\delta_{i}\},\Omega_{T},\mathcal{C})$}]
\label{def:DeGiorgi}A measurable function $u:\Omega_{T}\to\mathbb{R}$
belongs to the class $\mathcal{DG}(\mathbf{p},\{\delta_{i}\},\Omega_{T},\mathcal{C})$
if 
\[
u\,\in\,L_{loc}^{\mathbf{p}}(0,T;W_{loc}^{1,\mathbf{p}}(\Omega))\cap L_{loc}^{P}(\Omega_{T})
\]
and for all $k\in\mathbb{R}$ and all $\zeta$ as above, the function
$u$ satisfies both inequalities \eqref{eq:energy_est} and \eqref{eq:energy_est02}. 
\end{defn}

\noindent \begin{brem}\label{def:ossDG}By Proposition \ref{prop:PropEnergy},
under assumption (\ref{eq:coeff}), local weak subsolutions (resp.
supersolutions) to (\ref{eq:equation}) in the sense of Definition
\ref{def:weaksoldef} belong to $\mathcal{DG}^{+}(\mathbf{p},\{\delta_{i}\},\Omega_{T},\mathcal{C})$
(resp. $\mathcal{DG}^{-}(\mathbf{p},\{\delta_{i}\},\Omega_{T},\mathcal{C})$).
In particular, local weak solutions to (\ref{eq:equation}) in the
sense of Definition \ref{def:locweaksol2} belong to $\mathcal{DG}(\mathbf{p},\{\delta_{i}\},\Omega_{T},\mathcal{C})$.\end{brem}\medskip{}

\noindent $\hspace*{1em}$Starting from the above definitions, in
the following sections we prove local boundedness and the existence
of semicontinuous representatives for functions in $\mathcal{DG}^{\pm}(\mathbf{p},\{\delta_{i}\},\Omega_{T},\mathcal{C})$,
under suitable assumptions on the exponents $p_{i}$ appearing in
(\ref{eq:equation}).

\section{Local boundedness\label{sec:boundedness}}

\noindent $\hspace*{1em}$In this section we prove the local boundedness
of local weak solutions to equation (\ref{eq:equation}). Our proofs
are based on De Giorgi-type iterations combining the energy estimates
established in Proposition \ref{prop:PropEnergy} with the Sobolev
embedding from Remark \ref{SobEmb}. Throughout the section, we will
use the space-time cylinders defined in Section \ref{sec:prelim},
which turn out to be convenient in our setting.\\
$\hspace*{1em}$As mentioned in the introduction, there are two ranges
for $\overline{p}$ that require somewhat different arguments, and
in the range corresponding to small values of $\overline{p}$ we also
require some extra integrability of the local weak solutions. For
clarity, the two cases have been treated in separate subsections.

\subsection{The case $\overline{p}>\frac{2n}{n+2}$ }

\noindent $\hspace*{1em}$In this section we focus on the case in
which $\overline{p}$ satisfies the following lower bound: 
\begin{equation}
\overline{p}\,>\,\frac{2n}{n+2}\,.\label{eq:supercritical}
\end{equation}
The proof of local boundedness in Theorem \ref{thm:main} below is
somewhat different in the cases $\mathcal{P}\geq2$ and $\mathcal{P}<2$,
where $\mathcal{P}:=\max\,\{p_{1},\ldots,p_{n}\}$. In fact, in the
former case the condition (\ref{eq:supercritical}) is not explicitly
used in the argument. However, in this particular case, (\ref{eq:supercritical})
must necessarily be true due to the upper bound for the parameters
$p_{i}$ in (\ref{eq:range}).
\begin{thm}
\noindent \label{thm:main}Let $u\in\mathcal{DG}^{+}(\mathbf{p},\{\delta_{i}\},\Omega_{T},\mathcal{C})$,
where $\mathbf{p}=(p_{1},\ldots,p_{n})$ satisfies $(\ref{eq:range})$,
and assume that $(\ref{eq:supercritical})$ holds. Then $u$ is locally
bounded from above and, for every cylinder $Q_{\rho}(x_{0},t_{0})\Subset\Omega_{T}$
and every $\sigma\in(0,1)$, we have the explicit bound
\begin{equation}
\underset{Q_{\sigma\rho}(x_{0},t_{0})}{\mathrm{ess}\,\sup}\,u\,\leq\,\max\left\{ 1,\,\rho^{1/P},\,C\left[(1-\sigma)^{-\,\frac{\mathcal{P}\,(n\,+\,\overline{p})}{\overline{p}}}\tiltfiint_{Q_{\rho}(x_{0},t_{0})}u_{+}^{P}\,dx\,dt\right]^{\frac{\overline{p}}{\overline{p}\,(n\,+\,2)\,-\,nP}}\right\} ,\label{eq:supremum}
\end{equation}
where $\mathcal{P}:=\max\,\{p_{1},\ldots,p_{n}\}$, $P:=\max\,\{2,\mathcal{P}\}$
and $C$ is a positive constant depending only on $n$, $\Lambda$,
$\mathbf{p}$ and $\max\,\{\delta_{1},\ldots,\delta_{n}\}$. Similarly,
if $u\in\mathcal{DG}^{-}(\mathbf{p},\{\delta_{i}\},\Omega_{T},\mathcal{C})$
under assumptions $(\ref{eq:range})$ and $(\ref{eq:supercritical})$,
then $u$ is locally bounded from below and, for every $Q_{\rho}(x_{0},t_{0})\Subset\Omega_{T}$
and every $\sigma\in(0,1)$, we have 
\begin{equation}
\underset{Q_{\sigma\rho}(x_{0},t_{0})}{\mathrm{ess}\,\inf}\,u\,\geq\,-\,\max\left\{ 1,\,\rho^{1/P},\,C\left[(1-\sigma)^{-\,\frac{\mathcal{P}\,(n\,+\,\overline{p})}{\overline{p}}}\tiltfiint_{Q_{\rho}(x_{0},t_{0})}u_{-}^{P}\,dx\,dt\right]^{\frac{\overline{p}}{\overline{p}\,(n\,+\,2)\,-\,nP}}\right\} .\label{eq:infimum}
\end{equation}
\end{thm}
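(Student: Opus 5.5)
We run a De Giorgi iteration on shrinking anisotropic cylinders and increasing levels, working with $u\in\mathcal{DG}^{+}$; the statement for $\mathcal{DG}^{-}$ then follows by applying the same argument to $-u$.

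\textbf{Setup.} After translating, take $(x_0,t_0)=(0,0)$. For $j\in\mathbb{N}_0$ set
\[
\rho_j=\sigma\rho+(1-\sigma)\rho\,2^{-j},\qquad k_j=k\,(1-2^{-j-1}),
\]
where $k\ge\max\{1,\rho^{1/P}\}$ is a level to be fixed at the end. Let $Q_j=Q_{\rho_j}$. Choose cut-offs $\zeta_j$ of the form \eqref{eq:cut-off}, equal to $1$ on $Q_{j+1}$ and vanishing on $\partial_{\mathrm{par}}Q_j$, with
\[
|\partial_{x_i}\zeta_j^{1/p_i}|^{p_i}\lesssim \frac{2^{jp_i}}{(1-\sigma)^{p_i}\rho},\qquad 0\le\partial_t\zeta_j\lesssim \frac{2^j}{(1-\sigma)\rho}.
\]
Such cut-offs are possible because the $i$-th side of $K_{\rho_j}$ scales like $\rho_j^{1/p_i}$. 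We iterate the quantity
\[
Y_j=\tiltfiint_{Q_j}(u-k_j)_+^P\,dx\,dt.
\]

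\textbf{Energy step.} Apply \eqref{eq:energy_est} with level $k_{j+1}$ and cut-off $\zeta_j$. Since $k\ge1$, on $\{u>k_{j+1}\}$ we have $(u-k_{j})_+\ge k2^{-j-2}$. This lets us bound both $(u-k_{j+1})_+^2$ and $(u-k_{j+1})_+^{p_i}$ by $C\,2^{jP}k^{-?}(u-k_j)_+^P$, in the same way as in the proof of the Troisi-based iteration; each lower power is traded for $(u-k_j)_+^P$ times negative powers of $k2^{-j}$, using $k\ge1$. The factor $\rho^{-1}$ in front is harmless because the cylinders are intrinsic: $|Q_\rho|\sim\rho^{1+n/\overline p}$.

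\textbf{Handling the degeneracy.} The left side controls $(|\partial_{x_i}u|-\delta_i)_+^{p_i}$, not $|\partial_{x_i}u|^{p_i}$. We use
\[
|\partial_{x_i}(u-k_{j+1})_+|^{p_i}\le 2^{p_i}(|\partial_{x_i}u|-\delta_i)_+^{p_i}+2^{p_i}\delta_i^{p_i}\mathds 1_{\{u>k_{j+1}\}}.
\]
The extra term is bounded by $\max\delta_i^{P}\,|\{u>k_{j+1}\}\cap Q_j|$. By Chebyshev this is at most
\[
\frac{C\,2^{jP}}{k^P}\iint_{Q_j}(u-k_j)_+^P\,dx\,dt,
\]
and we need it to be comparable to the other terms, which carry the factor $\rho^{-1}$. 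This comparison is exactly where the lower bound $k\ge\rho^{1/P}$ (together with the dependence of $C$ on $\max\delta_i$) is used. We expect this to be the main obstacle: the inhomogeneous term must be absorbed with the correct powers of $\rho$ and $k$.

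\textbf{Embedding and recursion.} Put $w=(u-k_{j+1})_+\zeta_j^{\,\cdot}$ on $Q_{j+1}$. Combine H\"older's inequality with the parabolic embedding obtained from \eqref{eq:Troisi}: interpolate between $L^\infty_tL^2_x$ and $L^{\overline p}_tL^{\overline p^*}_x$ to reach $L^q$ with
\[
q=\overline p\,\frac{n+2}{n}.
\]
Since \eqref{eq:range} gives $P<q$ (this uses $\mathcal P<\overline p(1+2/n)$ and, when $P=2$, the condition \eqref{eq:supercritical}), we can estimate
\[
Y_{j+1}\le \bigl(\tiltfiint w^{q}\bigr)^{P/q}\,\Bigl(\frac{|\{u>k_{j+1}\}\cap Q_{j+1}|}{|Q_{j+1}|}\Bigr)^{1-P/q}.
\]
The measure factor is bounded again by $C\,2^{jP}k^{-P}Y_j$. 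We must also treat separately the mixed homogeneities $2$ and $\overline p$ that arise from the energy: using $k\ge1$ and $(u-k_j)_+\ge k2^{-j}$ on the relevant set, we reduce all of them to $Y_j$ raised to powers at least $1$.

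\textbf{Conclusion.} Collecting terms should give
\[
Y_{j+1}\le C\,b^{j}\,(1-\sigma)^{-\mathcal P(n+\overline p)/\overline p}\,k^{-\gamma}\,Y_j^{1+\mu},
\]
with $\mu=\frac{\overline p(n+2)-nP}{n\overline p}>0$ and an exponent $\gamma$ that makes the final relation consistent with \eqref{eq:supremum}. Lemma \ref{lem:Giusti} then gives $Y_j\to0$, hence $u\le k$ a.e.\ on $Q_{\sigma\rho}$, provided $Y_0\le \tiltfiint_{Q_\rho}u_+^P$ is small relative to $k$. Choosing $k$ as the maximum in \eqref{eq:supremum} ensures this, which proves the bound from above.
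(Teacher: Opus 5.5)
Your plan is the paper's proof. It uses shrinking anisotropic cylinders $Q_{\rho_j}$ with $\rho_j=\sigma\rho+(1-\sigma)\rho\,2^{-j}$, H\"older's inequality from $L^{P}$ to $L^{q}$ with $q=\overline p(n+2)/n$ paid for by a measure factor, and the parabolic embedding built from \eqref{eq:Troisi}. The $\delta_i$-term is absorbed through $|A_{j+1}|\le 2^{(j+2)P}k^{-P}\iint_{Q_j}(u-k_j)_+^P$ and $k^P\ge\rho$, exactly where the paper uses $k\ge\max\{1,\rho^{1/P}\}$. Your differences are all harmless: averaged $Y_j$, $k_0=k/2$, trading lower powers for $(u-k_j)_+^P$ via the level gap and $k\ge1$ instead of Young's inequality, and getting the $\mathcal{DG}^-$ case from $-u$.

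What is wrong is the final recursion, which you wrote down without computing it. After the level-gap trick every energy term is linear in $Y_j$, so there are no mixed homogeneities left to treat. The product of $\bigl(\sup_t\int(\cdot)^2\bigr)^{\overline p/n}$ with the gradient sum gives
$\fint_{Q_{j+1}}(u-k_{j+1})_+^q\le C\,b^j(1-\sigma)^{-\mathcal P(n+\overline p)/n}\,Y_j^{1+\overline p/n}$.
The powers of $\rho$ cancel because $|Q_j|^{\overline p/n}\le C\rho^{(n+\overline p)/n}$. The H\"older step then yields
\[
Y_{j+1}\le C\,b^{j}\,(1-\sigma)^{-\frac{\mathcal P(n+\overline p)P}{\overline p(n+2)}}\,k^{-P\left(1-\frac{P}{q}\right)}\,Y_j^{1+\frac{P}{n+2}},
\]
so $\mu=P/(n+2)$, not $(\overline p(n+2)-nP)/(n\overline p)$. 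With your displayed recursion, Lemma~\ref{lem:Giusti} would produce the factor $(1-\sigma)^{\mathcal P(n+\overline p)/(\overline p\mu)}$, which matches \eqref{eq:supremum} only if $\mu=1$.

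With the correct exponents, $C^{-1/\mu}b^{-1/\mu^2}$ is a constant times $(1-\sigma)^{\mathcal P(n+\overline p)/\overline p}\,k^{(\overline p(n+2)-nP)/\overline p}$. Since $Y_0\le\fint_{Q_\rho}u_+^P$, this gives exactly the threshold in \eqref{eq:supremum}. Note that the exponent $\overline p/(\overline p(n+2)-nP)$ arises as $\mu/\gamma$ with $\gamma=P(1-P/q)$, not from $\mu$ alone.
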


\noindent \begin{brem}\label{thm:Rk1}In view of Remark \ref{def:ossDG}
and Theorem \ref{thm:main}, under assumptions (\ref{eq:coeff}),
(\ref{eq:range}) and (\ref{eq:supercritical}), local weak subsolutions
to (\ref{eq:equation}) are locally bounded from above in $\Omega_{T}$,
whereas local weak supersolutions are locally bounded from below.\end{brem}\vspace{-2mm}

\noindent \begin{proof}[\bfseries{Proof of Theorem~\ref{thm:main}}]Let
us first assume that \textit{$u\in\mathcal{DG}^{+}(\mathbf{p},\{\delta_{i}\},\Omega_{T},\mathcal{C})$.}
After a translation, we may suppose that $(x_{0},t_{0})$ coincides
with the origin. For a fixed $\sigma\in(0,1)$, we consider the sequence
\begin{equation}
\rho_{j}:=\,\sigma\rho\,+\,\frac{(1-\sigma)}{2^{j}}\,\rho\,,\,\,\,\,\,\,\,\,\,\,j\in\mathbb{N}_{0}\,,\label{eq:radii}
\end{equation}
and the corresponding cylinders $\mathcal{Q}_{j}:=Q_{\rho_{j}}$.
From the definitions it follows that $\mathcal{Q}_{0}=Q_{\rho}$.
We also consider the family of boxes 
\[
\widetilde{\mathcal{Q}}_{j}:=\,Q_{\tilde{\rho}_{j}}\,,
\]
where, for $j\in\mathbb{N}_{0}$, 
\begin{equation}
\tilde{\rho}_{j}:=\,\frac{\rho_{j}+\rho_{j+1}}{2}\,=\,\sigma\rho\,+\,\frac{3(1-\sigma)}{2^{j+2}}\,\rho\,.\label{eq:radii2}
\end{equation}
For these boxes, we have the inclusions
\[
\mathcal{Q}_{j+1}\subset\widetilde{\mathcal{Q}}_{j}\subset\mathcal{Q}_{j}\,,\,\,\,\,\,\,\,\,\,\,j\in\mathbb{N}_{0}\,.
\]
We now introduce the sequence of increasing levels 
\begin{equation}
k_{j}:=\,k-\,\frac{k}{2^{j}}\,,\label{eq:levels}
\end{equation}
where $k\geq1$ is a number to be chosen later. To move forward, we
choose cut-off functions $\tilde{\zeta}_{i,j}\in C_{0}^{\infty}((-\rho_{j}^{\frac{1}{p_{i}}},\rho_{j}^{\frac{1}{p_{i}}});[0,1])$
such that 
\[
\tilde{\zeta}_{i,j}\equiv1\,\,\,\,\,\mathrm{on}\,\,\,(-\rho_{j+1}^{\frac{1}{p_{i}}},\rho_{j+1}^{\frac{1}{p_{i}}})\,\,\,\,\,\,\,\,\,\,\,\,\,\,\,\mathrm{and}\,\,\,\,\,\,\,\,\,\,\,\,\,\,\,\vert\tilde{\zeta}'_{i,j}\vert^{p_{i}}\,\leq\,\frac{c\,2^{j\mathcal{P}}}{(1-\sigma)^{\mathcal{P}}\rho}\,,
\]
where $c=c(\mathcal{P})>0$. Furthermore, we define
\[
\eta_{j}(x):=\,\prod_{i=1}^{n}\tilde{\zeta}_{i,j}^{p_{i}}(x_{i})\,.
\]

\noindent Similarly, we take $\psi_{j}\in C^{\infty}(\mathbb{R};[0,1])$
such that 
\[
\psi_{j}\equiv0\,\,\,\,\,\mathrm{on}\,\,(-\infty,-\tilde{\rho}_{j}]\,,\,\,\,\,\,\,\,\,\psi_{j}\equiv1\,\,\,\,\,\mathrm{on}\,\,\,[-\rho_{j+1},\infty)\,\,\,\,\,\,\,\,\,\,\,\mathrm{and}\,\,\,\,\,\,\,\,\,\,\,0\,\leq\,\psi'_{j}\,\leq\,\frac{c\,2^{j}}{(1-\sigma)\,\rho}\,.
\]
It follows that the smooth cut-off function 
\[
\tilde{\zeta}_{j}(x,t):=\,\psi_{j}(t)\,\eta_{j}(x)
\]

\noindent satisfies the following properties:
\begin{equation}
\begin{cases}
\begin{array}{c}
{\displaystyle 0\leq\tilde{\zeta}_{j}\leq1\,,\,\,\,\,\,\,\,\,\,\,\tilde{\zeta}_{j}\equiv1\,\,\,\,\,\,\mathrm{in}\,\,\,\mathcal{Q}_{j+1}\,,\,\,\,\,\,\,\,\,\,\,\tilde{\zeta}_{j}\equiv0\,\,\,\,\,\,\mathrm{on}\,\,\,\partial_{\mathrm{par}}\mathcal{Q}_{j}\,,\,\,\,\,\,\,\,\,\,\,\,\,\,\,\,\,\,\,\,\,\,\,\,\,\,\,\,\,\,\,\,\,\,\,\,\,\,\,\,\,\,\,\,\,\,\,\,\,\,\,\,\,\,\,}\vspace{4mm}\\
{\displaystyle 0\,\leq\,\partial_{t}\tilde{\zeta}_{j}\,\leq\,\frac{c\,2^{j}}{(1-\sigma)\,\rho}\,\,,\,\,\,\,\,\,\,\,\,\,\vert\partial_{x_{i}}\tilde{\zeta}_{j}^{\frac{1}{p_{i}}}\vert^{p_{i}}\,\leq\,\frac{c\,2^{j\mathcal{P}}}{(1-\sigma)^{\mathcal{P}}\rho}\,\,\,\,\,\,\,\,\mathrm{for\,\,every}\,\,i\in\{1,\ldots,n\}.}
\end{array}\end{cases}\label{eq:properties}
\end{equation}
Details on the explicit construction of such a cut-off function are
provided in the \hyperref[sec:appendice]{Appendix}. Now we rewrite
estimate \eqref{eq:energy_est} with $\rho_{j}$, $k_{j+1}$ and $\tilde{\zeta}_{j}$
in place of $\rho$, $k$ and $\zeta,$ respectively. We thus obtain\begin{align}\label{eq:Caccioppoli1}
&\underset{-\rho_{j}\,<\,\tau\,<\,0}{\sup}\,\int_{K_{\rho_{j}}}(u-k_{j+1})_{+}^{2}\,\tilde{\zeta}_{j}(x,\tau)\,dx\,+\,\sum_{i=1}^{n}\iint_{\mathcal{Q}_{j}\,\cap\,\{u\,>\,k_{j+1}\}}(\vert \partial_{x_{i}}u\vert-\delta_{i})_{+}^{p_{i}}\,\tilde{\zeta}_{j}\,dx\,dt\nonumber\\
&\,\,\,\,\,\,\,\leq\,\frac{C_0\,2^{j}}{(1-\sigma)\,\rho}\iint_{\mathcal{Q}_{j}}(u-k_{j+1})_{+}^{2}\,dx\,dt\,+\,\frac{C_0\,2^{j\mathcal{P}}}{(1-\sigma)^{\mathcal{P}}\rho}\,\sum_{i=1}^{n}\iint_{\mathcal{Q}_{j}}(u-k_{j+1})_{+}^{p_{i}}\,dx\,dt\,,
\end{align}where $C_{0}$ is a positive constant depending only on $\Lambda$
and $\mathcal{P}$. In addition, we set 
\[
q:=\,\overline{p}\left(1+\frac{2}{n}\right).
\]
In the case $\mathcal{P}\geq2$, the upper bound of the exponents
$p_{i}$ in (\ref{eq:range}) ensures that $\frac{q}{P}>1$, where
$P:=\max\,\{2,\mathcal{P}\}$. In the case $\mathcal{P}<2$, we can
use the lower bound (\ref{eq:supercritical}) to deduce that $\frac{q}{P}>1$.\\
$\hspace*{1em}$At this point of the proof, we define the sequence
\begin{equation}
Y_{j}:=\iint_{\mathcal{Q}_{j}}(u-k_{j})_{+}^{P}\,dx\,dt\,,\,\,\,\,\,\,\,\,\,\,j\in\mathbb{N}_{0}\,.\label{eq:Y_j}
\end{equation}
Note that $Y_{j}$ is finite for every $j\in\mathbb{N}_{0}$, since
$u\in L_{loc}^{P}(\Omega_{T})$. We may use Hölder's inequality to
estimate 
\begin{equation}
Y_{j+1}\,\leq\left[\iint_{\mathcal{Q}_{j+1}}(u-k_{j+1})_{+}^{q}\,dx\,dt\right]^{\frac{P}{q}}\vert A_{j+1}\vert^{1\,-\,\frac{P}{q}}\,,\label{eq:Yj+1}
\end{equation}
where we have set 
\begin{equation}
A_{j+1}:=\,\left\{ (x,t)\in\mathcal{Q}_{j}:u(x,t)>k_{j+1}\right\} .\label{eq:superlevel}
\end{equation}
In the following calculation, we estimate the integral in (\ref{eq:Yj+1})
by applying Hölder's inequality with $H:=n/(n-\overline{p})$ in the
integral over space. Then, we estimate one of the integrals in space
by its supremum for all times in the interval $(-\rho_{j+1},0)$,
and introduce the functions $\eta_{j}$ and $\psi_{j}$ while expanding
the set of integration, which allows us to use (\ref{eq:Troisi}).
All in all, the calculation takes the form\begin{align}\label{eq:big_calculation}
&\iint_{\mathcal{Q}_{j+1}}(u-k_{j+1})_{+}^{q}\,dx\,dt\,\le\int_{-\rho_{j+1}}^{0}\left[\int_{K_{\rho_{j+1}}}(u-k_{j+1})_{+}^{\frac{2\overline{p}}{n}\,H'}\,dx\right]^{\frac{1}{H'}}\left[\int_{K_{\rho_{j+1}}}(u-k_{j+1})_{+}^{\overline{p}H}\,dx\right]^{\frac{1}{H}}dt\nonumber\\
&\,\,\,\,\,\,\,=\int_{-\rho_{j+1}}^{0}\left[\int_{K_{\rho_{j+1}}}(u-k_{j+1})_{+}^{2}\,dx\right]^{\frac{\overline{p}}{n}}\left[\int_{K_{\rho_{j+1}}}(u-k_{j+1})_{+}^{\overline{p}^{*}}\,dx\right]^{\frac{\overline{p}}{\overline{p}^{*}}}dt\nonumber\\
&\,\,\,\,\,\,\,\leq\left[\underset{-\rho_{j+1}\,<\,\tau\,<\,0}{\sup}\int_{K_{\rho_{j+1}}}(u(x,\tau)-k_{j+1})_{+}^{2}\,dx\right]^{\frac{\overline{p}}{n}}\int_{-\rho_{j+1}}^{0}\left[\int_{K_{\rho_{j+1}}}(u-k_{j+1})_{+}^{\overline{p}^{*}}\,dx\right]^{\frac{\overline{p}}{\overline{p}^{*}}}dt\nonumber\\
&\,\,\,\,\,\,\,\leq\left[\underset{-\rho_{j}\,<\,\tau\,<\,0}{\sup}\int_{K_{\rho_{j}}}(u-k_{j+1})_{+}^{2}\,\tilde{\zeta}_{j}(x,\tau)\,dx\right]^{\frac{\overline{p}}{n}}\int_{-\rho_{j}}^{0}\psi_{j}(t)\left[\int_{K_{\rho_{j}}}[(u-k_{j+1})_{+}\,\eta_{j}]^{\overline{p}^{*}}\,dx\right]^{\frac{\overline{p}}{\overline{p}^{*}}}dt\nonumber\\
&\,\,\,\,\,\,\,\leq\,c_{0}\left[\underset{-\rho_{j}\,<\,\tau\,<\,0}{\sup}\int_{K_{\rho_{j}}}(u-k_{j+1})_{+}^{2}\,\tilde{\zeta}_{j}(x,\tau)\,dx\right]^{\frac{\overline{p}}{n}}\left[\sum_{i=1}^{n}\iint_{\mathcal{Q}_{j}}\vert\partial_{x_{i}}[(u-k_{j+1})_{+}\,\eta_{j}]\vert^{p_{i}}\,\psi_{j}\,dx\,dt\right],
\end{align} where $c_{0}=c_{0}(n,\mathbf{p})>0$. Now we estimate the last term
in \eqref{eq:big_calculation}. Setting 
\[
\delta:=\,\max\,\{\delta_{i}:i=1,\ldots,n\},
\]
applying the definition of $A_{j+1}$ in (\ref{eq:superlevel}), and
using the bounds for the functions $\eta_{j}$, $\psi_{j}$ and their
derivatives, we obtain\begin{align}\label{eq:big2}
&\sum_{i=1}^{n}\iint_{\mathcal{Q}_{j}}\vert\partial_{x_{i}}[(u-k_{j+1})_{+}\,\eta_{j}]\vert^{p_{i}}\,\psi_{j}\,dx\,dt\,=\,\sum_{i=1}^{n}\iint_{A_{j+1}}\vert (\partial_{x_{i}}u)\,\eta_{j}\,+\,(u-k_{j+1})_{+}\,\partial_{x_{i}}\eta_{j}\vert^{p_{i}}\,\psi_{j}\,dx\,dt\nonumber\\
&\leq\,c\left[\sum_{i=1}^{n}\iint_{A_{j+1}}\vert \partial_{x_{i}}u\vert^{p_{i}}\,\tilde{\zeta}_{j}\,dx\,dt\,+\,\sum_{i=1}^{n}\iint_{A_{j+1}}(u-k_{j+1})_{+}^{p_{i}}\,\vert\partial_{x_{i}}\eta_{j}\vert^{p_{i}}\,\psi_{j}\,dx\,dt\right]\nonumber\\
&\leq\,c\left[\sum_{i=1}^{n}\iint_{A_{j+1}}[(\vert \partial_{x_{i}}u\vert-\delta_{i})_{+}+\delta_{i}]^{p_{i}}\,\tilde{\zeta}_{j}\,dx\,dt\,+\,\frac{2^{j\mathcal{P}}}{(1-\sigma)^{\mathcal{P}}\,\rho}\,\sum_{i=1}^{n}\iint_{\mathcal{Q}_{j}}(u-k_{j+1})_{+}^{p_{i}}\,dx\,dt\right]\nonumber\\
&\leq\,c\left[\sum_{i=1}^{n}\iint_{A_{j+1}}(\vert \partial_{x_{i}}u\vert-\delta_{i})_{+}^{p_{i}}\,\tilde{\zeta}_{j}\,dx\,dt\,+\,n\,(\delta+1)^{\mathcal{P}}\,\vert A_{j+1}\vert\,+\,\frac{2^{j\mathcal{P}}}{(1-\sigma)^{\mathcal{P}}\,\rho}\,\sum_{i=1}^{n}\iint_{\mathcal{Q}_{j}}(u-k_{j+1})_{+}^{p_{i}}\,dx\,dt\right],
\end{align}where $c=c(\mathcal{P})>0$. Combining \eqref{eq:big_calculation}
and \eqref{eq:big2}, majorizing the supremum in \eqref{eq:big_calculation}
and the final term in \eqref{eq:big2} by means of the energy estimate
\eqref{eq:Caccioppoli1}, and using that $\frac{1}{1-\sigma}>1$,
we ultimately get\begin{align}\label{eq:big_bis}
&\iint_{\mathcal{Q}_{j+1}}(u-k_{j+1})_{+}^{q}\,dx\,dt\nonumber\\
&\,\,\,\,\,\,\,\leq\,\frac{C_{1}\,2^{j\,\frac{\mathcal{P}\,(n\,+\,\overline{p})}{n}}}{(1-\sigma)^{\frac{\mathcal{P}\,(n\,+\,\overline{p})}{n}}}\left[\frac{1}{\rho}\iint_{\mathcal{Q}_{j}}(u-k_{j+1})_{+}^{2}\,dx\,dt\,+\,\frac{1}{\rho}\,\sum_{i=1}^{n}\iint_{\mathcal{Q}_{j}}(u-k_{j+1})_{+}^{p_{i}}\,dx\,dt\,+\,\vert A_{j+1}\vert\,\right]^{\frac{n\,+\,\overline{p}}{n}},
\end{align} for a positive constant $C_{1}$ depending only on $n$, $\Lambda$,
$\mathbf{p}$ and $\delta$. Invoking again the definition of $A_{j+1}$,
recalling that $k_{j}<k$, $k\geq1$ and $p_{i}\leq\mathcal{P}\leq P$,
and applying Young's inequality, we can handle the last sum as follows
\begin{align}\label{eq:big_bis002}
&\sum_{i=1}^{n}\iint_{\mathcal{Q}_{j}}(u-k_{j+1})_{+}^{p_{i}}\,dx\,dt\,\leq\,\sum_{i=1}^{n}\iint_{A_{j+1}}u^{p_{i}}\,dx\,dt\,\leq\,\sum_{i=1}^{n}\iint_{A_{j+1}}[(u-k_{j})_{+}+k_{j}]^{p_{i}}\,dx\,dt\nonumber\\
&\,\,\,\,\,\,\,\leq\,2^{\mathcal{P}-1}\left[\sum_{i=1}^{n}\iint_{A_{j+1}}(u-k_{j})_{+}^{p_{i}}\,dx\,dt\,+\,n\,k^{P}\,\vert A_{j+1}\vert\right]\nonumber\\
&\,\,\,\,\,\,\,\leq\,2^{\mathcal{P}-1}\,n\left[\iint_{A_{j+1}}(u-k_{j})_{+}^{P}\,dx\,dt\,+\,(1+k^{P})\,\vert A_{j+1}\vert\right]\nonumber\\
&\,\,\,\,\,\,\,\leq\,2^{\mathcal{P}}\,n\left[Y_{j}\,+\,k^{P}\,\vert A_{j+1}\vert\right],
\end{align}where, in the last line, we have used the definition of $Y_{j}$ in
(\ref{eq:Y_j}). Similarly, since $P\geq2$, the first integral on
the right-hand side of \eqref{eq:big_bis} can also be treated using
Young's inequality: 
\[
\iint_{\mathcal{Q}_{j}}(u-k_{j+1})_{+}^{2}\,dx\,dt\,\leq\iint_{A_{j+1}}(u-k_{j+1})_{+}^{P}\,dx\,dt\,+\,\vert A_{j+1}\vert\,\leq\,Y_{j}\,+\,\vert A_{j+1}\vert\,,
\]
where, in the last inequality, we have used the fact that $k_{j}<k_{j+1}$.
Collecting the last three estimates, redefining the constant $C_{1}$
and recalling that $k\geq1$, we obtain\begin{align*}
\iint_{\mathcal{Q}_{j+1}}(u-k_{j+1})_{+}^{q}\,dx\,dt\,&\leq\,\frac{C_{1}\,2^{j\,\frac{\mathcal{P}\,(n\,+\,\overline{p})}{n}}}{(1-\sigma)^{\frac{\mathcal{P}\,(n\,+\,\overline{p})}{n}}}\left[\frac{Y_{j}}{\rho}\,+\left(1+\,\frac{1+k^{P}}{\rho}\right)\vert A_{j+1}\vert\right]^{\frac{n\,+\,\overline{p}}{n}}\nonumber\\
&\leq\,\frac{C_{1}\,2^{j\,\frac{\mathcal{P}\,(n\,+\,\overline{p})}{n}}}{(1-\sigma)^{\frac{\mathcal{P}\,(n\,+\,\overline{p})}{n}}}\left[\frac{Y_{j}}{\rho}\,+\left(1+\,\frac{2\,k^{P}}{\rho}\right)\vert A_{j+1}\vert\right]^{\frac{n\,+\,\overline{p}}{n}}.
\end{align*}Without loss of generality, we can now assume that $k\geq\max\,\{1,\rho^{1/P}\}$.
It then follows from the previous inequality that
\begin{equation}
\iint_{\mathcal{Q}_{j+1}}(u-k_{j+1})_{+}^{q}\,dx\,dt\,\leq\,\frac{C_{1}\,2^{j\,\frac{\mathcal{P}\,(n\,+\,\overline{p})}{n}}}{(1-\sigma)^{\frac{\mathcal{P}\,(n\,+\,\overline{p})}{n}}\,\rho^{\frac{n\,+\,\overline{p}}{n}}}\left(Y_{j}\,+\,k^{P}\,\vert A_{j+1}\vert\right)^{\frac{n\,+\,\overline{p}}{n}},\label{eq:clef}
\end{equation}
up to a different constant $C_{1}=C_{1}(n,\Lambda,\mathbf{p},\delta)>0$.
At this point, we note that 
\begin{equation}
\vert A_{j+1}\vert\,=\,\iint_{A_{j+1}}\frac{(k_{j+1}-k_{j})^{P}}{(k_{j+1}-k_{j})^{P}}\,dx\,dt\,\leq\,\iint_{A_{j+1}}\frac{(u-k_{j})^{P}}{(k_{j+1}-k_{j})^{P}}\,dx\,dt\,\leq\,\frac{2^{(j+1)P}}{k^{P}}\,Y_{j}\,.\label{eq:measure}
\end{equation}
Combining this estimate with (\ref{eq:clef}) and using that $1\leq\mathcal{P}\leq P$,
we get\begin{align}\label{eq:clef02}
\iint_{\mathcal{Q}_{j+1}}(u-k_{j+1})_{+}^{q}\,dx\,dt\,&\leq\,\frac{C_{1}\,2^{j\,\frac{\mathcal{P}\,(n\,+\,\overline{p})}{n}}}{(1-\sigma)^{\frac{\mathcal{P}\,(n\,+\,\overline{p})}{n}}\,\rho^{\frac{n\,+\,\overline{p}}{n}}}\left(1+2^{(j+1)P}\right)^{\frac{n\,+\,\overline{p}}{n}}\,Y_{j}^{1\,+\,\frac{\overline{p}}{n}}\nonumber\\
&\leq\,\frac{C_{1}\,2^{j\,\frac{P\,(n\,+\,\overline{p})}{n}}\,2^{(j+2)\,\frac{P\,(n\,+\,\overline{p})}{n}}}{(1-\sigma)^{\frac{\mathcal{P}\,(n\,+\,\overline{p})}{n}}\,\rho^{\frac{n\,+\,\overline{p}}{n}}}\,\,Y_{j}^{1\,+\,\frac{\overline{p}}{n}}\nonumber\\
&\leq\,\frac{C_{1}\,4^{j\,\frac{P\,(n\,+\,\overline{p})}{n}}}{(1-\sigma)^{\frac{\mathcal{P}\,(n\,+\,\overline{p})}{n}}\,\rho^{\frac{n\,+\,\overline{p}}{n}}}\,\,Y_{j}^{1\,+\,\frac{\overline{p}}{n}}\,.
\end{align} Putting together (\ref{eq:Yj+1}), (\ref{eq:measure}) and \eqref{eq:clef02},
and recalling that $q:=\,\overline{p}\left(1+\frac{2}{n}\right)$,
we end up with\begin{align}\label{eq:dis_chiave}
Y_{j+1}\,&\le\,\frac{C_{2}\,4^{j\,\frac{P^{2}\,(n\,+\,\overline{p})}{nq}}}{(1-\sigma)^{\frac{\mathcal{P}\,(n\,+\,\overline{p})\,P}{nq}}\,\rho^{\frac{P\,(n\,+\,\overline{p})}{nq}}}\,\,\frac{2^{(j+1)P\left(1\,-\,\frac{P}{q}\right)}}{k^{P\,-\,\frac{P^{2}}{q}}}\,\,Y_{j}^{1\,+\,\frac{P\,\overline{p}}{nq}}\nonumber\\
&\leq\,\frac{C_{3}\,4^{jP\left[\frac{P\,(n\,+\,\overline{p})}{nq}\,+\,1\,-\,\frac{P}{q}\right]}}{(1-\sigma)^{\frac{\mathcal{P}\,(n\,+\,\overline{p})\,P}{nq}}\,\rho^{\frac{P\,(n\,+\,\overline{p})}{nq}}}\,\,k^{\frac{P^{2}}{q}\,-\,P}\,\,Y_{j}^{1\,+\,\frac{P\,\overline{p}}{nq}}\nonumber\\
&=\,\frac{C_{3}\,b^{j}}{\left[(1-\sigma)^{\mathcal{P}}\rho\right]^{\frac{P\,(n\,+\,\overline{p})}{\overline{p}\,(n\,+\,2)}}}\,\,\mathcal{A}_{k}\,\,Y_{j}^{1\,+\,\frac{P}{n\,+\,2}}\,,
\end{align} where $C_{2}$ and $C_{3}$ are positive constants depending only
$n$, $\Lambda$, $\mathbf{p}$ and $\delta$, 
\[
b:=\,4^{P\,+\,\frac{P^{2}}{n\,+\,2}}\,,
\]
\[
\mathcal{A}_{k}:=\,k^{\frac{P^{2}n}{\overline{p}\,(n\,+\,2)}\,-\,P}.
\]
We are thus in a position to apply the result in Lemma \ref{lem:Giusti}
with the choices 
\[
C\,=\,C_{3}\,\mathcal{A}_{k}\left[(1-\sigma)^{\mathcal{P}}\rho\right]^{-\,\frac{P\,(n\,+\,\overline{p})}{\overline{p}\,(n\,+\,2)}}\,,\,\,\,\,\,\,\,\,\,\,\,\,\mu\,=\,\frac{P}{n+2}\,,
\]
where $C_{3}$ is the constant from \eqref{eq:dis_chiave}, provided
that 
\begin{equation}
Y_{0}\,\leq\,C^{-\,\frac{1}{\mu}}\,b^{-\,\frac{1}{\mu^{2}}}\,=\,C_{3}\left[(1-\sigma)^{\mathcal{P}}\rho\right]^{\frac{n\,+\,\overline{p}}{\overline{p}}}k^{n\,+\,2\,-\,\frac{nP}{\overline{p}}}\,.\label{eq:Y0}
\end{equation}
Since 
\[
Y_{0}:=\iint_{Q_{\rho}}u_{+}^{P}\,dx\,dt\,\,\,\,\,\,\,\,\,\,\,\,\,\,\,\,\mathrm{and}\,\,\,\,\,\,\,\,\,\,\,\,\,\,\,\,\overline{p}\,(n+2)-nP\,=\,(q-P)\,n\,>\,0\,,
\]
and since $\vert Q_{\rho}\vert=2^{n}\,\rho^{\frac{n\,+\,\overline{p}}{\overline{p}}}$,
we see that (\ref{eq:Y0}) is satisfied provided that 
\[
k\,\ge\,C_{4}\left[(1-\sigma)^{-\,\frac{\mathcal{P}\,(n\,+\,\overline{p})}{\overline{p}}}\tiltfiint_{Q_{\rho}}u_{+}^{P}\,dx\,dt\right]^{\frac{\overline{p}}{\overline{p}\,(n\,+\,2)\,-\,nP}}=:\mathcal{B}\,,
\]
for a positive constant $C_{4}$ depending only $n$, $\Lambda$,
$\mathbf{p}$ and $\max\,\{\delta_{1},\ldots,\delta_{n}\}$. Therefore,
choosing $k=\max\,\{1,\rho^{1/P},\mathcal{B}\}$, by Lemma \ref{lem:Giusti}
we have
\[
\iint_{Q_{\sigma\rho}}(u-k)_{+}^{P}\,dx\,dt\,\le\,Y_{j}\rightarrow0\,\,\,\,\,\,\,\,\mathrm{as}\,\,j\to\infty\,.
\]
This implies that the integrand above is zero almost everywhere in
$Q_{\sigma\rho}$, and hence 
\[
\underset{Q_{\sigma\rho}}{\mathrm{ess}\,\sup}\,\,u\,\leq\,\max\left\{ 1,\,\rho^{1/P},\,C_{4}\left[(1-\sigma)^{-\,\frac{\mathcal{P}\,(n\,+\,\overline{p})}{\overline{p}}}\tiltfiint_{Q_{\rho}}u_{+}^{P}\,dx\,dt\right]^{\frac{\overline{p}}{\overline{p}\,(n\,+\,2)\,-\,nP}}\right\} .
\]
We have thus proved estimate (\ref{eq:supremum}).\\
$\hspace*{1em}$Finally, if $u\in\mathcal{DG}^{-}(\mathbf{p},\{\delta_{i}\},\Omega_{T},\mathcal{C})$,
the previous arguments, with minor modifications, yield the lower
bound (\ref{eq:infimum}). We leave the details to the reader.\end{proof}

\noindent $\hspace*{1em}$As an easy consequence of Definition \ref{def:DeGiorgi},
Remark \ref{def:ossDG} and Theorem \ref{thm:main}, we obtain the
following local boundedness result.
\begin{cor}
\noindent \label{cor:corol1}Let $\mathbf{p}=(p_{1},\ldots,p_{n})$
satisfy $(\ref{eq:range})$. Assume that $(\ref{eq:supercritical})$
holds and that $u\in\mathcal{DG}(\mathbf{p},\{\delta_{i}\},\Omega_{T},\mathcal{C})$.
Then $u$ is locally bounded and, for every cylinder $Q_{\rho}(x_{0},t_{0})\Subset\Omega_{T}$
and every $\sigma\in(0,1)$, we have
\begin{equation}
\underset{Q_{\sigma\rho}(x_{0},t_{0})}{\mathrm{ess}\,\sup}\,\vert u\vert\,\leq\,\max\left\{ 1,\,\rho^{1/P},\,C\left[(1-\sigma)^{-\,\frac{\mathcal{P}\,(n\,+\,\overline{p})}{\overline{p}}}\tiltfiint_{Q_{\rho}(x_{0},t_{0})}\vert u\vert^{P}\,dx\,dt\right]^{\frac{\overline{p}}{\overline{p}\,(n\,+\,2)\,-\,nP}}\right\} ,\label{eq:sup2}
\end{equation}
where $\mathcal{P}:=\max\,\{p_{1},\ldots,p_{n}\}$, $P:=\max\,\{2,\mathcal{P}\}$
and $C$ is a positive constant depending only on $n$, $\Lambda$,
$\mathbf{p}$ and $\max\,\{\delta_{1},\ldots,\delta_{n}\}$. In particular,
if $u$ is a local weak solution of $(\ref{eq:equation})$ under assumptions
$(\ref{eq:coeff})$, $(\ref{eq:range})$ and $(\ref{eq:supercritical})$,
then $u$ belongs to $L_{loc}^{\infty}(\Omega_{T})$ and satisfies
the estimate $(\ref{eq:sup2})$.
\end{cor}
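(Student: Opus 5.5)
The plan is to derive Corollary \ref{cor:corol1} directly from Theorem \ref{thm:main}, applied once to $u$ and once to the lower bound. By Definition \ref{def:DeGiorgi}, $u\in\mathcal{DG}(\mathbf{p},\{\delta_{i}\},\Omega_{T},\mathcal{C})$ means that $u$ satisfies both \eqref{eq:energy_est} and \eqref{eq:energy_est02} for every level $k$ and every admissible cut-off $\zeta$. Hence $u$ lies in $\mathcal{DG}^{+}(\mathbf{p},\{\delta_{i}\},\Omega_{T},\mathcal{C})$ and in $\mathcal{DG}^{-}(\mathbf{p},\{\delta_{i}\},\Omega_{T},\mathcal{C})$ at the same time.

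Fix $Q_{\rho}(x_{0},t_{0})\Subset\Omega_{T}$ and $\sigma\in(0,1)$. Under \eqref{eq:range} and \eqref{eq:supercritical}, Theorem \ref{thm:main} gives the upper bound \eqref{eq:supremum} for $\mathrm{ess\,sup}_{Q_{\sigma\rho}}u$ and the lower bound \eqref{eq:infimum} for $\mathrm{ess\,inf}_{Q_{\sigma\rho}}u$. Let $C$ denote the larger of the two constants, which still depends only on $n,\Lambda,\mathbf{p}$ and $\max_{i}\delta_{i}$.

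Next we compare the integrands. Pointwise we have $u_{+}^{P}\le|u|^{P}$ and $u_{-}^{P}\le|u|^{P}$. The exponent $\frac{\overline{p}}{\overline{p}(n+2)-nP}$ is positive, because $\overline{p}(n+2)-nP=n(q-P)>0$ as shown in the proof of the theorem. So the map $s\mapsto \max\{1,\rho^{1/P},C[(1-\sigma)^{-\mathcal{P}(n+\overline{p})/\overline{p}}s]^{\frac{\overline{p}}{\overline{p}(n+2)-nP}}\}$ is nondecreasing. Replacing $u_{\pm}^{P}$ by $|u|^{P}$ in \eqref{eq:supremum} and \eqref{eq:infimum} therefore produces the same right-hand side $M$ in both estimates. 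We obtain $-M\le u\le M$ a.e. in $Q_{\sigma\rho}$, which is \eqref{eq:sup2}.

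Local boundedness follows because every compact subset of $\Omega_{T}$ is covered by finitely many cylinders of the form $Q_{\sigma\rho}$. The right-hand side of \eqref{eq:sup2} is finite on each of them, since $u\in L^{P}_{loc}(\Omega_{T})$. For the final statement, Remark \ref{def:ossDG} (that is, Proposition \ref{prop:PropEnergy} under \eqref{eq:coeff}) places every local weak solution in $\mathcal{DG}(\mathbf{p},\{\delta_{i}\},\Omega_{T},\mathcal{C})$, so the first part applies to it. There is no real obstacle here. The only points that need care are the positivity of the exponent, which gives the monotonicity used above, and taking a single common constant for the two one-sided bounds.
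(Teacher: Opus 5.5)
Your proposal is correct and follows the paper's route. The paper states the corollary as an immediate consequence of Definition~\ref{def:DeGiorgi} (so $u$ lies in both $\mathcal{DG}^{+}$ and $\mathcal{DG}^{-}$), Theorem~\ref{thm:main} applied for both one-sided bounds, and Remark~\ref{def:ossDG} for weak solutions; your checks (positivity of the exponent via $\overline{p}(n+2)-nP>0$, which justifies replacing $u_{\pm}^{P}$ by $|u|^{P}$, and taking a common constant) are exactly the details the paper leaves implicit.
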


\noindent $\hspace*{1em}$By iterating the estimate of the previous
corollary, we can actually get a better bound for the essential supremum.
We first note that, by the upper bound for the exponents $p_{i}$
in (\ref{eq:range}) and inequality (\ref{eq:supercritical}), we
have 
\[
P\,>\,P\,-\,\frac{n}{\overline{p}}\left[\overline{p}\left(1+\frac{2}{n}\right)-P\right]=:\,\omega(n,\mathbf{p})\,.
\]
Moreover, from the definitions of $\omega(n,\mathbf{p})$ and $P$,
we see that 
\[
\omega(n,\mathbf{p})\,=\,P-2+n\left(\frac{P}{\overline{p}}\,-1\right)\,\geq\,0\,.
\]
Now we are in a position to prove the following result.
\begin{thm}
\label{thm:main2-1}Under the assumptions of Corollary \ref{cor:corol1},
for every cylinder $Q_{2\rho}(x_{0},t_{0})\Subset\Omega_{T}$ and
every $\nu\in(\omega(n,\mathbf{p}),P]$ we have
\begin{equation}
\underset{Q_{\rho}(x_{0},t_{0})}{\mathrm{ess}\,\sup}\,\vert u\vert\,\leq\,C\left[\left(\tiltfiint_{Q_{2\rho}(x_{0},t_{0})}\vert u\vert^{\nu}\,dx\,dt\right)^{\frac{1}{\nu\,-\,\omega(n,\mathbf{p})}}+1+\rho^{1/P}\right],\label{eq:BOUND2-1}
\end{equation}
for a positive constant $C$ depending only on $n$, $\Lambda$, $\mathbf{p}$,
$\nu$ and $\max\,\{\delta_{1},\ldots,\delta_{n}\}$.
\end{thm}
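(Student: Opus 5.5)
We will derive \eqref{eq:BOUND2-1} from Corollary \ref{cor:corol1} by the standard interpolation-and-iteration argument, which lowers the integrability exponent from $P$ to any $\nu>\omega(n,\mathbf{p})$. After translating, assume $(x_0,t_0)=(0,0)$. For $\rho\le s<r\le 2\rho$, note that $Q_s\subset Q_r$ are concentric cylinders of the family $Q_\rho$. The first step is to apply \eqref{eq:sup2} on $Q_r$ with $\sigma=s/r$, after checking that the relevant sub-cylinder is $Q_s$. Since $1-\sigma=(r-s)/r\ge (r-s)/(2\rho)$ and $|Q_r|\simeq r^{(n+\overline{p})/\overline{p}}\simeq\rho^{(n+\overline{p})/\overline{p}}$, this should give, with $M(s):=\mathrm{ess\,sup}_{Q_s}|u|$ and $\theta:=\frac{\overline{p}}{\overline{p}(n+2)-nP}$,
\[
M(s)\le 1+(2\rho)^{1/P}+C\Bigl[\Bigl(\tfrac{\rho}{r-s}\Bigr)^{\frac{\mathcal{P}(n+\overline{p})}{\overline{p}}}\tiltfiint_{Q_{2\rho}}|u|^{P}\,dx\,dt\Bigr]^{\theta}.
\]
A subtlety is whether the cylinders $Q_{\sigma r}$ (which scale anisotropically as $(\sigma r)^{1/p_i}$) coincide with $Q_s$. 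They do, because $Q_{\sigma r}=Q_s$ exactly by definition.

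The second step handles the case $\nu<P$; for $\nu=P$ one takes $s=\rho$, $r=2\rho$ directly, using $\omega<P$ and $P\theta=\frac{P}{P-\omega}$. For $\nu<P$ we split $|u|^P\le M(r)^{P-\nu}|u|^\nu$ on $Q_r$. The bracket then becomes $M(r)^{(P-\nu)\theta}$ times $[(\rho/(r-s))^{\gamma}\fint|u|^\nu]^\theta$. A short computation gives $\theta\overline{p}^{-1}(\overline{p}(n+2)-nP)=1$, i.e. $1/\theta=P-\omega$. Hence $(P-\nu)\theta<1$ exactly when $\nu>\omega(n,\mathbf{p})$. We then apply Young's inequality with exponents $\frac{1}{(P-\nu)\theta}$ and its conjugate $\frac{1}{1-(P-\nu)\theta}$, and observe that $\frac{\theta}{1-(P-\nu)\theta}=\frac{1}{\nu-\omega}$. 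This yields
\[
M(s)\le \tfrac12 M(r)+C\Bigl(\tfrac{\rho}{r-s}\Bigr)^{\beta}\Bigl(\tiltfiint_{Q_{2\rho}}|u|^{\nu}\Bigr)^{\frac{1}{\nu-\omega}}+1+(2\rho)^{1/P}
\]
for some $\beta=\beta(n,\mathbf{p},\nu)>0$.

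The third step is the standard iteration lemma (e.g.\ \cite[Lemma 6.1]{Giu}): if $f(s)\le\frac12 f(r)+A(r-s)^{-\beta}+B$ for all $\rho\le s<r\le2\rho$ and $f$ is bounded, then $f(\rho)\le C(\beta)(A\rho^{-\beta}+B)$. Boundedness of $f(s)=M(s)$ on $[\rho,2\rho]$ follows from Corollary \ref{cor:corol1} applied on a slightly larger cylinder, which is still compactly contained in $\Omega_T$ since $Q_{2\rho}\Subset\Omega_T$. Taking $s=\rho$ gives \eqref{eq:BOUND2-1}.

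The main obstacle is the exponent bookkeeping in the Young step. We must verify that $1/\theta=P-\omega$, so that the condition $\nu>\omega$ is exactly what makes the power of $M(r)$ strictly less than $1$. We must also verify that the powers of $\rho$ coming from $|Q_r|$ versus $|Q_{2\rho}|$ are uniformly comparable, which holds because $r\in[\rho,2\rho]$. The additive terms $1+\rho^{1/P}$ pass through the argument unchanged.
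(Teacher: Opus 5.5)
Your proposal is correct and is essentially the paper's argument. You apply \eqref{eq:sup2} between nested cylinders of the family $Q_s$, split $|u|^{P}\le M^{P-\nu}|u|^{\nu}$ on the larger cylinder, and close with Young's inequality, which works precisely because $1/\theta=P-\omega(n,\mathbf{p})$ gives $(P-\nu)\theta<1$ when $\nu>\omega(n,\mathbf{p})$; the only difference is that the paper runs the iteration by hand along the radii $\rho_j=2\rho-\rho/2^{j}$, using boundedness of $M_N$ to discard $\varepsilon^{N}M_N$, whereas you invoke the standard continuous iteration lemma, whose proof is that same discrete scheme.
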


\noindent \begin{proof}[\bfseries{Proof}]Let \textit{$\nu\in(\omega(n,\mathbf{p}),P]$}.
Observe that, in the case $\nu=P$, estimate (\ref{eq:BOUND2-1})
is a straightforward consequence of Corollary \ref{cor:corol1}. Indeed,
by choosing $\sigma=\frac{1}{2}$ and replacing $\rho$ with $2\rho$
in (\ref{eq:sup2}), one immediately obtains the bound (\ref{eq:BOUND2-1}).
Therefore, from now on we assume that $\nu<P$. Let $Q_{2\rho}(x_{0},t_{0})$
be compactly contained in $\Omega_{T}$ and define 
\[
\rho_{j}:=\,2\rho\,-\,\frac{\rho}{2^{j}}\,,\,\,\,\,\,\,\,\,\,\,\,\,\,\,\mathcal{Q}_{j}:=\,Q_{\rho_{j}}(x_{0},t_{0})\,,\,\,\,\,\,\,\,\,\,\,j\in\mathbb{N}_{0}\,.
\]
Using (\ref{eq:sup2}) with $\sigma=\frac{\rho_{j}}{\rho_{j+1}}$
and noting that $\rho_{j+1}<2\rho$, we obtain\begin{align*}
M_{j}:=\,\underset{\mathcal{Q}_{j}}{\mathrm{ess}\,\sup}\,\vert u\vert\,&\leq\,C\left(2^{j\,\frac{\mathcal{P}\,(n\,+\,\overline{p})}{\overline{p}}}\tiltfiint_{\mathcal{Q}_{j+1}}\vert u\vert^{P}\,dx\,dt\right)^{\frac{\overline{p}}{\overline{p}\,(n\,+\,2)\,-\,nP}}+\,1\,+\,\rho_{j+1}^{1/P}\\
&\leq\,C\,M_{j+1}^{\frac{\overline{p}\,(P-\nu)}{\overline{p}\,(n\,+\,2)\,-\,nP}}\left(2^{j\,\frac{\mathcal{P}\,(n\,+\,\overline{p})}{\overline{p}}}\tiltfiint_{\mathcal{Q}_{j+1}}\vert u\vert^{\nu}\,dx\,dt\right)^{\frac{\overline{p}}{\overline{p}\,(n\,+\,2)\,-\,nP}}+\,1\,+\,(2\rho)^{1/P}\,,
\end{align*}where $C$ is a positive constant depending only on $n$, $\Lambda$,
$\mathbf{p}$ and $\max\,\{\delta_{1},\ldots,\delta_{n}\}$. Since
$\omega(n,\mathbf{p})<\nu<P$, we see that the exponent of $M_{j+1}$
in the last expression belongs to the interval $(0,1)$. Thus we can
use Young's inequality with $\varepsilon>0$ to obtain
\[
M_{j}\,\leq\,\varepsilon\,M_{j+1}\,+\,c(\varepsilon)\,b^{j}\left(\tiltfiint_{Q_{2\rho}(x_{0},t_{0})}\vert u\vert^{\nu}\,dx\,dt\right)^{\frac{1}{\nu\,-\,\omega(n,\mathbf{p})}}+\,1\,+\,(2\rho)^{1/P}\,,
\]
where we have also used the fact that $\mathcal{Q}_{j+1}$ is contained
in $Q_{2\rho}(x_{0},t_{0})$. Here we emphasize that the positive
constant $c(\varepsilon)$ depends on $\varepsilon$, in addition
to $n$, $\Lambda$, $\mathbf{p}$, $\nu$ and $\max\,\{\delta_{1},\ldots,\delta_{n}\}$.
The constant $b>1$ depends on $n$, $\mathbf{p}$ and $\nu$. By
iterating the last estimate, we end up with 
\begin{equation}
M_{0}\,\leq\,\varepsilon^{N}M_{N}\,+\,c(\varepsilon)\left(\sum_{\ell\,=\,0}^{N-1}(\varepsilon b)^{\ell}\right)\left(\tiltfiint_{Q_{2\rho}(x_{0},t_{0})}\vert u\vert^{\nu}\,dx\,dt\right)^{\frac{1}{\nu\,-\,\omega(n,\mathbf{p})}}+\,[1+(2\rho)^{1/P}]\,\sum_{\ell\,=\,0}^{N-1}\varepsilon^{\ell}\,,\label{eq:itera1}
\end{equation}
for any $N\in\mathbb{N}$. Choose $\varepsilon$ so small that $\varepsilon b<1$.
Then the sums in the previous inequality converge as $N\rightarrow\infty$.
Since the sequence $\{M_{N}\}$ is bounded due to the local boundedness
of $u$, the first term on the right-hand side of (\ref{eq:itera1})
tends to zero in the limit as $N\rightarrow\infty$, which yields
the bound (\ref{eq:BOUND2-1}).\end{proof}

\subsection{The case $\overline{p}\protect\leq\frac{2n}{n+2}$ }

\noindent $\hspace*{1em}$We now turn our attention to the range 
\begin{equation}
\overline{p}\,\leq\,\frac{2n}{n+2}\,,\label{eq:sub-crit}
\end{equation}
and recall that we also require (\ref{eq:extra_integr}) in this case.
In the previous section, the condition $u\in L_{loc}^{P}(\Omega_{T})$,
combined with (\ref{eq:supercritical}), implies (\ref{eq:extra_integr}),
so this assumption was not needed. The argument consists of two parts.
First, we establish the local boundedness from above (resp. from below)
for functions \textit{$u\in\mathcal{DG}^{+}(\mathbf{p},\{\delta_{i}\},\Omega_{T},\mathcal{C})$}
(resp. \textit{$u\in\mathcal{DG}^{-}(\mathbf{p},\{\delta_{i}\},\Omega_{T},\mathcal{C})$})
without an explicit bound. Then, we follow the approach of DiBenedetto
in \cite[Chapter V, Section 10]{DiBe} to derive an explicit estimate
for the essential supremum of $u_{+}$ and $u_{-}$, respectively. 
\begin{thm}
\noindent \label{thm:main-subcrit}Let $u\in\mathcal{DG}^{+}(\mathbf{p},\{\delta_{i}\},\Omega_{T},\mathcal{C})$,
where $\mathbf{p}=(p_{1},\ldots,p_{n})$ satisfies $(\ref{eq:range})$.
Suppose that $(\ref{eq:sub-crit})$ holds and that $u$ satisfies
the extra integrability condition $(\ref{eq:extra_integr})$. Then
$u$ is locally bounded from above and, if $Q_{2\rho}(x_{0},t_{0})$
is compactly contained in $\Omega_{T}$, we have the following explicit
bound: 
\begin{equation}
\underset{Q_{\rho}(x_{0},t_{0})}{\mathrm{ess}\,\sup}\,\,u_{+}\,\leq\,C\left[\left(\tiltfiint_{Q_{2\rho}(x_{0},t_{0})}u_{+}^{m}\,dx\,dt\right)^{\frac{1}{m\,-\,\frac{n}{\overline{p}}\,(2\,-\,\overline{p})}}+1+\sqrt{\rho}\,\right],\label{eq:BOUND3-1}
\end{equation}
where $C$ is a positive constant depending only on $n$, $\Lambda$,
$\mathbf{p}$, $m$ and $\max\,\{\delta_{1},\ldots,\delta_{n}\}$.
Similarly, if $u\in\mathcal{DG}^{-}(\mathbf{p},\{\delta_{i}\},\Omega_{T},\mathcal{C})$
under assumptions $(\ref{eq:range})$, $(\ref{eq:extra_integr})$
and $(\ref{eq:sub-crit})$, then $u$ is locally bounded from below
and, if $Q_{2\rho}(x_{0},t_{0})\Subset\Omega_{T}$, we have
\begin{equation}
\underset{Q_{\rho}(x_{0},t_{0})}{\mathrm{ess}\,\sup}\,\,u_{-}\,\leq\,C\left[\left(\tiltfiint_{Q_{2\rho}(x_{0},t_{0})}u_{-}^{m}\,dx\,dt\right)^{\frac{1}{m\,-\,\frac{n}{\overline{p}}\,(2\,-\,\overline{p})}}+1+\sqrt{\rho}\,\right].\label{eq:BOUND3-2}
\end{equation}
\end{thm}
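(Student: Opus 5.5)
The argument has two parts, as the paper announces: a qualitative step showing $u_+\in L^\infty_{loc}$ (with a bound we do not track), and a quantitative step following DiBenedetto \cite[Chapter V, Section 10]{DiBe} that turns the qualitative information into \eqref{eq:BOUND3-1}.

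\textbf{Step 1: qualitative boundedness.} The obstruction is that for $\overline{p}\le 2n/(n+2)$ we have $q=\overline{p}(1+2/n)\le 2\le P$, so the De Giorgi iteration of Theorem \ref{thm:main} with $Y_j=\iint(u-k_j)_+^P$ no longer closes. I would first truncate: apply the energy estimate \eqref{eq:energy_est} to $v_h:=\min\{u,h\}$ for levels $k<h$, which is legitimate on the set $\{k<u<h\}$. Then rerun the calculation \eqref{eq:big_calculation}--\eqref{eq:clef02}, but with the iterated quantity built on $L^m$ instead of $L^P$. Concretely, I would combine the parabolic embedding with the interpolation
\[
\|w\|_{L^{r}}\le \|w\|_{L^{m}}^{\theta}\,\|w\|_{L^{q}}^{1-\theta}
\]
for a suitable $r$. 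The condition $m>\frac{n}{\overline{p}}(2-\overline{p})$ is exactly what makes the resulting recursion superlinear, i.e.\ with exponent $1+\mu$ for some $\mu>0$.

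A cleaner alternative, which I would try first, is the following. Since $\mathcal{DG}^+$ is stable under $u\mapsto u$ on smaller cylinders, I would prove a sup bound for $u_+$ on $Q_{\sigma\rho}$ in terms of $\sup_t\int (u-k)_+^2$ plus the energy, and then bound those energy terms by $\iint u_+^m$ and $\iint u_+^{P}$. These are finite by assumption, since $u\in L^P_{loc}$. The only goal here is finiteness of $\operatorname{ess\,sup}_{Q_\rho}u_+$; the constants may depend badly on $\|u\|_{L^P}$.

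\textbf{Step 2: explicit bound.} Once $u_+$ is known to be locally bounded, I would redo the iteration of Theorem \ref{thm:main} with $Y_j$ replaced by $\iint_{\mathcal{Q}_j}(u-k_j)_+^{2}$ (exponent $2$ in place of $P$). The extra powers are absorbed as $(u-k)_+^{p_i}\le M^{(p_i-2)_+}(u-k)_+^{2}+\dots$, where $M$ is the sup of $u_+$ on the larger cylinder; this is finite by Step 1. This yields an estimate of the form
\[
\sup_{Q_{\sigma\rho}}u_+\le C\Big[(1-\sigma)^{-\gamma}M^{\,\alpha}\Big(\tiltfiint_{Q_\rho}u_+^{m}\Big)^{\beta}\Big]+C(1+\sqrt{\rho}),
\]
where the bound on the extra term comes from choosing $k\ge\max\{1,\sqrt{\rho}\}$, the analogue of $k\ge\rho^{1/P}$ when the exponent is $2$. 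Here $M^{\alpha}$ arises from writing $u_+^2\le M^{2-m}u_+^m$ if $m<2$; otherwise Hölder applies directly. The threshold $m>\frac{n}{\overline{p}}(2-\overline{p})$ should be precisely the condition $\alpha<1$. Then, exactly as in the proof of Theorem \ref{thm:main2-1}, I would apply Young's inequality on the radii $\rho_j=2\rho-\rho 2^{-j}$, obtaining $M_j\le\varepsilon M_{j+1}+c(\varepsilon)b^j(\tiltfiint u_+^m)^{1/(m-\frac n{\overline p}(2-\overline p))}+C(1+\sqrt\rho)$. Iterating and using $M_N<\infty$ from Step 1 gives \eqref{eq:BOUND3-1}. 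The case $u\in\mathcal{DG}^-$ follows by the symmetric argument, applied to $u_-$ with \eqref{eq:energy_est02}.

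\textbf{Main obstacle.} I expect Step 1 to be the hardest part: closing a superlinear recursion without any a priori bound, while the non-homogeneous terms $\delta_i$ and the mixed powers $p_i\ne 2$ are present. For $p_i>2$ the terms $(u-k)_+^{p_i}$ cannot be dominated by the $L^2$/$L^m$ quantities, so one must rely on $u\in L^P_{loc}$ and on the truncation $\min\{u,h\}$. The truncation gives bounds uniform in $h$, and the result is then obtained by letting $h\to\infty$.
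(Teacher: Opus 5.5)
Your two-step architecture is right: a qualitative bound first, then DiBenedetto-type absorption via Young's inequality on the radii $2\rho-\rho2^{-j}$, as in Theorem \ref{thm:main2-1}. However, you misidentify the obstruction, and neither step contains a recursion that actually closes.

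\textbf{The obstruction and Step 1.} Under \eqref{eq:range} and \eqref{eq:sub-crit} one has $p_i<\overline{p}(1+\frac{2}{n})\le 2$ for every $i$, so $\mathcal{P}<2$ and $P=2$. The powers $p_i>2$ you worry about never occur. The truncation $\min\{u,h\}$ is therefore unnecessary, and it is also unavailable: membership in $\mathcal{DG}^{+}$ gives \eqref{eq:energy_est} only for $u$ itself, with $(u-k)_+$ rather than its truncation on the right-hand side. The real obstruction is that energy plus embedding control only $L^{q}$ with $q=\overline{p}(1+\frac{2}{n})\le 2=P$. Hence the H\"older step \eqref{eq:Yj+1} runs the wrong way. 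Your ``cleaner alternative'' (a sup bound in terms of the energy) is exactly this failing iteration, so it is circular.

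What closes Step 1 in the paper is the following. Keep $Y_j=\iint_{\mathcal{Q}_j}(u-k_j)_+^2$ as the iterated quantity, and interpolate $L^2$ between $L^q$ and a \emph{frozen} $L^m$ norm:
$Y_{j+1}\le(\iint_{\mathcal{Q}_{j+1}}(u-k_{j+1})_+^{q})^{1/\mathfrak{q}}\,I^{(\mathfrak{q}-1)/\mathfrak{q}}$, with $I=\iint_{\mathcal{Q}_0}u_+^m$ and $\mathfrak{q}=\frac{m-q}{m-2}$.
Together with \eqref{eq:big_bis} this gives $Y_{j+1}\le C\,b^{j}\,I^{(\mathfrak{q}-1)/\mathfrak{q}}\,Y_j^{(n+\overline{p})/(n\mathfrak{q})}$, where $C$ also depends on $\sigma,\rho$. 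The exponent satisfies $\frac{n+\overline{p}}{n\mathfrak{q}}>1$ if and only if $m>\frac{n}{\overline{p}}(2-\overline{p})$. The resulting threshold is independent of $k$, so one takes levels $k_j=2k-k2^{-j}$; then $Y_0=\iint(u-k)_+^2\to0$ as $k\to\infty$ by dominated convergence. Your interpolation inequality points this way. But an iterated quantity ``built on $L^m$'' cannot close, since the energy never controls $L^m$ without a sup bound. You also give no mechanism that makes $Y_0$ small.

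\textbf{Step 2.} The factor $M^{\alpha}$ is placed where it cannot arise. The factor $M^{(p_i-2)_+}$ equals $1$ because $p_i<2$. The inequality $u_+^2\le M^{2-m}u_+^m$ ``if $m<2$'' never applies, because \eqref{eq:extra_integr} and \eqref{eq:sub-crit} force $m>2$. So, as written, $\alpha=0$. Your iteration is then the one of Theorem \ref{thm:main} with $P=2$, which fails for $q<2$, and the claim that $m>\frac{n}{\overline{p}}(2-\overline{p})$ is equivalent to $\alpha<1$ has no computation behind it.

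The missing idea has two parts: use the sup bound to bridge the gap between $q$ and the iterated exponent, and use Chebyshev to produce the $k$-power. The paper iterates $Z_j=\iint_{\mathcal{Q}_j}(u-k_j)_+^m$ and uses $Z_{j+1}\le M^{m-q}\iint_{\mathcal{Q}_{j+1}}(u-k_{j+1})_+^{q}$. It then bounds $\iint_{A_{j+1}}(u-k_j)_+^2\le(k_{j+1}-k_j)^{2-m}Z_j$ and $|A_{j+1}|\le(k_{j+1}-k_j)^{-m}Z_j$. This leads to $k\ge C\,M^{\zeta}(\cdots)$ with $\zeta=\frac{n(m-q)}{(n+\overline{p})(m-2)}$, and $\zeta<1$ if and only if $m>\frac{n}{\overline{p}}(2-\overline{p})$.

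Your $L^2$ version can be repaired in the same spirit. Use $(u-k)_+^2\le M^{2-q}(u-k)_+^{q}$ in the recursion, and $Y_0\le k^{2-m}\iint u_+^m$ for levels starting at $k$. This gives $\alpha=\frac{n(2-q)}{\overline{p}(m-2)}$, and the identity $(m-2)(1-\alpha)=m-\frac{n}{\overline{p}}(2-\overline{p})$ yields the exponent in \eqref{eq:BOUND3-1}. That, however, is precisely the step your proposal is missing.
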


\noindent \begin{brem}\label{thm:Rk2}In view of Remark \ref{def:ossDG}
and Theorem \ref{thm:main-subcrit}, under assumptions (\ref{eq:coeff}),
(\ref{eq:range}), (\ref{eq:extra_integr}) and (\ref{eq:sub-crit}),
local weak subsolutions $u$ to (\ref{eq:equation}) are locally bounded
from above in $\Omega_{T}$, whereas local weak supersolutions are
locally bounded from below.\end{brem}\vspace{-2mm}

\noindent \begin{proof}[\bfseries{Proof of Theorem~\ref{thm:main-subcrit}}]After
a translation, we may assume again that $(x_{0},t_{0})=(0,0)$. By
(\ref{eq:extra_integr}) and (\ref{eq:sub-crit}) we have 
\begin{equation}
m\,>\,2n\left(\frac{1}{\overline{p}}\,-\,\frac{1}{2}\right)\geq\,2\,.\label{eq:m-bound}
\end{equation}
For the sake of readability, we divide the proof into two steps. \medskip{}

\noindent \textbf{Step 1: local boundedness.} Let us first assume
that \textit{$u\in\mathcal{DG}^{+}(\mathbf{p},\{\delta_{i}\},\Omega_{T},\mathcal{C})$.}
We choose $\sigma$, $\delta$, the cylinders $\mathcal{Q}_{j}$,
and the functions $\tilde{\zeta}_{i,j}$ and $\psi_{j}$ as in the
proof of Theorem \ref{thm:main}, and we show that $u$ is essentially
bounded from above in $Q_{\sigma\rho}$. To this end, we define 
\[
\mathfrak{q}:=\,\frac{m-2\,\overline{p}\left(\frac{1}{n}+\frac{1}{2}\right)}{m-2}\,\geq\,1\,,\,\,\,\,\,\,\,\,\,\,\,\,\,\,\,\vartheta:=\,\frac{\overline{p}\left(\frac{1}{n}+\frac{1}{2}\right)}{\mathfrak{q}}\,\in(0,1]\,.
\]
The lower bound for $\mathfrak{q}$ follows from (\ref{eq:sub-crit}),
and from this we deduce the range of $\vartheta$. Moreover, we set
\[
Y_{j}:=\iint_{\mathcal{Q}_{j}}(u-k_{j})_{+}^{2}\,dx\,dt\,,\,\,\,\,\,\,\,\,\,\,\,\,\,\,k_{j}:=\,2k\,-\,\frac{k}{2^{j}}\,,\,\,\,\,\,\,\,\,\,\,j\in\mathbb{N}_{0}\,.
\]
We require $k\geq1$ as before. If strict inequality holds in (\ref{eq:sub-crit}),
then $\mathfrak{q}>1$ and we have by Hölder's inequality\begin{align}\label{eq:sub_1}
Y_{j+1}\,&=\iint_{\mathcal{Q}_{j+1}}(u-k_{j+1})_{+}^{2\vartheta}\,(u-k_{j+1})_{+}^{2(1-\vartheta)}\,dx\,dt\nonumber\\
&\le\left(\iint_{\mathcal{Q}_{j+1}}(u-k_{j+1})_{+}^{2\vartheta \mathfrak{q}}\,dx\,dt\right)^{\frac{1}{\mathfrak{q}}}\left(\iint_{\mathcal{Q}_{j+1}}(u-k_{j+1})_{+}^{2(1-\vartheta)\,\mathfrak{q}'}\,dx\,dt\right)^{\frac{\mathfrak{q}-1}{\mathfrak{q}}}\nonumber\\
&=\left(\iint_{\mathcal{Q}_{j+1}}(u-k_{j+1})_{+}^{2\,\overline{p}\left(\frac{1}{n}+\frac{1}{2}\right)}\,dx\,dt\right)^{\frac{1}{\mathfrak{q}}}\left(\iint_{\mathcal{Q}_{j+1}}(u-k_{j+1})_{+}^{m}\,dx\,dt\right)^{\frac{\mathfrak{q}-1}{\mathfrak{q}}}\nonumber\\
&\leq\left(\iint_{\mathcal{Q}_{j+1}}(u-k_{j+1})_{+}^{2\,\overline{p}\left(\frac{1}{n}+\frac{1}{2}\right)}\,dx\,dt\right)^{\frac{1}{\mathfrak{q}}}\left(\iint_{\mathcal{Q}_{0}}u_{+}^{m}\,dx\,dt\right)^{\frac{\mathfrak{q}-1}{\mathfrak{q}}}.
\end{align}If, instead, (\ref{eq:sub-crit}) holds with equality, then $\mathfrak{q}=1$
and the previous estimate is valid without applying Hölder's inequality,
provided that we adopt the standard convention $0^{0}=1$ if 
\[
I:=\iint_{\mathcal{Q}_{0}}u_{+}^{m}\,dx\,dt\,=\,0\,.
\]
We note that inequality \eqref{eq:big_bis} is valid for the first
integral in the last line of \eqref{eq:sub_1}, despite the different
definition of $k_{j}$. The upper bound for the exponents $p_{i}$
in (\ref{eq:range}), combined with (\ref{eq:sub-crit}), implies
that $p_{i}<2$ for every $i\in\{1,\ldots,n\}$; hence $P=2$. Therefore,
arguing as in \eqref{eq:big_bis002}, but this time using that $k_{j}<2k$,
we have 
\[
\sum_{i=1}^{n}\iint_{\mathcal{Q}_{j}}(u-k_{j+1})_{+}^{p_{i}}\,dx\,dt\,\leq\,2^{\mathcal{P}+1}\,n\left[Y_{j}\,+\,(1+k^{2})\,\vert A_{j+1}\vert\right],
\]
where $A_{j+1}$ is defined as in (\ref{eq:superlevel}). Combining
the above estimate with \eqref{eq:big_bis} and recalling that $k\geq1$,
we obtain
\[
\iint_{\mathcal{Q}_{j+1}}(u-k_{j+1})_{+}^{2\,\overline{p}\left(\frac{1}{n}+\frac{1}{2}\right)}\,dx\,dt\,\leq\,\frac{C_{1}\,2^{j\,\frac{\mathcal{P}\,(n\,+\,\overline{p})}{n}}}{(1-\sigma)^{\frac{\mathcal{P}\,(n\,+\,\overline{p})}{n}}}\left[\frac{Y_{j}}{\rho}\,+\left(1+\,\frac{2k^{2}}{\rho}\right)\vert A_{j+1}\vert\,\right]^{\frac{n\,+\,\overline{p}}{n}},
\]
where $C_{1}$ is a positive constant depending only on $n$, $\Lambda$,
$\mathbf{p}$ and $\delta$. Without loss of generality, we can now
assume that $k\geq\max\,\{1,\sqrt{\rho}\}$. It then follows from
the previous inequality that\\
\[
\iint_{\mathcal{Q}_{j+1}}(u-k_{j+1})_{+}^{2\,\overline{p}\left(\frac{1}{n}+\frac{1}{2}\right)}\,dx\,dt\,\leq\,\frac{C_{1}\,2^{j\,\frac{\mathcal{P}\,(n\,+\,\overline{p})}{n}}}{\left[(1-\sigma)^{\mathcal{P}}\rho\right]^{\frac{n\,+\,\overline{p}}{n}}}\left[Y_{j}\,+\,k^{2}\,\vert A_{j+1}\vert\,\right]^{\frac{n\,+\,\overline{p}}{n}},
\]
and reasoning as in (\ref{eq:measure}) we have 
\[
\vert A_{j+1}\vert\,\leq\,\frac{4^{j+1}}{k^{2}}\,Y_{j}\,.
\]
At this point, noting that in the current parameter range one has
$\mathcal{P}<2$, we can combine the two previous estimates with \eqref{eq:sub_1}
to conclude that
\begin{equation}
Y_{j+1}\,\leq\,\frac{C_{2}\,b^{j}}{\left[(1-\sigma)^{\mathcal{P}}\rho\right]^{\frac{n\,+\,\overline{p}}{n\mathfrak{q}}}}\,\,I^{\frac{\mathfrak{q}-1}{\mathfrak{q}}}\,\,Y_{j}^{\frac{n\,+\,\overline{p}}{n\mathfrak{q}}}\,,\label{eq:recursive}
\end{equation}
where $I$ denotes the integral over $\mathcal{Q}_{0}$ appearing
in the last line of \eqref{eq:sub_1}, $b>1$ is a constant depending
only on $n$, $\mathbf{p}$ and $m$, while $C_{2}$ is a positive
constant depending only on $n$, $\Lambda$, $\mathbf{p}$, $\delta$
and $m$. Now we consider the following four cases:\\
\[
(i)\,\,\,\,\,\,I=0\,\,\,\,\mathrm{and}\,\,\,\,\mathfrak{q}>1\,;
\]
\[
(ii)\,\,\,\,I=0\,\,\,\,\mathrm{and}\,\,\,\,\mathfrak{q}=1\,;
\]
\[
(iii)\,\,\,I>0\,\,\,\,\mathrm{and}\,\,\,\,\mathfrak{q}=1\,;
\]
\[
(iv)\,\,\,I>0\,\,\,\,\mathrm{and}\,\,\,\,\mathfrak{q}>1\,.
\]
$\hspace*{1em}$In case ($i$), from the recursive inequality (\ref{eq:recursive})
we deduce 
\[
\iint_{Q_{\sigma\rho}}(u-2k)_{+}^{2}\,dx\,dt\,\le\,Y_{j+1}\,=\,0
\]
for every $j\in\mathbb{N}_{0}$, which implies that $u\leq2k\,$ a.e.
in $Q_{\sigma\rho}$.\\
$\hspace*{1em}$In cases ($ii$) and ($iii$), inequality (\ref{eq:recursive})
reduces to
\[
Y_{j+1}\,\leq\,\frac{C_{2}\,b^{j}}{\left[(1-\sigma)^{\mathcal{P}}\rho\right]^{\frac{n\,+\,\overline{p}}{n}}}\,\,Y_{j}^{1\,+\,\frac{\overline{p}}{n}}\,.
\]
Thus, the sequence $\{Y_{j}\}$ satisfies the recursive estimate of
Lemma \ref{lem:Giusti} with 
\[
C\,=\,C_{2}\left[(1-\sigma)^{\mathcal{P}}\rho\right]^{-\,\frac{n\,+\,\overline{p}}{n}},\,\,\,\,\,\,\,\,\,\,\,\,\mu\,=\,\frac{\overline{p}}{n}\,.
\]
By Lemma \ref{lem:Giusti}, the sequence $\{Y_{j}\}$ converges to
zero provided that
\[
\iint_{Q_{\rho}}(u-k)_{+}^{2}\,dx\,dt\,=\,Y_{0}\,\leq\,C^{-\,\frac{1}{\mu}}\,b^{-\,\frac{1}{\mu^{2}}}\,=\,C_{2}\left[(1-\sigma)^{\mathcal{P}}\rho\right]^{\frac{n\,+\,\overline{p}}{\overline{p}}}.
\]
The integral on the left-hand side vanishes in the limit as $k\rightarrow\infty$
by the dominated convergence theorem and the expression on the right-hand
side is independent of $k$, so the estimate must indeed hold for
large $k\geq\max\,\{1,\sqrt{\rho}\}$. Thus, we have 
\[
\iint_{Q_{\sigma\rho}}(u-2k)_{+}^{2}\,dx\,dt\,\le\,Y_{j}\rightarrow0\,\,\,\,\,\,\,\,\mathrm{as}\,\,j\to\infty\,,
\]
which implies that $u\leq2k\,$ a.e. in $Q_{\sigma\rho}$ for some
sufficiently large $k$.\\
$\hspace*{1em}$It remains to analyze case ($iv$). In this case,
(\ref{eq:sub-crit}) holds with strict inequality and, by the definition
of $\mathfrak{q}$ and the lower bound for $m$ in (\ref{eq:m-bound}),
we see that\begin{align*}
\frac{n+\overline{p}}{n\mathfrak{q}}\,&=\left(\frac{n+\overline{p}}{n}\right)\frac{m-2}{m-2\,\overline{p}\left(\frac{1}{n}+\frac{1}{2}\right)}\,=\left(\frac{n+\overline{p}}{n}\right)\left(1+\,\frac{2\,\overline{p}\left(\frac{1}{n}+\frac{1}{2}\right)-2}{m-2\,\overline{p}\left(\frac{1}{n}+\frac{1}{2}\right)}\right)\\
&>\left(\frac{n+\overline{p}}{n}\right)\left(1+\,\frac{\overline{p}\left(\frac{1}{n}+\frac{1}{2}\right)-1}{n\left(\frac{1}{\overline{p}}-\frac{1}{2}\right)-\overline{p}\left(\frac{1}{n}+\frac{1}{2}\right)}\right)=\,1\,.
\end{align*}Therefore, the sequence $\{Y_{j}\}$ satisfies the recursive estimate
of Lemma \ref{lem:Giusti} with
\[
C\,=\,C_{2}\left[(1-\sigma)^{\mathcal{P}}\rho\right]^{-\,\frac{n\,+\,\overline{p}}{n\mathfrak{q}}}\,I^{\frac{\mathfrak{q}-1}{\mathfrak{q}}},\,\,\,\,\,\,\,\,\,\,\,\,\mu\,=\,\frac{n+\overline{p}}{n\mathfrak{q}}\,-1\,>\,0\,.
\]
By Lemma \ref{lem:Giusti}, the sequence $\{Y_{j}\}$ converges to
zero provided that
\[
\iint_{Q_{\rho}}(u-k)_{+}^{2}\,dx\,dt\,=\,Y_{0}\,\leq\,C^{-\,\frac{1}{\mu}}\,b^{-\,\frac{1}{\mu^{2}}}\,=\,C_{2}\left[(1-\sigma)^{\mathcal{P}}\rho\right]^{\frac{n\,+\,\overline{p}}{n\,+\,\overline{p}\,-\,n\mathfrak{q}}}\,I^{-\,\frac{n(\mathfrak{q}-1)}{n\,+\,\overline{p}\,-\,n\mathfrak{q}}}\,.
\]
Once more, the integral on the left-hand side vanishes in the limit
as $k\rightarrow\infty$ and the expression on the right-hand side
is independent of $k$; hence the estimate must hold for large $k\geq\max\,\{1,\sqrt{\rho}\}$.
Reasoning as before, we conclude again that $u\leq2k\,$ a.e. in $Q_{\sigma\rho}$
for some sufficiently large $k$.\\
\textcolor{red}{$\hspace*{1em}$}If \textit{$u\in\mathcal{DG}^{-}(\mathbf{p},\{\delta_{i}\},\Omega_{T},\mathcal{C})$},
the same arguments, with minor modifications, give $u\geq-2k\,$ a.e.
in $Q_{\sigma\rho}$ for some $k$ large enough. We leave the details
to the reader.\\
\\
\textbf{Step 2: explicit $L^{\infty}$ bounds.} Let us again assume
that\textcolor{red}{{} }\textit{$u\in\mathcal{DG}^{+}(\mathbf{p},\{\delta_{i}\},\Omega_{T},\mathcal{C})$}.
Knowing that $u$ is locally essentially bounded from above, we now
proceed to establish the explicit bound in (\ref{eq:BOUND3-1}). By
combining (\ref{eq:sub-crit}) and (\ref{eq:m-bound}), we can introduce
a positive parameter as follows: 
\[
\nu\,:=\,m-2\,\overline{p}\left(\frac{1}{n}+\frac{1}{2}\right)>\,0\,.
\]
Let $k_{j}$ be defined again as in (\ref{eq:levels}), and set 
\[
Z_{j}:=\iint_{\mathcal{Q}_{j}}(u-k_{j})_{+}^{m}\,dx\,dt\,,\,\,\,\,\,\,\,\,\,\,j\in\mathbb{N}_{0}\,.
\]
Then we have 
\[
Z_{j+1}\,\leq\,\Vert u_{+}\Vert_{L^{\infty}(\mathcal{Q}_{0})}^{\nu}\iint_{\mathcal{Q}_{j+1}}(u-k_{j+1})_{+}^{2\,\overline{p}\left(\frac{1}{n}+\frac{1}{2}\right)}\,dx\,dt\,.
\]
Using \eqref{eq:big_bis} to estimate the last integral, we conclude
that\begin{align*}
Z_{j+1}\,\leq \,&\, C_{1}\,\frac{\Vert u_{+}\Vert_{L^{\infty}(\mathcal{Q}_{0})}^{\nu}\,2^{j\,\frac{\mathcal{P}\,(n\,+\,\overline{p})}{n}}}{(1-\sigma)^{\frac{\mathcal{P}\,(n\,+\,\overline{p})}{n}}}\nonumber\\
&\times\left[\frac{1}{\rho}\iint_{\mathcal{Q}_{j}}(u-k_{j+1})_{+}^{2}\,dx\,dt\,+\,\frac{1}{\rho}\,\sum_{i=1}^{n}\iint_{\mathcal{Q}_{j}}(u-k_{j+1})_{+}^{p_{i}}\,dx\,dt\,+\,\vert A_{j+1}\vert\,\right]^{\frac{n\,+\,\overline{p}}{n}}.
\end{align*} Exploiting the definitions of $A_{j+1}$ and $k_{j}$, applying Young's
inequality with exponents $\frac{2}{p_{i}}$, using the fact that
$k\geq1$ and $m>2$, and arguing as in (\ref{eq:measure}), we end
up with\begin{align*}
&\iint_{\mathcal{Q}_{j}}(u-k_{j+1})_{+}^{2}\,dx\,dt\,+\,\sum_{i=1}^{n}\iint_{\mathcal{Q}_{j}}(u-k_{j+1})_{+}^{p_{i}}\,dx\,dt\\
&\,\,\,\,\,\,\,\leq\iint_{A_{j+1}}(u-k_{j})_{+}^{2}\,dx\,dt\,+\,\sum_{i=1}^{n}\iint_{A_{j+1}}(u-k_{j})_{+}^{p_{i}}\,dx\,dt\\
&\,\,\,\,\,\,\,\leq\,(n+1)\left[\iint_{A_{j+1}}(u-k_{j})_{+}^{2}\,dx\,dt\,+\,\vert A_{j+1}\vert\right]\\
&\,\,\,\,\,\,\,\leq\,(n+1)\left(1+\,\frac{4^{j+1}}{k^{2}}\right)\iint_{A_{j+1}}\frac{(u-k_{j})_{+}^{m}}{(u-k_{j})_{+}^{m-2}}\,dx\,dt\\
&\,\,\,\,\,\,\,\leq\,(n+1)\left(1+4^{j+1}\right)\iint_{A_{j+1}}\frac{(u-k_{j})_{+}^{m}}{(k_{j+1}-k_{j})^{m-2}}\,dx\,dt\\
&\,\,\,\,\,\,\,\leq\,2\,(n+1)\,4^{j+1}\,\frac{2^{(j+1)(m-2)}}{k^{m-2}}\,Z_{j}\,=\,2^{m+1}(n+1)\,\frac{2^{mj}}{k^{m-2}}\,Z_{j}\,.
\end{align*}Combining the two previous estimates and recalling that $\mathcal{Q}_{0}=Q_{\rho}$,
we obtain
\begin{equation}
Z_{j+1}\,\leq\,C_{2}\,\frac{\Vert u_{+}\Vert_{L^{\infty}(Q_{\rho})}^{\nu}\,2^{j\,\frac{\mathcal{P}\,(n\,+\,\overline{p})}{n}}}{(1-\sigma)^{\frac{\mathcal{P}\,(n\,+\,\overline{p})}{n}}}\,\left[\frac{2^{mj}\,Z_{j}}{\rho\,k^{m-2}}\,+\,\vert A_{j+1}\vert\,\right]^{\frac{n\,+\,\overline{p}}{n}},\label{eq:sub_02}
\end{equation}
where $C_{2}=C_{2}(n,\Lambda,\mathbf{p},\delta,m)>0$. We now use
again the definitions of $A_{j+1}$ and $k_{j}$, together with $k\geq\max\,\{1,\sqrt{\rho}\}$,
to deduce that\begin{align}\label{eq:sub_03}
\vert A_{j+1}\vert & =\iint_{A_{j+1}}\frac{(u-k_{j})_{+}^{m}}{(u-k_{j})_{+}^{m}}\,dx\,dt\,\le\iint_{A_{j+1}}\frac{(u-k_{j})_{+}^{m}}{(k_{j+1}-k_{j})^{m}}\,dx\,dt\nonumber\\
&=\,\frac{2^{m(j+1)}}{k^{m}}\iint_{A_{j+1}}(u-k_{j})_{+}^{m}\,dx\,dt\,\leq\,2^{m}\,\frac{2^{mj}\,Z_{j}}{\rho\,k^{m-2}}\,.
\end{align}Joining (\ref{eq:sub_02}) and \eqref{eq:sub_03}, we find 
\[
Z_{j+1}\,\leq\,C_{2}\,\frac{\Vert u_{+}\Vert_{L^{\infty}(Q_{\rho})}^{\nu}\,k^{\frac{n\,+\,\overline{p}}{n}\,(2-m)}\,b^{j}}{\left[(1-\sigma)^{\mathcal{P}}\rho\right]^{\frac{n\,+\,\overline{p}}{n}}}\,\,Z_{j}^{1\,+\,\frac{\overline{p}}{n}}\,,
\]
where $b>1$ is a constant depending only on $n$, $\mathbf{p}$ and
$m$.\\
$\hspace*{1em}$Henceforth, we assume that $\Vert u_{+}\Vert_{L^{\infty}(Q_{\rho})}>0$.
In this case, from Lemma \ref{lem:Giusti} we see that $\{Z_{j}\}$
converges to zero provided that 
\[
\iint_{Q_{\rho}}u_{+}^{m}\,dx\,dt\,=\,Z_{0}\,\leq\,C_{2}\,\Vert u_{+}\Vert_{L^{\infty}(Q_{\rho})}^{-\,\frac{n\,\nu}{\overline{p}}}\,k^{\frac{n\,+\,\overline{p}}{\overline{p}}\,(m-2)}\left[(1-\sigma)^{\mathcal{P}}\rho\right]^{\frac{n\,+\,\overline{p}}{\overline{p}}},
\]
which holds if 
\begin{equation}
k\,\geq\,C_{2}\,\Vert u_{+}\Vert_{L^{\infty}(Q_{\rho})}^{\zeta}\,(1-\sigma)^{-\,\frac{\mathcal{P}}{m-2}}\left(\tiltfiint_{Q_{\rho}}u_{+}^{m}\,dx\,dt\right)^{\frac{\overline{p}}{(n\,+\,\overline{p})(m-2)}},\label{eq:k-bound}
\end{equation}
where\begin{align*}
\zeta\,&=\,\frac{n\nu}{(n+\overline{p})(m-2)}\,=\,\frac{n}{(n+\overline{p})(m-2)}\left[m-2\,\overline{p}\left(\frac{1}{n}+\frac{1}{2}\right)\right]\\
&=\,\frac{n}{n+\overline{p}}\left[1+\,\frac{2-2\,\overline{p}\left(\frac{1}{n}+\frac{1}{2}\right)}{m-2}\right].
\end{align*}If (\ref{eq:sub-crit}) holds with equality, then the numerator in
the last fractional term vanishes and $\zeta=n/(n+\overline{p})<1$.
If, instead, (\ref{eq:sub-crit}) holds with a strict inequality,
then also the last inequality in (\ref{eq:m-bound}) is strict and
we may estimate 
\[
\zeta\,<\,\frac{n}{n+\overline{p}}\left[1+\,\frac{2-2\,\overline{p}\left(\frac{1}{n}+\frac{1}{2}\right)}{2n\left(\frac{1}{\overline{p}}-\frac{1}{2}\right)-2}\right]=\,1\,.
\]
Thus, we have confirmed that in any case $\zeta\in(0,1)$. In addition,
we have shown that if (\ref{eq:k-bound}) and $k\geq\max\,\{1,\sqrt{\rho}\}$
hold, then 
\[
\iint_{Q_{\sigma\rho}}(u-k)_{+}^{m}\,dx\,dt\,\le\,Z_{j}\rightarrow0\,\,\,\,\,\,\,\,\mathrm{as}\,\,j\to\infty\,,
\]
which means that $u\leq k$ almost everywhere in $Q_{\sigma\rho}$.
Therefore, 
\[
\Vert u_{+}\Vert_{L^{\infty}(Q_{\sigma\rho})}\,\leq\,\frac{C_{2}\,\Vert u_{+}\Vert_{L^{\infty}(Q_{\rho})}^{\zeta}}{(1-\sigma)^{\frac{\mathcal{P}}{m-2}}}\left(\tiltfiint_{Q_{\rho}}u_{+}^{m}\,dx\,dt\right)^{\frac{\overline{p}}{(n\,+\,\overline{p})(m-2)}}+1+\sqrt{\rho}\,.
\]
Since $\zeta\in(0,1)$, we may use Young's inequality with $\varepsilon>0$
on the right-hand side to conclude that 
\[
\Vert u_{+}\Vert_{L^{\infty}(Q_{\sigma\rho})}\,\leq\,\varepsilon\,\Vert u_{+}\Vert_{L^{\infty}(Q_{\rho})}\,+\,\frac{c(\varepsilon)}{(1-\sigma)^{\frac{\mathcal{P}}{(m-2)(1-\zeta)}}}\left(\tiltfiint_{Q_{\rho}}u_{+}^{m}\,dx\,dt\right)^{\frac{1}{m\,-\,\frac{n}{\overline{p}}\,(2\,-\,\overline{p})}}+1+\sqrt{\rho}\,,
\]
where we may choose $\varepsilon$ arbitrarily small. Here we emphasize
that the positive constant $c(\varepsilon)$ depends on $\varepsilon$,
in addition to $n$, $\Lambda$, $\mathbf{p}$, $m$ and $\max\,\{\delta_{1},\ldots,\delta_{n}\}$.
Now an iteration similar to that performed in the proof of Theorem
\ref{thm:main2-1} leads to the following estimate:
\begin{equation}
\underset{Q_{\rho}}{\mathrm{ess}\,\sup}\,\,u_{+}\,\leq\,C\left[\left(\tiltfiint_{Q_{2\rho}}u_{+}^{m}\,dx\,dt\right)^{\frac{1}{m\,-\,\frac{n}{\overline{p}}\,(2\,-\,\overline{p})}}+1+\sqrt{\rho}\,\right],\label{eq:stima_u+}
\end{equation}
where $C$ is a positive constant depending only on $n$, $\Lambda$,
$\mathbf{p}$, $m$ and $\max\,\{\delta_{1},\ldots,\delta_{n}\}$.
Note that estimate (\ref{eq:stima_u+}) holds trivially in the case
$\Vert u_{+}\Vert_{L^{\infty}(Q_{\rho})}=0$.\\
$\hspace*{1em}$Finally, if \textit{$u\in\mathcal{DG}^{-}(\mathbf{p},\{\delta_{i}\},\Omega_{T},\mathcal{C})$},
analogous arguments yield the bound (\ref{eq:BOUND3-2}). In this
regard, we leave the straightforward details to the reader.\end{proof}

\noindent $\hspace*{1em}$We conclude this section with the following
corollary on local boundedness, which immediately follows from Definition
\ref{def:DeGiorgi}, Remark \ref{def:ossDG} and Theorem \ref{thm:main-subcrit}.
\begin{cor}
\noindent Let $u\in\mathcal{DG}(\mathbf{p},\{\delta_{i}\},\Omega_{T},\mathcal{C})$,
where $\mathbf{p}=(p_{1},\ldots,p_{n})$ satisfies $(\ref{eq:range})$.
Suppose that $(\ref{eq:sub-crit})$ holds and that $u$ satisfies
the extra integrability condition $(\ref{eq:extra_integr})$. Then
$u$ is locally bounded and, for every cylinder $Q_{2\rho}(x_{0},t_{0})\Subset\Omega_{T}$,
we have 
\begin{equation}
\underset{Q_{\rho}(x_{0},t_{0})}{\mathrm{ess}\,\sup}\,\vert u\vert\,\leq\,C\left[\left(\tiltfiint_{Q_{2\rho}(x_{0},t_{0})}\vert u\vert^{m}\,dx\,dt\right)^{\frac{1}{m\,-\,\frac{n}{\overline{p}}\,(2\,-\,\overline{p})}}+1+\sqrt{\rho}\,\right],\label{eq:estcor}
\end{equation}
where $C$ is a positive constant depending only on $n$, $\Lambda$,
$\mathbf{p}$, $m$ and $\max\,\{\delta_{1},\ldots,\delta_{n}\}$.
In particular, if $u$ is a local weak solution of $(\ref{eq:equation})$
under assumptions $(\ref{eq:coeff})$, $(\ref{eq:range})$, $(\ref{eq:extra_integr})$
and $(\ref{eq:sub-crit})$, then $u$ belongs to $L_{loc}^{\infty}(\Omega_{T})$
and satisfies the estimate $(\ref{eq:estcor})$.
\end{cor}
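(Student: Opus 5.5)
The corollary follows by combining the two one-sided bounds of Theorem \ref{thm:main-subcrit}. Since $u\in\mathcal{DG}(\mathbf{p},\{\delta_{i}\},\Omega_{T},\mathcal{C})$, Definition \ref{def:DeGiorgi} gives that $u$ satisfies both \eqref{eq:energy_est} and \eqref{eq:energy_est02} for all levels $k$ and all admissible cut-offs $\zeta$. Hence $u$ belongs simultaneously to $\mathcal{DG}^{+}(\mathbf{p},\{\delta_{i}\},\Omega_{T},\mathcal{C})$ and to $\mathcal{DG}^{-}(\mathbf{p},\{\delta_{i}\},\Omega_{T},\mathcal{C})$. The hypotheses \eqref{eq:range}, \eqref{eq:sub-crit} and \eqref{eq:extra_integr} are exactly those of Theorem \ref{thm:main-subcrit}. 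So for every $Q_{2\rho}(x_{0},t_{0})\Subset\Omega_{T}$ I would invoke \eqref{eq:BOUND3-1} for $u_{+}$ and \eqref{eq:BOUND3-2} for $u_{-}$, each with a constant depending only on $n,\Lambda,\mathbf{p},m,\max_i\delta_i$.

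Next I would use $|u|=\max\{u_{+},u_{-}\}$, so that
\[
\underset{Q_{\rho}(x_{0},t_{0})}{\mathrm{ess}\,\sup}\,|u|=\max\Big\{\underset{Q_{\rho}(x_{0},t_{0})}{\mathrm{ess}\,\sup}\,u_{+},\ \underset{Q_{\rho}(x_{0},t_{0})}{\mathrm{ess}\,\sup}\,u_{-}\Big\}.
\]
Since $u_{\pm}\le|u|$ pointwise, each mean value $\tiltfiint_{Q_{2\rho}(x_{0},t_{0})}u_{\pm}^{m}\,dx\,dt$ is at most $\tiltfiint_{Q_{2\rho}(x_{0},t_{0})}|u|^{m}\,dx\,dt$. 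The exponent $1/(m-\frac{n}{\overline{p}}(2-\overline{p}))$ is positive by \eqref{eq:extra_integr}, so the map $s\mapsto s^{1/(m-\frac{n}{\overline{p}}(2-\overline{p}))}$ is monotone and both right-hand sides are dominated by the right-hand side of \eqref{eq:estcor}. Taking $C$ to be the larger of the two constants then gives \eqref{eq:estcor}, and local boundedness follows because every point of $\Omega_{T}$ has such a cylinder around it.

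For the final assertion, Remark \ref{def:ossDG} (which rests on Proposition \ref{prop:PropEnergy} under \eqref{eq:coeff}) places every local weak solution in $\mathcal{DG}(\mathbf{p},\{\delta_{i}\},\Omega_{T},\mathcal{C})$, and the first part then applies. There is no real obstacle here. The only point needing care is that the positivity of the exponent, which makes the monotonicity step valid, comes from \eqref{eq:extra_integr}.
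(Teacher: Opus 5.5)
Your proposal is correct and follows the paper's own route: the paper also deduces the corollary immediately from Definition~\ref{def:DeGiorgi} (so $u\in\mathcal{DG}^{+}\cap\mathcal{DG}^{-}$), the two one-sided bounds \eqref{eq:BOUND3-1}--\eqref{eq:BOUND3-2} of Theorem~\ref{thm:main-subcrit}, and Remark~\ref{def:ossDG} for local weak solutions. Your extra bookkeeping spells out steps the paper leaves implicit: $u_{\pm}\le|u|$, monotonicity of the positive power guaranteed by \eqref{eq:extra_integr}, and taking the larger of the two constants.
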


\section{Critical mass lemma and semicontinuity\label{sec:DeGiorgiLemma}}

\noindent $\hspace*{1em}$In this section, we prove a measure-theoretical
maximum principle for local weak super(sub)-solutions to $(\ref{eq:equation})$
that are locally bounded from below (from above), commonly referred
to as De Giorgi-type Lemma, or Critical Mass Lemma. As a consequence
of this result, local weak super(sub)-solutions to (\ref{eq:equation})
admit a lower (upper) semicontinuous representative, provided they
are locally bounded from below (from above).\\
$\hspace*{1em}$Actually, in view of what has been established in
Sections \ref{sec:energy_estimate} and \ref{sec:boundedness}, analogous
results hold more generally for functions $u\in\mathcal{DG}^{\pm}(\mathbf{p},\{\delta_{i}\},\Omega_{T},\mathcal{C})$,
under suitable assumptions on the parameters $p_{i}$, and additionally
assuming that condition (\ref{eq:extra_integr}) holds in the case
$\overline{p}\leq2n/(n+2)$. However, for the sake of simplicity,
here we restrict ourselves to stating the aforementioned results only
for local weak super(sub)-solutions to (\ref{eq:equation}). The corresponding
statements for functions in $\mathcal{DG}^{\pm}(\mathbf{p},\{\delta_{i}\},\Omega_{T},\mathcal{C})$
can be easily deduced by the reader.\\
$\hspace*{1em}$In the sequel, we will use the cubes and cylinders
defined below, which turn out to be particularly convenient in our
anisotropic setting. For any $(y,s)\in\Omega_{T}$ and any $M,\rho>0$,
the \textit{intrinsic cubes} $\mathcal{K}_{\rho}(M)$, $[y+\mathcal{K}_{\rho}(M)]$
and the \textit{backward intrinsic cylinders} $\mathcal{Q}_{\rho}^{-}(M)$,
$(y,s)+\mathcal{Q}_{\rho}^{-}(M)$ are defined respectively by
\[
\mathcal{K}_{\rho}(M):=\,\prod_{i=1}^{n}\left\{ \vert x_{i}\vert<M^{\frac{p_{i}-2}{p_{i}}}\,\rho^{\frac{\overline{p}}{p_{i}}}\right\} ,\,\,\,\,\,\,\,\,\,\,\,\,\,\,\,\,\,\,\,\,[y+\mathcal{K}_{\rho}(M)]:=\,\prod_{i=1}^{n}\left\{ \vert x_{i}-y_{i}\vert<M^{\frac{p_{i}-2}{p_{i}}}\,\rho^{\frac{\overline{p}}{p_{i}}}\right\} 
\]
and
\[
\mathcal{Q}_{\rho}^{-}(M):=\,\mathcal{K}_{\rho}(M)\times(-\rho^{\overline{p}},0]\,,\,\,\,\,\,\,\,\,\,\,\,\,\,\,\,\,\,\,\,\,(y,s)+\mathcal{Q}_{\rho}^{-}(M):=\,[y+\mathcal{K}_{\rho}(M)]\times(s-\rho^{\overline{p}},s]\,.
\]
Throughout this section, $(y,s)\in\Omega_{T}$ and $M,\rho>0$ are
such that $(y,s)+\mathcal{Q}_{\rho}^{-}(M)\Subset\Omega_{T}$, while
$\eta\in C^{\infty}(\mathbb{R}^{n+1})$ is a smooth cut-off function
of the form 
\[
\eta(x,t):=\,\Psi(t)\,\prod_{i=1}^{n}\eta_{i}^{p_{i}}(x_{i})\,,
\]
where $\Psi\in C^{\infty}(\mathbb{R};[0,1])$ is a non-decreasing
map such that
\[
\Psi\equiv0\,\,\,\,\,\mathrm{on}\,\,\left(-\infty,s-\rho^{\overline{p}}+\frac{\epsilon}{2}\right]\,,\,\,\,\,\,\,\,\,\Psi\equiv1\,\,\,\,\,\mathrm{on}\,\,\,[s-\rho^{\overline{p}}+\epsilon,\infty)\,,\,\,\,\,\,\mathrm{with\,\,\,}\epsilon\in(0,\rho^{\overline{p}})\,,
\]
while the functions $\eta_{i}$ satisfy 
\[
\eta_{i}\,\in\,C_{0}^{\infty}(\mathbb{R};[0,1]),\,\,\,\,\,\,\,\,\mathrm{supp}\,\eta_{i}\subset\left(y_{i}-M^{\frac{p_{i}-2}{p_{i}}}\,\rho^{\frac{\overline{p}}{p_{i}}},\,y_{i}+M^{\frac{p_{i}-2}{p_{i}}}\,\rho^{\frac{\overline{p}}{p_{i}}}\right)\,\,\,\,\,\,\,\,\mathrm{for\,\,every}\,\,i\in\{1,\ldots,n\}.
\]
As an immediate consequence of these conditions, we have that
\[
\eta\equiv0\,\,\,\,\,\,\mathrm{on\,\,the\,\,parabolic\,\,boundary\,\,of\,\,}(y,s)+\mathcal{Q}_{\rho}^{-}(M)\,.
\]
The proof of the following two energy estimates is entirely analogous
to that of Proposition \ref{prop:PropEnergy}, and is therefore omitted.\vspace{0.5mm}

\begin{prop}
\noindent Let $n\geq2$ and $p_{1},\ldots,p_{n}>1$. Assume that $(\ref{eq:coeff})$
holds and that $u$ is a local weak supersolution to $(\ref{eq:equation})$
in the sense of Definition \ref{def:weaksoldef}. Then there exists
a positive constant $C$, depending only on $\Lambda$ and $\max\,\{p_{1},\ldots,p_{n}\}$,
such that for every $k\in\mathbb{R}$ we have\begin{align}\label{eq:enest_sup}
&\underset{\tau\,\in\,(s-\rho^{\overline{p}},s]}{\sup}\int_{[y\,+\,\mathcal{K}_{\rho}(M)]}(u-k)_{-}^{2}\,\eta(x,\tau)\,dx\,+\,\sum_{i=1}^{n}\iint_{(y,s)\,+\,\mathcal{Q}_{\rho}^{-}(M)}(\vert \partial_{x_{i}}u\vert-\delta_{i})_{+}^{p_{i}}\,\eta\,\mathds{1}_{\{u\,<\,k\}}\,dx\,dt\nonumber\\
&\,\,\,\,\,\,\,\leq\,C\iint_{(y,s)\,+\,\mathcal{Q}_{\rho}^{-}(M)}(u-k)_{-}^{2}\,\partial_{t}\eta\,dx\,dt\,+\,C\,\sum_{i=1}^{n}\iint_{(y,s)\,+\,\mathcal{Q}_{\rho}^{-}(M)}(u-k)_{-}^{p_{i}}\,\vert\partial_{x_{i}}\eta^{\frac{1}{p_{i}}}\vert^{p_{i}}\,dx\,dt\,.
\end{align}Similarly, if $u$ is a local weak subsolution to $(\ref{eq:equation})$,
then for every $k\in\mathbb{R}$ we have\begin{align}\label{eq:enest_subsol}
&\underset{\tau\,\in\,(s-\rho^{\overline{p}},s]}{\sup}\int_{[y\,+\,\mathcal{K}_{\rho}(M)]}(u-k)_{+}^{2}\,\eta(x,\tau)\,dx\,+\,\sum_{i=1}^{n}\iint_{(y,s)\,+\,\mathcal{Q}_{\rho}^{-}(M)}(\vert \partial_{x_{i}}u\vert-\delta_{i})_{+}^{p_{i}}\,\eta\,\mathds{1}_{\{u\,>\,k\}}\,dx\,dt\nonumber\\
&\,\,\,\,\,\,\,\leq\,C\iint_{(y,s)\,+\,\mathcal{Q}_{\rho}^{-}(M)}(u-k)_{+}^{2}\,\partial_{t}\eta\,dx\,dt\,+\,C\,\sum_{i=1}^{n}\iint_{(y,s)\,+\,\mathcal{Q}_{\rho}^{-}(M)}(u-k)_{+}^{p_{i}}\,\vert\partial_{x_{i}}\eta^{\frac{1}{p_{i}}}\vert^{p_{i}}\,dx\,dt\,.
\end{align}\vspace{0.5mm}
\end{prop}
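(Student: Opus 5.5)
The argument follows the proof of Proposition \ref{prop:PropEnergy}, with the cube $K_{\rho}(x_0)$ replaced by the intrinsic cube $[y+\mathcal{K}_{\rho}(M)]$ and the time interval replaced by $(s-\rho^{\overline{p}},s]$. We treat the supersolution estimate \eqref{eq:enest_sup} and reduce the subsolution case \eqref{eq:enest_subsol} to it. Translate so that $(y,s)=(0,0)$. We use the Steklov formulation \eqref{eq:Steklov} for supersolutions (the inequality with $\geq$). The test function is $\varphi=([u]_{h}-k)_{-}\,\eta$, which is nonnegative and admissible, since $\eta$ vanishes on the lateral boundary of $\mathcal{K}_{\rho}(M)$. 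Integrate in time over $(-\rho^{\overline{p}},\tau)$.

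Because $\partial_t[u]_h\,([u]_h-k)_- = -\tfrac12\,\partial_t([u]_h-k)_-^2$, the sign flips and the supersolution inequality becomes an upper bound of the same shape as \eqref{eq:weak1}. Integrating by parts in time uses $\eta=0$ near $t=-\rho^{\overline{p}}$. Letting $h\to0$ with Lemma \ref{lem:Stek} gives
\[
\tfrac12\int(u-k)_-^2\eta(x,\tau)\,dx-\iint\langle A(x,t,Du),D(u-k)_-\rangle\eta \le \tfrac12\iint(u-k)_-^2\partial_t\eta+\iint\langle A(x,t,Du),D\eta\rangle(u-k)_- .
\]
On $\{u<k\}$ we have $D(u-k)_-=-Du$. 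The gradient term therefore equals
\[
\sum_i\iint_{\{u<k\}}a_i(|\partial_{x_i}u|-\delta_i)_+^{p_i-1}|\partial_{x_i}u|\,\eta\;\ge\;\Lambda^{-1}\sum_i\iint(|\partial_{x_i}u|-\delta_i)_+^{p_i}\eta\,\mathds{1}_{\{u<k\}},
\]
using $|\partial_{x_i}u|\ge(|\partial_{x_i}u|-\delta_i)_+$.

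For the cross term, write $\eta=\Psi\hat\eta_i\eta_i^{p_i}$ with $\hat\eta_i=\prod_{j\neq i}\eta_j^{p_j}$. Then $|\partial_{x_i}\eta|=p_i\Psi\hat\eta_i\eta_i^{p_i-1}|\eta_i'|$ and $\Psi\hat\eta_i|\eta_i'|^{p_i}=|\partial_{x_i}\eta^{1/p_i}|^{p_i}$. The integrand is supported where $u<k$ and $A_i\neq0$, so it is bounded by
\[
\Lambda p_i\,(|\partial_{x_i}u|-\delta_i)_+^{p_i-1}\eta^{(p_i-1)/p_i}\,\bigl[(u-k)_-\,(\Psi\hat\eta_i)^{1/p_i}|\eta_i'|\bigr].
\]
Young's inequality with exponents $p_i/(p_i-1)$ and $p_i$ and a weight $\varepsilon_i$ splits this into an $\varepsilon_i$-multiple of $(|\partial_{x_i}u|-\delta_i)_+^{p_i}\eta\mathds{1}_{\{u<k\}}$ plus $C(\varepsilon_i)(u-k)_-^{p_i}|\partial_{x_i}\eta^{1/p_i}|^{p_i}$. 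Choosing $\varepsilon_i=[2(p_i-1)\Lambda^2]^{-1}$ absorbs the first part into the left-hand side. Taking the supremum over $\tau\in(-\rho^{\overline{p}},0]$ then yields \eqref{eq:enest_sup}, with $C$ depending only on $\Lambda$ and $\max p_i$. Because $\partial_t\eta\ge0$, all terms have the correct signs. For \eqref{eq:enest_subsol}, either repeat the computation with $([u]_h-k)_+\eta$, or observe that $-u$ is a supersolution and apply \eqref{eq:enest_sup} to $-u$ with level $-k$, using $(-v)_-=v_+$.

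The only delicate point is the passage $h\to0$ inside the supremum in $\tau$. As in Proposition \ref{prop:PropEnergy}, we first obtain the inequality for a.e. fixed $\tau$, using $L^2$ convergence of the Steklov averages in time slices along a subsequence. Since $\tau$ is arbitrary, we then take the supremum. The intrinsic scaling $M^{(p_i-2)/p_i}\rho^{\overline{p}/p_i}$ plays no role here; it enters only through the support of $\eta$.
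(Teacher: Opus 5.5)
Your proposal is correct and follows the paper's route. The paper omits this proof as ``entirely analogous'' to Proposition~\ref{prop:PropEnergy}, and you reproduce that argument: the same Steklov testing and time integration, the same coercivity bound, and the same Young's inequality with $\varepsilon_i=[2(p_i-1)\Lambda^2]^{-1}$. The only difference is the order: you treat the supersolution case directly with $([u]_h-k)_-\,\eta$ and obtain the subsolution case via $u\mapsto -u$, while the paper proves the subsolution estimate first and deduces the other.
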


\noindent $\hspace*{1em}$We are now in a position to prove the Critical
Mass Lemma mentioned above. We recall that local weak subsolutions
(resp. supersolutions) to (\ref{eq:equation}) are locally bounded
from above (resp. below) in $\Omega_{T}$, provided that additional
conditions are satisfied (see Remarks \ref{thm:Rk1} and \ref{thm:Rk2}).
Let us fix a cylinder $(y,s)+\mathcal{Q}_{2\rho}^{-}(M)\Subset\Omega_{T}$,
being $(y,s)\in\Omega_{T}$ and $M,\rho>0$ appropriate. Let $\mu^{+}$,
$\mu^{-}$ be such that 
\[
\mu^{-}\,\leq\,\underset{(y,s)\,+\,\mathcal{Q}_{2\rho}^{-}(M)}{\mathrm{ess}\,\inf}\,u\,\leq\,\underset{(y,s)\,+\,\mathcal{Q}_{2\rho}^{-}(M)}{\mathrm{ess}\,\sup}\,u\,\leq\,\mu^{+}\,.
\]
We also fix $a\in(0,1)$ and define 
\begin{equation}
\Gamma=\Gamma(\delta_{1},\ldots,\delta_{n},\mathbf{p}):=\,\sum_{i=i}^{n}\,\delta_{i}^{p_{i}}\,.\label{eq:Gamma}
\end{equation}

\begin{lem}[\textbf{De Giorgi-type/Critical Mass}]
\noindent \label{lem:critmass}Assume that $(\ref{eq:coeff})$ holds
and that $u$ is a local weak supersolution to $(\ref{eq:equation})$
in the sense of Definition \ref{def:weaksoldef}, locally bounded
from below. Let $\rho$, $M$, $\mu^{\pm}$, $a$ and $\Gamma$ be
as above. Then there exists $\nu^{-}\in(0,1)$, depending on the data
$\{n,\Lambda,\mathbf{p}\}$ and on the parameter $a$, but not on
the radius $\rho$ nor on $M$, such that if 
\[
\vert\{u\leq\mu^{-}+M\}\cap[(y,s)+\mathcal{Q}_{2\rho}^{-}(M)]\vert\,\leq\,\nu^{-}\,\vert\mathcal{Q}_{2\rho}^{-}(M)\vert
\]
and $M^{2}>\Gamma\,\rho^{\overline{p}}$, then 
\begin{equation}
u\,\geq\,\mu^{-}+aM\,\,\,\,\,\,\,\,\,\,\,\,\mathit{a.e}.\,\,\,\mathit{in}\,\,\,[(y,s)+\mathcal{Q}_{\rho}^{-}(M)]\,.\label{eq:tesi1}
\end{equation}
Likewise, if $u$ is a local weak subsolution to $(\ref{eq:equation})$
which is locally bounded from above, then there exists $\nu^{+}\in(0,1)$,
depending on the data $\{n,\Lambda,\mathbf{p}\}$ and on the parameter
$a$, but not on the radius $\rho$ nor on $M$, such that if
\[
\vert\{u\geq\mu^{+}-M\}\cap[(y,s)+\mathcal{Q}_{2\rho}^{-}(M)]\vert\,\leq\,\nu^{+}\,\vert\mathcal{Q}_{2\rho}^{-}(M)\vert
\]
and $M^{2}>\Gamma\,\rho^{\overline{p}}$, then 
\begin{equation}
u\,\leq\,\mu^{+}-aM\,\,\,\,\,\,\,\,\,\,\,\,\mathit{a.e}.\,\,\,\mathit{in}\,\,\,[(y,s)+\mathcal{Q}_{\rho}^{-}(M)]\,.\label{eq:tesi2}
\end{equation}
\end{lem}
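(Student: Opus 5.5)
Translating to $(y,s)=(0,0)$, I will prove only the supersolution statement; the subsolution case follows by applying it to $-u$, since $(-u)$ is a supersolution with $\mu^{+}$ replaced by $-\mu^{+}$. The proof is a De Giorgi iteration on shrinking intrinsic cylinders.

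\textbf{Setup.} For $j\in\mathbb{N}_0$ set
\[
\rho_j:=\rho+\frac{\rho}{2^{j}},\qquad \mathcal{Q}_j:=\mathcal{Q}_{\rho_j}^{-}(M),\qquad k_j:=\mu^{-}+aM+\frac{(1-a)M}{2^{j}},
\]
so that $k_0=\mu^{-}+M$ and $k_j\downarrow\mu^{-}+aM$. Let $A_j:=\{u<k_j\}\cap\mathcal{Q}_j$ and $Y_j:=|A_j|/|\mathcal{Q}_j|$.

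I take cut-offs $\eta_j$ of the product form above, equal to $1$ on $\mathcal{Q}_{j+1}$ and vanishing on $\partial_{\mathrm{par}}\mathcal{Q}_j$, with
\[
\partial_t\eta_j\le \frac{c\,2^{j\overline{p}}}{\rho^{\overline{p}}},\qquad
|\partial_{x_i}\eta_j^{1/p_i}|^{p_i}\le \frac{c\,2^{jp_i}}{M^{p_i-2}\rho^{\overline{p}}}.
\]
These are built as in the Appendix, using that the half-side of the intrinsic cube in direction $i$ is $M^{(p_i-2)/p_i}\rho^{\overline{p}/p_i}$.

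\textbf{Energy bound.} Since $\mu^{-}\le \operatorname{ess\,inf}u$, we have $(u-k_j)_-\le k_j-\mu^{-}\le M$. Apply \eqref{eq:enest_sup} with level $k_j$. Both right-hand terms are then bounded by $c\,2^{j\mathcal{P}}M^{2}\rho^{-\overline{p}}|A_j|$, because $M^{p_i}\cdot M^{2-p_i}=M^2$; this is exactly why the intrinsic scaling is chosen.

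For the gradient term I use $|\partial_{x_i}u|^{p_i}\le c\big[(|\partial_{x_i}u|-\delta_i)_+^{p_i}+\delta_i^{p_i}\big]$, which creates the extra term $c\,\Gamma|A_j|$. Under $M^2>\Gamma\rho^{\overline{p}}$ this is absorbed, since $\Gamma|A_j|\le M^2\rho^{-\overline{p}}|A_j|$.

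Hence, for $v_j:=(u-k_j)_-\,\eta_j^{1/\mathcal{P}}$ (or the appropriate directional product), the energy
\[
\mathcal{E}_j:=\sup_t\int (u-k_j)_-^2\,\eta_j\,dx+\sum_i\iint|\partial_{x_i}[(u-k_j)_-\,\eta_j]|^{p_i}\,dx\,dt
\]
satisfies $\mathcal{E}_j\le c\,2^{j\mathcal{P}}M^2\rho^{-\overline{p}}|A_j|$.

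\textbf{Parabolic embedding.} On $A_{j+1}$ we have $(u-k_j)_-\ge (1-a)M2^{-j-1}$, so
\[
\Big(\frac{(1-a)M}{2^{j+1}}\Big)^{q}|A_{j+1}|\le \iint_{\mathcal{Q}_{j+1}}(u-k_j)_-^{q},\qquad q=\overline{p}\Big(1+\frac{2}{n}\Big).
\]
The right side I bound via the Hölder–Troisi computation of \eqref{eq:big_calculation} by $c\,\big(\sup_t\int(u-k_j)_-^2\big)^{\overline{p}/n}\,\mathcal{E}_j\le c\,\mathcal{E}_j^{1+\overline{p}/n}$.

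The Troisi step needs care because the exponents differ: $\sum_i\int|\partial_i v|^{p_i}$ carries $M$-powers differing by direction. The cleanest route is to rescale to the unit cylinder by
\[
x_i=M^{(p_i-2)/p_i}\rho^{\overline{p}/p_i}\,\tilde x_i,\qquad t=\rho^{\overline{p}}\tilde t,\qquad u=M\tilde u.
\]
In these variables the energy inequality becomes $M$- and $\rho$-free, with the degeneracy term $\Gamma\rho^{\overline{p}}M^{-2}<1$. One then runs the iteration with $\tilde{\mathcal{Q}}_j$ of unit size.

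There one gets $Y_{j+1}\le C b^{j}Y_j^{1+\overline{p}/n}$ after noting $|A_j|\le|\mathcal{Q}_j|$ and the powers match. Lemma~\ref{lem:Giusti} then yields $Y_j\to0$ provided $Y_0\le\nu^{-}:=C^{-n/\overline{p}}b^{-n^2/\overline{p}^2}$, which depends only on $n,\Lambda,\mathbf{p},a$. This gives $u\ge\mu^{-}+aM$ a.e. on $\mathcal{Q}_\rho^{-}(M)$.

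\textbf{Main obstacle.} The step I expect to be most delicate is the bookkeeping showing the powers of $M$ and $\rho$ cancel exactly after the Troisi inequality. The mixed terms $M^{p_i}$ versus $M^2$ and the inhomogeneous $\delta_i$-term must both be controlled uniformly. The change of variables above is the first thing I would try, checking that $\delta_i$ rescales to contribute exactly $\Gamma\rho^{\overline{p}}/M^2$. If the rescaled $\delta_i$ terms instead carry direction-dependent powers, I would bound each $\delta_i^{p_i}\rho^{\overline{p}}M^{-2}$ separately; the hypothesis $M^2>\Gamma\rho^{\overline{p}}$ controls every summand, since each is at most $\Gamma$.
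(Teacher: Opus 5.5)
Your proposal is correct and follows essentially the paper's own argument: the same levels $k_j=\mu^{-}+aM+(1-a)M2^{-j}$, the same energy bound $\mathcal{E}_j\le C\,2^{j\mathcal{P}}M^{2}\rho^{-\overline{p}}|A_j|$ (with the $\delta_i$-term absorbed via $M^{2}>\Gamma\rho^{\overline{p}}$), the same parabolic Troisi embedding giving $\mathcal{E}_j^{1+\overline{p}/n}$, and the same appeal to Lemma \ref{lem:Giusti}. The differences are minor and all valid:
\begin{itemize}
\item You nest the cylinders as $\mathcal{Q}^{-}_{\rho_j}(M)$, so that $\mathcal{Q}_0=\mathcal{Q}^{-}_{2\rho}(M)$. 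This avoids the paper's auxiliary $\ell$ and $\gamma^{*}$.
\item You treat the subsolution case through $-u$ rather than repeating the iteration with $(u-k_j)_+$.
\item Your rescaling to the unit cylinder does correctly send $\Gamma$ to $\Gamma\rho^{\overline{p}}M^{-2}<1$, but it is unnecessary. The paper simply notes $|\mathcal{Q}_j|^{\overline{p}/n}\le c\,M^{\overline{p}-2}\rho^{\overline{p}(n+\overline{p})/n}$, and with that the powers of $M$ and $\rho$ cancel directly.
\end{itemize}
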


\noindent \begin{proof}[\bfseries{Proof}]We prove (\ref{eq:tesi1}),
since the proof of (\ref{eq:tesi2}) is analogous. Without loss of
generality, we assume $(y,s)=(0,0)$, just to simplify the notation.
Let us set, for any $j,\ell\in\mathbb{N}_{0}$, 
\[
\rho_{j}:=\,\rho\,+\,\frac{\rho}{2^{j}}\,,\,\,\,\,\,\,\,\,\,\,\mathbb{K}_{j}:=\,\prod_{i=1}^{n}\left\{ \vert x_{i}\vert<M^{\frac{p_{i}-2}{p_{i}}}\,\rho^{\frac{\overline{p}}{p_{i}}}\left(1+\,\frac{1}{2^{j+\ell}}\right)\right\} ,\,\,\,\,\,\,\,\,\,\,\mathcal{Q}_{j}:=\,\mathbb{K}_{j}\times(-\rho_{j}^{\overline{p}},0]\,.
\]
Since $\mathbb{K}_{0}\to\overline{\mathcal{K}_{\rho}(M)}$ as $\ell\to\infty$,
we fix $\ell$ such that $\mathbb{K}_{0}\subset\mathcal{K}_{2\rho}(M)$.
Notice that $\ell$ can be chosen in such a way that it depends only
on $n$ and $\mathbf{p}$. We now introduce the sequence of levels
\[
k_{j}:=\,\mu^{-}+\,aM\,+\,\frac{(1-a)\,M}{2^{j}}\,,
\]
and note that 
\begin{equation}
(u-k_{j})_{-}\,\leq\left[aM\,+\,\frac{(1-a)\,M}{2^{j}}\right]\mathds{1}_{\{u\,<\,k_{j}\}}\,\leq\,M\,\mathds{1}_{\{u\,<\,k_{j}\}}\,\,\,\,\,\,\,\,\,\,\,\,\mathrm{a.e.}\,\,\,\mathrm{in}\,\,\,\mathcal{Q}_{2\rho}^{-}(M)\,.\label{eq:M_ineq}
\end{equation}
To move forward, we choose cut-off functions $\eta_{i,j}\in C_{0}^{\infty}(\mathbb{R};[0,1])$
such that 
\[
\eta_{i,j}\equiv1\,\,\,\,\,\mathrm{in}\,\,\,\pi_{i}(\mathbb{K}_{j+1})\,,\,\,\,\,\,\,\,\,\,\,\,\,\,\,\,\eta_{i,j}\equiv0\,\,\,\,\,\mathrm{in}\,\,\,\mathbb{R}\,\backslash\,\pi_{i}(\mathbb{K}_{j})\,\,\,\,\,\,\,\,\,\,\,\,\,\,\,\mathrm{and}\,\,\,\,\,\,\,\,\,\,\,\,\,\,\,\vert\eta'_{i,j}\vert\,\leq\,\frac{c\,2^{j}}{M^{\frac{p_{i}-2}{p_{i}}}\,\rho^{\frac{\overline{p}}{p_{i}}}}\,,
\]
where $c=c(n,\mathbf{p})>0$ and $\pi_{i}:\mathbb{R}^{n}\to\mathbb{R}$,
$\pi_{i}(x)=x_{i}$, $i\in\{1,\ldots,n\}$, denotes the canonical
projection onto the $i$-th spatial coordinate. Similarly, we take
$\Psi_{j}\in C^{\infty}(\mathbb{R};[0,1])$ such that 
\[
\Psi_{j}\equiv0\,\,\,\,\,\mathrm{on}\,\,\left(-\infty,-\,\,\frac{\rho_{j}^{\overline{p}}+\rho_{j+1}^{\overline{p}}}{2}\right]\,,\,\,\,\,\,\,\,\,\,\,\,\Psi_{j}\equiv1\,\,\,\,\,\mathrm{on}\,\,\,[-\rho_{j+1}^{\overline{p}},\infty)\,\,\,\,\,\,\,\,\,\,\,\mathrm{and}\,\,\,\,\,\,\,\,\,\,\,0\,\leq\,\Psi'_{j}\,\leq\,\frac{c\,2^{(j+1)\,\overline{p}}}{\rho^{\overline{p}}}\,.
\]
It follows that the smooth cut-off function 
\[
\eta_{j}(x,t):=\,\Psi_{j}(t)\,\prod_{i=1}^{n}\eta_{i,j}^{p_{i}}(x_{i})
\]

\noindent satisfies the following properties:
\begin{equation}
\begin{cases}
\begin{array}{c}
{\displaystyle 0\leq\eta_{j}\leq1\,,\,\,\,\,\,\,\,\,\,\,\eta_{j}\equiv1\,\,\,\,\,\,\mathrm{in}\,\,\,\mathcal{Q}_{j+1}\,,\,\,\,\,\,\,\,\,\,\,\eta_{j}\equiv0\,\,\,\,\,\,\mathrm{on}\,\,\,\partial_{\mathrm{par}}\mathcal{Q}_{j}\,,\,\,\,\,\,\,\,\,\,\,\,\,\,\,\,\,\,\,\,\,\,\,\,\,\,\,\,\,\,\,\,\,\,\,\,\,\,\,\,\,\,\,\,\,\,\,\,\,\,}\vspace{4mm}\\
{\displaystyle 0\,\leq\,\partial_{t}\eta_{j}\,\leq\,\frac{c\,2^{(j+1)\,\overline{p}}}{\rho^{\overline{p}}}\,\,,\,\,\,\,\,\,\,\,\,\,\vert\partial_{x_{i}}\eta_{j}^{\frac{1}{p_{i}}}\vert^{p_{i}}\,\leq\,\frac{c\,2^{j\,\mathcal{P}}}{M^{p_{i}-2}\,\rho^{\overline{p}}}\,\,\,\,\,\,\,\,\mathrm{for\,\,every}\,\,i\in\{1,\ldots,n\}.}
\end{array}\end{cases}\label{eq:eta}
\end{equation}
The construction of this cut-off function is similar to that of $\tilde{\zeta}_{j}$
in the proof of Theorem \ref{thm:main} and is obtained by suitably
modifying the calculations carried out in the \hyperref[sec:appendice]{Appendix}.
Now we rewrite estimate \eqref{eq:enest_sup} with $\rho_{j}$, $\mathbb{K}_{j}$,
$\mathcal{Q}_{j}$, $k_{j}$ and $\eta_{j}$ in place of $\rho$,
$\mathcal{K}_{\rho}(M)$, $\mathcal{Q}_{\rho}^{-}(M)$, $k$ and $\eta,$
respectively. We thus obtain\begin{align*}
&\underset{\tau\,\in\,(-\rho_{j}^{\overline{p}},0]}{\sup}\int_{\mathbb{K}_{j}}(u-k_{j})_{-}^{2}\,\eta_{j}(x,\tau)\,dx\,+\,\sum_{i=1}^{n}\iint_{\mathcal{Q}_{j}\,\cap\,\{u\,<\,k_{j}\}}(\vert \partial_{x_{i}}u\vert-\delta_{i})_{+}^{p_{i}}\,\eta_{j}\,dx\,dt\nonumber\\
&\,\,\,\,\,\,\,\leq\,\frac{C\,2^{j\,\overline{p}}}{\rho^{\overline{p}}}\iint_{\mathcal{Q}_{j}}(u-k_{j})_{-}^{2}\,dx\,dt\,+\,\sum_{i=1}^{n}\,\frac{C\,2^{j\,\mathcal{P}}}{M^{p_{i}-2}\,\rho^{\overline{p}}}\iint_{\mathcal{Q}_{j}}(u-k_{j})_{-}^{p_{i}}\,dx\,dt\,,
\end{align*}where $C$ is a positive constant depending only on $n$, $\Lambda$
and $\mathbf{p}$. At this stage, we set 
\[
\mathcal{A}_{j}:=\,\left\{ (x,t)\in\mathcal{Q}_{j}:u(x,t)<k_{j}\right\} .
\]
Using the properties of $\eta_{j}$ in (\ref{eq:eta}) and the definition
of $\Gamma$ in (\ref{eq:Gamma}), we get\begin{align*}
&\sum_{i=1}^{n}\iint_{\mathcal{Q}_{j}}\vert\partial_{x_{i}}[\eta_{j}\,(u-k_{j})_{-}]\vert^{p_{i}}\,dx\,dt\nonumber\\
&\,\,\,\,\,\,\,\leq\,2^{\mathcal{P}-1}\left[\sum_{i=1}^{n}\iint_{\mathcal{Q}_{j}}(u-k_{j})_{-}^{p_{i}}\,\vert\partial_{x_{i}}\eta_{j}\vert^{p_{i}}\,dx\,dt\,+\,\sum_{i=1}^{n}\iint_{\mathcal{A}_{j}}\eta_{j}\,\vert \partial_{x_{i}}u\vert^{p_{i}}\,dx\,dt\right]\nonumber\\
&\,\,\,\,\,\,\,\leq\,2^{2\mathcal{P}-2}\left[\sum_{i=1}^{n}\frac{c\,2^{j\,\mathcal{P}}}{M^{p_{i}-2}\,\rho^{\overline{p}}}\iint_{\mathcal{Q}_{j}}(u-k_{j})_{-}^{p_{i}}\,dx\,dt\,+\,\sum_{i=1}^{n}\iint_{\mathcal{A}_{j}}(\vert \partial_{x_{i}}u\vert-\delta_{i})_{+}^{p_{i}}\,\eta_{j}\,dx\,dt\,+\,\vert\mathcal{A}_{j}\vert\,\Gamma\right].
\end{align*}Putting together the two previous estimates, applying (\ref{eq:M_ineq})
and recalling that $M^{2}>\Gamma\,\rho^{\overline{p}}$, we deduce
that\begin{align*}
\mathcal{E}_{j}&:=\underset{\tau\,\in\,(-\rho_{j}^{\overline{p}},0]}{\sup}\int_{\mathbb{K}_{j}}(u-k_{j})_{-}^{2}\,\eta_{j}(x,\tau)\,dx\,+\,\sum_{i=1}^{n}\iint_{\mathcal{Q}_{j}}\vert\partial_{x_{i}}[\eta_{j}\,(u-k_{j})_{-}]\vert^{p_{i}}\,dx\,dt\nonumber\\
&\leq\,\frac{C\,2^{j\,\overline{p}}}{\rho^{\overline{p}}}\iint_{\mathcal{Q}_{j}}(u-k_{j})_{-}^{2}\,dx\,dt\,+\,\sum_{i=1}^{n}\,\frac{C\,2^{j\,\mathcal{P}}}{M^{p_{i}-2}\,\rho^{\overline{p}}}\iint_{\mathcal{Q}_{j}}(u-k_{j})_{-}^{p_{i}}\,dx\,dt\,+\,C\,\vert\mathcal{A}_{j}\vert\,\Gamma\nonumber\\
&\leq\,C\,2^{j\,\mathcal{P}}\,\vert\mathcal{A}_{j}\vert\left(\frac{M^{2}}{\rho^{\overline{p}}}\,+\,\Gamma\right)\leq\,\frac{C\,2^{j\,\mathcal{P}\,+\,1}\,\vert\mathcal{A}_{j}\vert\,M^{2}}{\rho^{\overline{p}}}\,.
\end{align*}Now we combine these estimates of the energies $\mathcal{E}_{j}$
with the following embedding inequality (see \cite{DMV})\begin{align*}
&\iint_{\mathcal{Q}_{j}}\vert\eta_{j}\,(u-k_{j})_{-}\vert^{\overline{p}\left(\frac{n+2}{n}\right)}\,dx\,dt\\
&\,\,\,\,\,\,\,\leq\,c\left[\underset{\tau\,\in\,(-\rho_{j}^{\overline{p}},0]}{\sup}\int_{\mathbb{K}_{j}}(u-k_{j})_{-}^{2}\,\eta_{j}^{2}(x,\tau)\,dx\,+\,\sum_{i=1}^{n}\iint_{\mathcal{Q}_{j}}\vert\partial_{x_{i}}[\eta_{j}\,(u-k_{j})_{-}]\vert^{p_{i}}\,dx\,dt\right]^{\frac{n\,+\,\overline{p}}{n}}\leq\,c\,\mathcal{E}_{j}^{\frac{n\,+\,\overline{p}}{n}}.
\end{align*}Observing that $(u-k_{j})_{-}\geq(k_{j}-k_{j+1})=(1-a)M/2^{j+1}$
in $\mathcal{A}_{j+1}$, and recalling that $\eta_{j}=1$ in $\mathcal{Q}_{j+1}$,
we get the chain\begin{align}\label{eq:cml_1}
\left[\frac{(1-a)M}{2^{j+1}}\right]^{\overline{p}\left(\frac{n+2}{n}\right)}\vert\mathcal{A}_{j+1}\vert\,&\leq\,\iint_{\mathcal{A}_{j+1}}(u-k_{j})_{-}^{\overline{p}\left(\frac{n+2}{n}\right)}\,dx\,dt\nonumber\\
&\le\,\iint_{\mathcal{Q}_{j}}\vert\eta_{j}\,(u-k_{j})_{-}\vert^{\overline{p}\left(\frac{n+2}{n}\right)}\,dx\,dt\,\leq\,c\,\mathcal{E}_{j}^{\frac{n\,+\,\overline{p}}{n}}\nonumber\\
&\leq\,\frac{C\,2^{bj}\,M^{\frac{2\left(n\,+\,\overline{p}\right)}{n}}}{\rho^{\frac{\overline{p}\left(n\,+\,\overline{p}\right)}{n}}}\,\vert\mathcal{A}_{j}\vert^{1\,+\,\frac{\overline{p}}{n}}\,,
\end{align}where $b=b(n,\mathbf{p})>1$. At this point, we set
\[
Y_{j}:=\,\frac{\vert\mathcal{A}_{j}\vert}{\vert\mathcal{Q}_{j}\vert}
\]
and observe that $\vert\mathcal{Q}_{j}\vert^{\frac{\overline{p}}{n}}\leq c(n,\mathbf{p})\,M^{\overline{p}\,-\,2}\rho^{\frac{\overline{p}}{n}\left(n\,+\,\overline{p}\right)}$.
Dividing both sides of \eqref{eq:cml_1} by $\vert\mathcal{Q}_{j+1}\vert$,
and noticing that $\vert\mathcal{Q}_{j+1}\vert\geq2^{-\left(n\,+\,\overline{p}\right)}\,\vert\mathcal{Q}_{j}\vert$,
we obtain\begin{align}\label{eq:cml_2}
Y_{j+1}\,&\leq\,\frac{C\,2^{bj}\,M^{2\,-\,\overline{p}}}{(1-a)^{\frac{\overline{p}\left(n\,+\,2\right)}{n}}\,\rho^{\frac{\overline{p}\left(n\,+\,\overline{p}\right)}{n}}}\,\,\frac{\vert\mathcal{A}_{j}\vert^{1\,+\,\frac{\overline{p}}{n}}}{\vert\mathcal{Q}_{j}\vert}\nonumber\\
&\leq\,\frac{C\,2^{bj}}{(1-a)^{\frac{\overline{p}\left(n\,+\,2\right)}{n}}}\,Y_{j}^{1\,+\,\frac{\overline{p}}{n}}\,=:\,\gamma\,2^{bj}\,Y_{j}^{1\,+\,\frac{\overline{p}}{n}}\,,
\end{align}where $\gamma$ is a positive constant depending only on $n$, $\Lambda$,
$\mathbf{p}$ and $a$. In the previous estimate, we may assume that
$C>1$; hence, $\gamma>1$, since $a\in(0,1)$. According to Lemma
\ref{lem:Giusti}, the estimate \eqref{eq:cml_2} yields $Y_{\infty}:=\lim_{j\to\infty}Y_{j}=0$
provided that 
\begin{equation}
Y_{0}:=\,\frac{\vert\{u<\mu^{-}+M\}\cap\mathcal{Q}_{0}\vert}{\vert\mathcal{Q}_{0}\vert}\,\leq\,\gamma^{-\,\frac{n}{\overline{p}}}\,2^{-\,\frac{b\,n^{2}}{\overline{p}^{2}}}\,=:\,\nu^{*}.\label{eq:prem1}
\end{equation}
Then $Y_{\infty}=0$ implies (\ref{eq:tesi1}). It remains to prove
that (\ref{eq:prem1}) holds.\\
We set $\nu^{-}:=\nu^{*}/\gamma^{*}$, where $\nu^{*}\in(0,1)$ is
defined in (\ref{eq:prem1}) and $\gamma^{*}$ is such that $\vert\mathcal{Q}_{2\rho}^{-}(M)\vert\leq\gamma^{*}\,\vert\mathcal{Q}_{0}\vert$.
Therefore, $\gamma^{*}\ge1$ and 
\[
\vert\{u<\mu^{-}+M\}\cap\mathcal{Q}_{0}\vert\,\leq\,\vert\{u\leq\mu^{-}+M\}\cap\mathcal{Q}_{2\rho}^{-}(M)\vert\,\leq\,\nu^{-}\,\vert\mathcal{Q}_{2\rho}^{-}(M)\vert\,\leq\,\nu^{*}\,\vert\mathcal{Q}_{0}\vert\,,
\]
which ensures (\ref{eq:prem1}).\\
$\hspace*{1em}$In order to prove (\ref{eq:tesi2}), we may proceed
as above, considering the energy estimates \eqref{eq:enest_subsol}
in the same iterative geometry, but this time using the truncations
$(u-k_{j})_{+}$ with 
\[
k_{j}\,=\,\mu^{+}-\,aM\,-\,\frac{(1-a)\,M}{2^{j}}\,.
\]
This concludes the proof.\end{proof}

\noindent $\hspace*{1em}$At this point, we are ready to state the
following result.
\begin{thm}
\noindent \label{thm:semicontinuity}Assume that $(\ref{eq:coeff})$
holds and that $u$ is a local weak supersolution to $(\ref{eq:equation})$
in the sense of Definition \ref{def:weaksoldef}, locally bounded
from below. Then $u$ has a lower semicontinuous representative in
$\Omega_{T}$.\\
Likewise, if $u$ is a local weak subsolution to $(\ref{eq:equation})$
which is locally bounded from above, then $u$ has an upper semicontinuous
representative in $\Omega_{T}$.\vspace{-3mm}
\end{thm}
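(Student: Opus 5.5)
We follow the method of \cite{Liao}, adapted to the anisotropic intrinsic geometry as in \cite{CiaGuaVes}. We treat supersolutions; the subsolution case follows by the symmetric argument based on (\ref{eq:tesi2}). The candidate representative is the lower limit
\[
u_{*}(x,t):=\lim_{r\to0}\,\underset{(x,t)+\mathcal{Q}_{r}^{-}(1)}{\mathrm{ess}\,\inf}\,u\,,
\]
which is well defined because essential infima increase as $r$ decreases. In fact we will take $\liminf$ over the shifted cylinders $(y,s)+\mathcal{Q}_{r}^{-}(1)$ with $(y,s)\to(x,t)$. With that choice, $u_{*}$ is automatically lower semicontinuous by a standard argument. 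It is finite, and bounded below, because $u$ is locally bounded from below.

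It therefore remains to show that $u_{*}=u$ a.e. Let $(x_{0},t_{0})$ be a Lebesgue point of $u$ in the sense of the anisotropic backward cylinders $\mathcal{Q}_{r}^{-}(M)$. Since these cylinders form a regular (engulfing, doubling) family, almost every point is such a Lebesgue point. The inequality $u_{*}(x_{0},t_{0})\le u(x_{0},t_{0})$ follows directly from the Lebesgue point property, since an essential infimum is at most an average. For the reverse inequality, fix $\varepsilon\in(0,1)$ and set $M=\varepsilon$ and $\mu^{-}:=u(x_{0},t_{0})-2\varepsilon$. By the Lebesgue point property, $\iint_{\mathcal{Q}_{2r}^{-}(M)}(u-u(x_{0},t_{0}))_{-}\,dx\,dt/|\mathcal{Q}_{2r}^{-}(M)|\to0$ as $r\to0$, so Chebyshev's inequality makes the relative measure of $\{u\le u(x_{0},t_{0})-\varepsilon\}$ in $(x_{0},t_{0})+\mathcal{Q}_{2r}^{-}(M)$ smaller than any prescribed $\nu^{-}$ once $r$ is small.

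The main obstacle is that Lemma \ref{lem:critmass} needs $\mu^{-}$ to lie below the essential infimum of $u$ on the whole cylinder, and this fails a priori. We handle it as in \cite{Liao}, replacing $u$ by the truncation $v:=\max\{u,\mu^{-}\}$. Since $u$ is a supersolution, $v$ is again a supersolution, or at least satisfies the energy estimate (\ref{eq:enest_sup}) for levels $k>\mu^{-}$. This is immediate from the estimate itself, because $(v-k)_{-}=(u-k)_{-}\wedge(k-\mu^{-})$ and $\{v<k\}=\{u<k\}$ for $k>\mu^{-}$. Also, $\operatorname{ess\,inf}v\ge\mu^{-}$. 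We then apply the proof of Lemma \ref{lem:critmass} to $v$ with levels in $(\mu^{-},\mu^{-}+M]$. The set $\{v\le\mu^{-}+M\}$ equals $\{u\le u(x_{0},t_{0})-\varepsilon\}$, which is small. The side condition reads $M^{2}=\varepsilon^{2}>\Gamma r^{\overline{p}}$, and it holds for $r$ small at fixed $\varepsilon$; this is exactly why we send $r\to0$ before $\varepsilon\to0$.

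Lemma \ref{lem:critmass}, applied to $v$ with $a=1/2$, then gives $u\ge v\ge u(x_{0},t_{0})-\tfrac{3}{2}\varepsilon$ a.e. in $(x_{0},t_{0})+\mathcal{Q}_{r}^{-}(\varepsilon)$. The same bound holds in every backward cylinder with vertex near $(x_{0},t_{0})$, since shrinking $r$ keeps the vertex in the good region. Two points need care here. First, because the cylinders are backward, we need the Lebesgue property along slightly shifted vertices $(y,s)$; we handle this by choosing the Lebesgue point estimate uniformly for vertices in a small neighbourhood, using doubling of the cylinders. Second, the lemma's cylinder $\mathcal{Q}_{r}^{-}(\varepsilon)$ and the cylinder $\mathcal{Q}_{r}^{-}(1)$ used to define $u_{*}$ differ; since the two families are comparable at fixed $\varepsilon$, this only affects radii. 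It follows that $u_{*}(x_{0},t_{0})\ge u(x_{0},t_{0})-\tfrac{3}{2}\varepsilon$, and letting $\varepsilon\to0$ concludes the proof.
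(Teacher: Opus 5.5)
Your proposal has a genuine gap in the truncation step, which is meant to replace the lower-bound hypothesis of Lemma \ref{lem:critmass}.

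\textbf{The truncation does not work.} First, $v=\max\{u,\mu^-\}$ is not a supersolution in general: the minimum of two supersolutions is a supersolution, the maximum is not. For the heat equation ($p_i=2$, $\delta_i=0$, $a_i\equiv1$), $u=x_1$ is a solution, but for $v=\max\{x_1,0\}$ the quantity $\partial_t v-\Delta v$ is a nonpositive, nonzero measure concentrated on $\{x_1=0\}$.

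Second, the energy estimate for $v$ does not follow from the one for $u$; the inequalities run the wrong way. Since $(v-k)_-\le(u-k)_-$ and $\partial_{x_i}v=\partial_{x_i}u\,\mathds{1}_{\{u>\mu^-\}}$, the left-hand side of (\ref{eq:enest_sup}) for $v$ is bounded by the left-hand side for $u$, hence by the right-hand side for $u$. But that right-hand side is \emph{larger} than the one for $v$. The excess comes from $\{u<\mu^-\}$, where $(u-k)_-$ exceeds $k-\mu^-$ by an amount you do not control. Testing the weak formulation directly with $(v-k)_-\eta=\min\{(u-k)_-,k-\mu^-\}\eta$ leaves the term $(k-\mu^-)\iint_{\{u<\mu^-\}}|A(x,t,Du)|\,|D\eta|\,dx\,dt$, plus a time term weighted by $\mu^--u$. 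Nothing on the left controls these.

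The estimate genuinely fails. In the heat example, on a fixed cylinder and for levels $k\downarrow\mu^-=0$, the gradient term on the left is of order $k$, while the right-hand side is $O(k^2)$. So $v\notin\mathcal{DG}^-(\mathbf{p},\{\delta_i\},\Omega_T,\mathcal{C})$. The mass below $\mu^-$ is exactly what the proof of Lemma \ref{lem:critmass} excludes through (\ref{eq:M_ineq}), so that proof cannot be run on $v$. Your argument uses the local lower boundedness of $u$ only to make $u_*$ finite, whereas this is precisely the place where it is needed.

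\textbf{How to repair it.} This is what the paper does: let the hypothesis supply $\mu^-$. Fix a cylinder $N$ around $(x_0,t_0)$ and set $\mu^-:=\mathrm{ess}\,\inf_N u>-\infty$. Then every small cylinder inside $N$ satisfies the hypothesis of Lemma \ref{lem:critmass} automatically.

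The price is that $M$ can no longer be $\varepsilon$; it must be the fixed gap between $\mu^-$ and the level you want to reach. In your direct scheme: if $\mu^-\ge u(x_0,t_0)-\varepsilon$, there is nothing to prove. Otherwise take $M:=u(x_0,t_0)-\varepsilon-\mu^-$ and $a\in(0,1)$ with $(1-a)M\le\varepsilon$. The paper instead argues by contradiction from $u_*(x_0,t_0)<u(x_0,t_0)$. It chooses $r,b$ with $\mu^-\le u_*(x_0,t_0)<\mu^-+b<u(x_0,t_0)$, sets $M=b$, and takes $a$ with $\mu^-+ab>u_*(x_0,t_0)$.

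Either way $\nu^-$ depends on $a$, which is harmless. At fixed $M$, the cylinders $\mathcal{Q}^-_{2\rho}(M)$ and $\mathcal{Q}^-_{\kappa\rho}(1)$ have comparable measure, so the Lebesgue-point property makes the density of $\{u\le\mu^-+M\}$ as small as needed. Also, $M^2>\Gamma\rho^{\overline{p}}$ holds for small $\rho$ because $M$ is fixed.

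A secondary point concerns the shifted vertices. Since you define $u_*$ as an envelope over shifted vertices, you need this density bound for backward cylinders whose vertex lies slightly in the future of $t_0$. A Lebesgue point with respect to backward cylinders gives nothing there. You should instead use Lebesgue points with respect to centred cylinders, and take $N$ to be a full neighbourhood of $(x_0,t_0)$.
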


\noindent \begin{proof}[\bfseries{Proof}] We proceed in a way reminiscent
of \cite{CiaGuaVes} and \cite{Liao}. Let $u$ be a local weak supersolution
to (\ref{eq:equation}) in the sense of Definition \ref{def:weaksoldef},
locally bounded from below. Set $\mathcal{Q}_{\rho}:=\mathcal{Q}_{\rho}^{-}(1)$
for all $\rho>0$, and consider the lower semicontinuous regularization
of $u$, defined by
\[
u_{*}(x,t):=\lim_{\rho\to0^{+}}\underset{(x,t)\,+\,\mathcal{Q}_{\rho}}{\mathrm{ess}\,\inf}\,u\,\,\,\,\,\,\,\,\,\,\,\,\,\mathrm{for}\,\,(x,t)\in\Omega_{T}\,.
\]
We observe that this function is well defined at every point of $\Omega_{T}$,
since $u$ is locally bounded from below and $(x,t)+\mathcal{Q}_{\rho}\subset\Omega_{T}$
for small values of $\rho$. It is well known that $u_{*}$ is lower
semicontinuous. Therefore, showing that $u=u_{*}$ almost everywhere
in $\Omega_{T}$ yields the desired conclusion. In order to prove
this equality, we also define the set 
\[
\mathcal{L}:=\left\{ (x,t)\in\Omega_{T}:\,\vert u(x,t)\vert<\infty,\,\,\lim_{\rho\to0^{+}}\tiltfiint_{(x,t)\,+\,\mathcal{Q}_{\rho}}\vert u(x,t)-u(y,s)\vert\,dy\,ds=0\right\} .
\]
This set is well defined, since $u\in L_{loc}^{1}(\Omega_{T})$. Moreover,
\begin{equation}
\vert\mathcal{L}\vert=\vert\Omega_{T}\vert\label{eq:same_meas}
\end{equation}
(see \cite[Section 4]{CiaGuaVes}). Taking (\ref{eq:same_meas}) into
account, it is sufficient to prove $u=u_{*}$ in $\mathcal{L}$. For
all $(x,t)\in\mathcal{L}$, we have 
\[
u_{*}(x,t)\,=\,\lim_{\rho\to0^{+}}\underset{(x,t)\,+\,\mathcal{Q}_{\rho}}{\mathrm{ess}\,\inf}\,u\,\leq\,\lim_{\rho\to0^{+}}\tiltfiint_{(x,t)\,+\,\mathcal{Q}_{\rho}}u(y,s)\,dy\,ds\,=\,u(x,t)\,.
\]
To show the reverse inequality, we argue by contradiction. Let us
take $(x_{0},t_{0})\in\mathcal{L}$ and suppose that $u_{*}(x_{0},t_{0})<u(x_{0},t_{0})$.
Let $r,b>0$ be sufficiently small so that $(x_{0},t_{0})+\mathcal{Q}_{r}\subset\Omega_{T}$
and 
\[
\underset{(x_{0},t_{0})\,+\,\mathcal{Q}_{r}}{\mathrm{ess}\,\inf}\,u\,=:\,\mu_{-}\,\leq\,u_{*}(x_{0},t_{0})\,<\,\mu_{-}+b\,<\,u(x_{0},t_{0})\,.
\]
This choice is possible by the very definition of $u_{*}$, since
$u_{*}(x_{0},t_{0})$ is close to $\mu_{-}$ for small values of $r$.
Let us introduce $a\in(0,1)$ such that
\[
\mu_{-}+ab\,>\,u_{*}(x_{0},t_{0})\,,\,\,\,\,\,\,\,\,\,\,\,\,\mathrm{i.e.},\,\,\,\,\,\,\,\,\,\,\,\,\frac{u_{*}(x_{0},t_{0})-\mu_{-}}{b}\,<\,a\,<\,1\,.
\]
Now we set 
\[
\rho^{*}:=\,\inf\left\{ \left(\frac{b^{2}}{\Gamma}\right)^{\frac{1}{\overline{p}}},\,\frac{r}{2}\,,\,\min_{1\,\leq\,i\,\leq\,n}\left\{ b^{\frac{2-p_{i}}{\overline{p}}}\,\frac{r}{2}\right\} \right\} ,
\]
where $\Gamma$ is defined in (\ref{eq:Gamma}). If $\Gamma=0$ (which
occurs only when $\delta_{i}=0$ for all $i\in\{1,...,n\}$), the
quantity $(b^{2}/\Gamma)^{1/\overline{p}}$ is understood as $+\infty$.
With the above choice of $\rho^{*}$, for every $\rho\in(0,\rho^{*})$
one has 
\[
b^{2}>\Gamma\,\rho^{\overline{p}}\,\,\,\,\,\,\,\,\,\,\,\,\mathrm{and}\,\,\,\,\,\,\,\,\,\,\,\,\mathcal{Q}_{2\rho}^{-}(b)\,\subset\,\mathcal{Q}_{r}\,.
\]
Therefore, for every $\rho\in(0,\rho^{*})$ we have 
\[
\mu_{-}\,\leq\,\underset{(x_{0},t_{0})\,+\,\mathcal{Q}_{2\rho}^{-}(b)}{\mathrm{ess}\,\inf}\,u\,.
\]
$\hspace*{1em}$At this point, we prove that there exists $\nu_{a}^{-}>0$,
depending only on $a$, $n$, $\Lambda$, $\mathbf{p}$, such that
for some $\tilde{\rho}\in(0,\rho^{*})$ one has 
\begin{equation}
\vert\{u\leq\mu_{-}+b\}\cap[(x_{0},t_{0})+\mathcal{Q}_{2\tilde{\rho}}^{-}(b)]\vert\,\leq\,\nu_{a}^{-}\,\vert\mathcal{Q}_{2\tilde{\rho}}^{-}(b)\vert\,.\label{eq:meas_cond}
\end{equation}
If this were not the case, then for every $\rho\in(0,\rho^{*})$ we
would have\begin{align*}
\iint_{(x_{0},t_{0})\,+\,\mathcal{Q}_{2\rho}^{-}(b)}\vert u(x_{0},t_{0})-u(x,t)\vert\,dx\,dt\,&\geq\,\iint_{\{u\,\leq\,\mu_{-}+\,b\}\,\cap\,[(x_{0},t_{0})\,+\,\mathcal{Q}_{2\rho}^{-}(b)]}[u(x_{0},t_{0})-(\mu_{-}+b)]\,dx\,dt\nonumber\\
&\geq\,\nu_{a}^{-}\,[u(x_{0},t_{0})-(\mu_{-}+b)]\,\vert\mathcal{Q}_{2\rho}^{-}(b)\vert\,,
\end{align*} which, after dividing by $\vert\mathcal{Q}_{2\rho}^{-}(b)\vert$,
yields
\begin{equation}
\tiltfiint_{(x_{0},t_{0})\,+\,\mathcal{Q}_{2\rho}^{-}(b)}\vert u(x_{0},t_{0})-u(x,t)\vert\,dx\,dt\,\geq\,\nu_{a}^{-}\,[u(x_{0},t_{0})-(\mu_{-}+b)]\,>0\,,\label{eq:assurdo}
\end{equation}
for all $\rho\in(0,\rho^{*})$. Setting
\[
\kappa=\kappa(n,\mathbf{p},b):=\,\max\left\{ 2,\max_{1\,\leq\,i\,\leq\,n}\left\{ 2\,b^{\frac{p_{i}-2}{\overline{p}}}\right\} \right\} ,
\]
it is easy to verify that $\mathcal{Q}_{2\rho}^{-}(b)\subseteq\mathcal{Q}_{\kappa\rho}$,
and hence
\begin{equation}
\tiltfiint_{(x_{0},t_{0})\,+\,\mathcal{Q}_{2\rho}^{-}(b)}\vert u(x_{0},t_{0})-u\vert\,dx\,dt\,\leq\,\frac{\vert\mathcal{Q}_{\kappa\rho}\vert}{\vert\mathcal{Q}_{2\rho}^{-}(b)\vert}\,\tiltfiint_{(x_{0},t_{0})\,+\,\mathcal{Q}_{\kappa\rho}}\vert u(x_{0},t_{0})-u\vert\,dx\,dt\,.\label{eq:chiave}
\end{equation}
Observe that 
\[
\frac{\vert\mathcal{Q}_{\kappa\rho}\vert}{\vert\mathcal{Q}_{2\rho}^{-}(b)\vert}\,=\,\max\left\{ 1,\underset{1\,\leq\,i\,\leq\,n}{\max}\left\{ b^{\frac{\left(p_{i}-2\right)\left(n+\overline{p}\right)}{\overline{p}}}\right\} \right\} \,\prod_{j=1}^{n}b^{\frac{2\,-\,p_{j}}{p_{j}}}\,,
\]
i.e., this ratio depends only on $n$, $\mathbf{p}$, $b$, but not
on the radius $\rho$. Then, from (\ref{eq:chiave}) and the fact
that $(x_{0},t_{0})\in\mathcal{L}$, it follows that 
\[
\lim_{\rho\to0^{+}}\tiltfiint_{(x_{0},t_{0})\,+\,\mathcal{Q}_{2\rho}^{-}(b)}\vert u(x_{0},t_{0})-u(x,t)\vert\,dx\,dt\,=0\,,
\]
which contradicts (\ref{eq:assurdo}). Therefore, condition (\ref{eq:meas_cond})
holds for some $\tilde{\rho}\in(0,\rho^{*})$, and we may apply Lemma
\ref{lem:critmass} to deduce that
\[
u(x,t)\,\geq\,\mu_{-}+ab\,>\,u_{*}(x_{0},t_{0})\,,\,\,\,\,\,\,\,\,\,\,\,\,\mathrm{for}\,\,\mathit{\mathrm{a.e}}.\,\,(x,t)\in[(x_{0},t_{0})+\mathcal{Q}_{\tilde{\rho}}^{-}(b)]\,.
\]
This however contradicts the definition of $u_{*}(x_{0},t_{0})$,
since
\[
u_{*}(x_{0},t_{0})\,=\,\lim_{\rho\to0^{+}}\underset{(x_{0},t_{0})\,+\,\mathcal{Q}_{\rho}}{\mathrm{ess}\,\inf}\,u\,\geq\,\underset{(x_{0},t_{0})\,+\,\mathcal{Q}_{\tilde{\rho}}^{-}(b)}{\mathrm{ess}\,\inf}\,u\,>\,u_{*}(x_{0},t_{0})\,.
\]
As a result, we must have $u_{*}(x,t)\geq u(x,t)$ for all $(x,t)\in\mathcal{L}$.\\
$\hspace*{1em}$Finally, the second part of the statement follows
from the observation that if $u$ is a local weak subsolution to (\ref{eq:equation}),
then $-u$ is a local weak supersolution, together with the fact that
the additive inverse of a lower semicontinuous function is upper semicontinuous.\end{proof}

\noindent \begin{brem}Theorem \ref{thm:semicontinuity} implies that
the corresponding semicontinuous representatives of $u$ actually
satisfy the pointwise bound (\ref{eq:tesi1}) or (\ref{eq:tesi2})
for every point in the intrinsic cylinder $(y,s)+\mathcal{Q}_{\rho}^{-}(M)$.\end{brem}

\noindent \begin{brem}Let $u$ be a local weak solution to (\ref{eq:equation})
in the sense of Definition \ref{def:locweaksol2}, and assume that
$u$ is locally bounded in $\Omega_{T}$. By Theorem \ref{thm:semicontinuity},
$u$ admits a lower semicontinuous representative $u_{*}$ and an
upper semicontinuous representative $\tilde{u}$. However, this result
does not, in general, imply that $u$ is continuous on $\Omega_{T}$.
Indeed, one cannot exclude that 
\[
u_{*}(x,t)\,<\,\tilde{u}(x,t)
\]

\noindent on a nonempty set of Lebesgue measure zero. Therefore, continuity
of $u$ cannot be deduced solely from the existence of these semicontinuous
representatives. In this regard, we aim to address the continuity
of local weak solutions in the case $p_{1}=\cdots=p_{n}$ in future
work.\end{brem}

\noindent \appendix
\section{Appendix}\label{sec:appendice}$\hspace*{1em}$In what follows, we first show how to explicitly construct
a cut-off function $\zeta$ of the form (\ref{eq:cut-off}), satisfying
(\ref{eq:psi}) and (\ref{eq:zeta_i}) with $(x_{0},t_{0})=(0,0)$.
Then, we construct a smooth cut-off function $\tilde{\zeta}_{j}$
having the properties stated in (\ref{eq:properties}).\medskip{}

\noindent \textbf{Construction of the cut-off function $\zeta$.}
Let $\chi\in C^{\infty}(\mathbb{R})$ be a non-decreasing function
such that 
\[
0\leq\chi\leq1\,,\,\,\,\,\,\,\,\,\chi\equiv0\,\,\,\,\,\mathrm{on}\,\,\,(-\infty,0]\,,\,\,\,\,\,\,\,\,\chi\equiv1\,\,\,\,\,\mathrm{on}\,\,\,[1,\infty)\,.
\]
For example, one can take 
\[
\chi(t)\,=\,\frac{Z(t)}{Z(t)+Z(1-t)}\,,\,\,\,\,\,\,\,\,\mathrm{with}\,\,\,\,\,Z(t):=\begin{cases}
\begin{array}{cc}
{\displaystyle e^{-\,\frac{1}{t}}} & \mathrm{if}\,\,t>0,\\
0 & \mathrm{if}\,\,t\le0.
\end{array}\end{cases}
\]
As an immediate consequence of Lagrange's mean value theorem, we have
that $\Vert\chi'\Vert_{\infty}\geq1$.

\noindent For $i\in\{1,\ldots,n\}$ and $\rho\in(0,\infty)$, we choose
three positive numbers $a_{i}$, $b_{i}$ and $\epsilon$ such that
\[
0<b_{i}<a_{i}<\rho^{\frac{1}{p_{i}}}\,\,\,\,\,\,\,\,\,\,\,\,\mathrm{and}\,\,\,\,\,\,\,\,\,\,\,\epsilon\in(0,\rho)\,.
\]
Furthermore, we define 
\[
\psi(t):=\,\chi\left(\frac{2t+2\rho-\epsilon}{\epsilon}\right),\,\,\,\,\,\,\,\,t\in\mathbb{R}\,,
\]
and 
\[
\zeta_{i}(s):=\,\chi\left(\frac{s+a_{i}}{a_{i}-b_{i}}\right)\,\chi\left(\frac{a_{i}-s}{a_{i}-b_{i}}\right),\,\,\,\,\,\,\,\,s\in\mathbb{R}\,.
\]
\\
One can easily check that $\psi\in C^{\infty}(\mathbb{R};[0,1])$
is a non-decreasing map such that
\[
\psi\equiv0\,\,\,\,\,\mathrm{on}\,\,\left(-\infty,-\rho+\frac{\epsilon}{2}\right]\,,\,\,\,\,\,\,\,\,\psi\equiv1\,\,\,\,\,\mathrm{on}\,\,\,[-\rho+\epsilon,\infty)\,,
\]
while the functions $\zeta_{i}$ satisfy 
\[
\zeta_{i}\equiv1\,\,\,\,\,\mathrm{on}\,\,\,[-b_{i},b_{i}]\,,\,\,\,\,\,\,\,\,\,\,\zeta_{i}\equiv0\,\,\,\,\,\mathrm{on}\,\,\,\mathbb{R}\,\backslash\,(-a_{i},a_{i})\,.
\]
Therefore, 
\[
\zeta_{i}\,\in\,C_{0}^{\infty}(\mathbb{R};[0,1])\,\,\,\,\,\,\,\,\,\,\mathrm{and}\,\,\,\,\,\,\,\,\,\,\mathrm{supp}\,\zeta_{i}\subset(-\rho^{\frac{1}{p_{i}}},\rho^{\frac{1}{p_{i}}})\,\,\,\,\,\,\,\,\mathrm{for\,\,every}\,\,i\in\{1,\ldots,n\}.
\]
$\hspace*{1em}$Now we consider the smooth cut-off function
\[
\zeta(x,t):=\,\psi(t)\,\prod_{i=1}^{n}\zeta_{i}^{p_{i}}(x_{i})\,,\,\,\,\,\,\,\,\,\,\,\,\,(x,t)\in\mathbb{R}^{n+1}.
\]
By the properties of $\psi$ and $\zeta_{1},\ldots,\zeta_{n}$, we
immediately deduce that 
\[
0\leq\zeta\leq1\,,\,\,\,\,\,\,\,\,\,\,\zeta\equiv0\,\,\,\,\,\,\mathrm{on}\,\,\,\partial_{\mathrm{par}}Q_{\rho}\,,\,\,\,\,\,\,\,\,\,\,\zeta\equiv1\,\,\,\,\,\,\mathrm{in}\,\,\,\prod_{i=1}^{n}(-b_{i},b_{i})\times(-\rho+\epsilon,0)\,.
\]
\\
\textbf{Construction of the cut-off function $\tilde{\zeta}_{j}$.}
For $i\in\{1,\ldots,n\}$ and a fixed $j\in\mathbb{N}_{0}$, we set
\[
a_{i,j}:=\,(\tilde{\rho}_{j})^{\frac{1}{p_{i}}}\,,\,\,\,\,\,\,\,\,\,\,\,\,b_{i,j}:=\,\rho_{j+1}^{\frac{1}{p_{i}}}\,,
\]
where $\rho_{j+1}$ and $\tilde{\rho}_{j}$ are defined in (\ref{eq:radii})
and (\ref{eq:radii2}), respectively. Thus we have $0<b_{i,j}<a_{i,j}<\rho_{j}^{\frac{1}{p_{i}}}$.
Moreover, we define 
\begin{equation}
\psi_{j}(t):=\,\chi\left(\frac{t+\tilde{\rho}_{j}}{\tilde{\rho}_{j}-\rho_{j+1}}\right),\,\,\,\,\,\,\,\,t\in\mathbb{R}\,,\label{eq:psij}
\end{equation}
and 
\begin{equation}
\tilde{\zeta}_{i,j}(s):=\,\chi\left(\frac{s+a_{i,j}}{a_{i,j}-b_{i,j}}\right)\,\chi\left(\frac{a_{i,j}-s}{a_{i,j}-b_{i,j}}\right),\,\,\,\,\,\,\,\,s\in\mathbb{R}\,.\label{eq:zetai_espl}
\end{equation}
\\
Then, $\psi_{j}\in C^{\infty}(\mathbb{R};[0,1])$ is a non-decreasing
map such that 
\[
\psi_{j}\equiv0\,\,\,\,\,\mathrm{on}\,\,(-\infty,-\tilde{\rho}_{j}]\,,\,\,\,\,\,\,\,\,\psi_{j}\equiv1\,\,\,\,\,\mathrm{on}\,\,\,[-\rho_{j+1},\infty)\,,
\]
while the functions $\tilde{\zeta}_{i,j}$ satisfy 
\[
\tilde{\zeta}_{i,j}\equiv1\,\,\,\,\,\mathrm{on}\,\,\,[-b_{i,j},b_{i,j}]\,,\,\,\,\,\,\,\,\,\,\,\tilde{\zeta}_{i,j}\equiv0\,\,\,\,\,\mathrm{on}\,\,\,\mathbb{R}\,\backslash\,(-a_{i,j},a_{i,j})\,.
\]
Therefore, 
\[
\tilde{\zeta}_{i,j}\,\in\,C_{0}^{\infty}(\mathbb{R};[0,1])\,\,\,\,\,\,\,\,\,\,\mathrm{and}\,\,\,\,\,\,\,\,\,\,\mathrm{supp}\,\tilde{\zeta}_{i,j}\subset(-\rho_{j}^{\frac{1}{p_{i}}},\rho_{j}^{\frac{1}{p_{i}}})\,\,\,\,\,\,\,\,\mathrm{for\,\,every}\,\,i\in\{1,\ldots,n\}.
\]
$\hspace*{1em}$Now we consider the smooth cut-off function
\[
\tilde{\zeta}_{j}(x,t):=\,\psi_{j}(t)\,\prod_{i=1}^{n}\tilde{\zeta}_{i,j}^{p_{i}}(x_{i})\,,\,\,\,\,\,\,\,\,\,\,\,\,(x,t)\in\mathbb{R}^{n+1}.
\]
By the properties of $\psi_{j}$ and $\tilde{\zeta}_{1,j},\ldots,\tilde{\zeta}_{n,j}$,
we immediately have that 
\[
0\leq\tilde{\zeta}_{j}\leq1\,,\,\,\,\,\,\,\,\,\,\,\tilde{\zeta}_{j}\equiv1\,\,\,\,\,\,\mathrm{in}\,\,\,\mathcal{Q}_{j+1}\,,\,\,\,\,\,\,\,\,\,\,\tilde{\zeta}_{j}\equiv0\,\,\,\,\,\,\mathrm{on}\,\,\,\partial_{\mathrm{par}}\mathcal{Q}_{j}\,,
\]
where the cylinders $\mathcal{Q}_{j}$ and $\mathcal{Q}_{j+1}$ are
defined as in the proof of Theorem \ref{thm:main}. Furthermore, by
(\ref{eq:psij}) and the definitions of $\rho_{j+1}$, $\tilde{\rho}_{j}$,
we have 
\[
\psi_{j}'(t)=\,\frac{1}{\tilde{\rho}_{j}-\rho_{j+1}}\,\,\chi'\left(\frac{t+\tilde{\rho}_{j}}{\tilde{\rho}_{j}-\rho_{j+1}}\right)=\,\frac{2^{j+2}}{(1-\sigma)\,\rho}\,\,\chi'\left(\frac{t+\tilde{\rho}_{j}}{\tilde{\rho}_{j}-\rho_{j+1}}\right),
\]
and therefore 
\begin{equation}
0\,\leq\,\partial_{t}\tilde{\zeta}_{j}(x,t)\,=\,\psi_{j}'(t)\,\prod_{i=1}^{n}\tilde{\zeta}_{i,j}^{p_{i}}(x_{i})\,\leq\,\frac{2^{j+2}\,\Vert\chi'\Vert_{\infty}}{(1-\sigma)\,\rho}\,\,\,\,\,\,\,\,\,\,\,\,\mathrm{for\,\,every\,\,}(x,t)\in\mathbb{R}^{n+1}.\label{eq:DtZetaj}
\end{equation}
Now, using (\ref{eq:zetai_espl}), the definitions of $a_{i,j}$ and
$b_{i,j}$, and the fact that $0\leq\chi\leq1$, we obtain, for every
$s\in\mathbb{R}$,\begin{align*}
\vert\tilde{\zeta}'_{i,j}(s)\vert\,&=\,\frac{1}{a_{i,j}-b_{i,j}}\left|\chi'\left(\frac{s+a_{i,j}}{a_{i,j}-b_{i,j}}\right)\,\chi\left(\frac{a_{i,j}-s}{a_{i,j}-b_{i,j}}\right)-\chi\left(\frac{s+a_{i,j}}{a_{i,j}-b_{i,j}}\right)\,\chi'\left(\frac{a_{i,j}-s}{a_{i,j}-b_{i,j}}\right)\right|\nonumber\\
&\leq\,\frac{2\,\Vert\chi'\Vert_{\infty}}{(\tilde{\rho}_{j})^{\frac{1}{p_{i}}}-\rho_{j+1}^{\frac{1}{p_{i}}}}\,.
\end{align*} Then, for every $(x,t)\in\mathbb{R}^{n+1}$, we get 
\begin{equation}
\vert\partial_{x_{i}}\tilde{\zeta}_{j}^{\frac{1}{p_{i}}}(x,t)\vert^{p_{i}}\,=\,\psi_{j}(t)\,\vert\tilde{\zeta}'_{i,j}(x_{i})\vert^{p_{i}}\,\prod_{\ell\neq i}\tilde{\zeta}_{\ell,j}^{p_{\ell}}(x_{\ell})\,\le\left[\frac{2\,\Vert\chi'\Vert_{\infty}}{(\tilde{\rho}_{j})^{\frac{1}{p_{i}}}-\rho_{j+1}^{\frac{1}{p_{i}}}}\right]^{p_{i}}.\label{eq:DiZeta}
\end{equation}
At this point, we consider the function $g_{i}:[0,\infty)\rightarrow[0,\infty)$
defined by 
\[
g_{i}(s):=\,s^{\frac{1}{p_{i}}},\,\,\,\,\,\,\,\,\,\,\,\,i\in\{1,\ldots,n\}.
\]
By Lagrange's mean value theorem, there exists some $\kappa_{i,j}\in(\rho_{j+1},\tilde{\rho}_{j})$
such that 
\begin{equation}
(\tilde{\rho}_{j})^{\frac{1}{p_{i}}}-\rho_{j+1}^{\frac{1}{p_{i}}}\,=\,g{}_{i}(\tilde{\rho}_{j})-g_{i}(\rho_{j+1})\,=\,\frac{\tilde{\rho}_{j}-\rho_{j+1}}{p_{i}}\,\kappa_{i,j}^{\frac{1}{p_{i}}\,-\,1},\label{eq:Lagrange}
\end{equation}
Joining (\ref{eq:DiZeta}) and (\ref{eq:Lagrange}), using the definitions
of $\rho_{j+1}$ and $\tilde{\rho}_{j}$, together with the inequalities
$\kappa_{i,j}<\tilde{\rho}_{j}<\rho$ and $p_{i}>1$, and recalling
that $\Vert\chi'\Vert_{\infty}\geq1$, $\sigma\in(0,1)$ and $\mathcal{P}:=\max\,\{p_{1},\ldots,p_{n}\}$,
we obtain 
\begin{equation}
\vert\partial_{x_{i}}\tilde{\zeta}_{j}^{\frac{1}{p_{i}}}\vert^{p_{i}}\,\leq\,\frac{(2\,p_{i}\,\Vert\chi'\Vert_{\infty})^{p_{i}}\,\kappa_{i,j}^{p_{i}-1}}{(\tilde{\rho}_{j}-\rho_{j+1})^{p_{i}}}\,\le\,\frac{(2^{j+3}\,\mathcal{P}\,\Vert\chi'\Vert_{\infty})^{\mathcal{P}}\,\rho^{p_{i}-1}}{(1-\sigma)^{p_{i}}\,\rho^{p_{i}}}\,\leq\,\frac{(2^{j+3}\,\mathcal{P}\,\Vert\chi'\Vert_{\infty})^{\mathcal{P}}}{(1-\sigma)^{\mathcal{P}}\rho}\,.\label{eq:DiZetaj2}
\end{equation}
Finally, taking $c=(8\,\mathcal{P}\,\Vert\chi'\Vert_{\infty})^{\mathcal{P}}$,
from (\ref{eq:DtZetaj}) and (\ref{eq:DiZetaj2}) we get
\[
0\,\leq\,\partial_{t}\tilde{\zeta}_{j}\,\leq\,\frac{c\,2^{j}}{(1-\sigma)\,\rho}\,\,\,\,\,\,\,\,\,\,\,\,\,\,\,\mathrm{and}\,\,\,\,\,\,\,\,\,\,\,\,\,\,\,\vert\partial_{x_{i}}\tilde{\zeta}_{j}^{\frac{1}{p_{i}}}\vert^{p_{i}}\,\leq\,\frac{c\,2^{j\mathcal{P}}}{(1-\sigma)^{\mathcal{P}}\rho}\,\,\,\,\,\,\,\,\mathrm{for\,\,every}\,\,i\in\{1,\ldots,n\}.
\]

\begin{singlespace}
\noindent \medskip{}

\noindent \textbf{Acknowledgments. }The authors are members of the
Gruppo Nazionale per l'Analisi Matematica, la Probabilità e le loro
Applicazioni (GNAMPA) of the Istituto Nazionale di Alta Matematica
(INdAM). P. Ambrosio and G. Cupini have been partially supported through
the INdAM--GNAMPA 2026 Project ``Esistenza e regolarità per soluzioni
di equazioni ellittiche e paraboliche anisotrope'' (CUP E53C25002010001).
P. Ambrosio also acknowledges financial support from the IADE\_CITTI\_2020
Project ``Intersectorial applications of differential equations''
(CUP J34I20000980006). S. Ciani has been partially supported through
the INdAM--GNAMPA 2026 Project ``Classificazione delle soluzioni
di equazioni evolutive non locali'' (CUP E53C25002010001).\bigskip{}

\noindent \textbf{Declarations.} On behalf of all authors, the corresponding
author states that there is no conflict of interest.\bigskip{}

\noindent \textbf{Data availability.} This manuscript has no associated
data.\addcontentsline{toc}{section}{References}
\end{singlespace}

\begin{singlespace}

\lyxaddress{\noindent \textbf{$\quad$}\\
$\hspace*{1em}$\textbf{Pasquale Ambrosio} \\
Dipartimento di Matematica, Università di Bologna\\
Piazza di Porta S. Donato 5, 40126 Bologna, Italy.\\
\textit{E-mail address}: pasquale.ambrosio@unibo.it}

\lyxaddress{\noindent $\hspace*{1em}$\textbf{Simone Ciani}\\
Dipartimento di Matematica, Università di Bologna\\
Piazza di Porta S. Donato 5, 40126 Bologna, Italy.\\
\textit{E-mail address}: simone.ciani3@unibo.it}

\lyxaddress{\noindent $\hspace*{1em}$\textbf{Giovanni Cupini}\\
Dipartimento di Matematica, Università di Bologna\\
Piazza di Porta S. Donato 5, 40126 Bologna, Italy.\\
\textit{E-mail address}: giovanni.cupini@unibo.it }
\end{singlespace}

\end{document}